\DeclareTextSymbolDefault{\textquotedbl}{T1}
\DeclareRobustCommand{\lyxsout}[1]{\ifx\\#1\else\sout{#1}\fi}
\numberwithin{equation}{section}
\numberwithin{figure}{section}
\theoremstyle{plain}
\newtheorem{thm}{\protect\theoremname}
\theoremstyle{definition}
\newtheorem{defn}[thm]{\protect\definitionname}
\theoremstyle{remark}
\newtheorem{rem}[thm]{\protect\remarkname}
\theoremstyle{plain}
\newtheorem{lem}[thm]{\protect\lemmaname}
\theoremstyle{definition}
\newtheorem{example}[thm]{\protect\examplename}
\theoremstyle{plain}
\newtheorem{cor}[thm]{\protect\corollaryname}
\setlist[enumerate]{leftmargin=*,label=(\roman*),align=left}
\newcommand{\xyR}[1]{ \makeatletter
\xydef@\xymatrixrowsep@{#1} \makeatother} 
\newcommand{\xyC}[1]{ \makeatletter
\xydef@\xymatrixcolsep@{#1} \makeatother} 
\newcommand{\ra}{\longrightarrow}
\newcommand{\field}[1]{\mathbb{#1}}
\newcommand{\R}{\field{R}} 
\newcommand{\N}{\field{N}} 
\newcommand{\eps}{\varepsilon} 
\renewcommand{\phi}{\varphi}
\newcommand{\diff}[1]{\ifmmode\mathchoice{\hbox{\rm d}#1}  
 {\hbox{\rm d}#1}  
 {\scalebox{0.75}{$\hbox{\rm d}#1$}}  
 {\scalebox{0.35}{$\hbox{\rm d}#1$}}  
 \fi} 
\newcommand{\abs}[2][\empty]{\ifx#1\empty\left|#2\right|%
\else#1\vert #2 #1\vert\fi}
\newcommand{\Rtil}{\widetilde \R} 
\newcommand{\csp}[1]{{\text{\rm c}}({#1})}
\newcommand{\fcmp}{\Subset_{\text{\rm f}}}
\newcommand{\frontRise}[2]{\ifmmode\mathchoice{{\vphantom{#1}}^{\scalebox{0.6}{$#2$}}}  
 {{\vphantom{#1}}^{\scalebox{0.56}{$#2$}}}  
 {{\vphantom{#1}}^{\scalebox{0.47}{$#2$}}}  
 {{\vphantom{#1}}^{\scalebox{0.35}{$#2$}}}\fi} 
\newcommand{\RC}[1]{\frontRise{\R}{#1}\Rtil}
\newcommand{\rcrho}{\RC{\rho}}
\newcommand{\rti}{\RC{\rho}}
\newcommand{\gsf}{\frontRise{\mathcal{G}}{\rho}\mathcal{GC}^{\infty}}
\newcommand{\hyperN}[1]{	\frontRise{\N}{#1}\widetilde{\N}}
\newcommand{\hypNs}{\hyperN{\sigma}}
\newcommand{\nint}{\text{ni}}
\newcommand{\hyperlimarg}[3]{\mathchoice{\frontRise{\lim}{\raisebox{-0.05em}{$#1\hspace{-0.67em}$}}\lim_{#3\in \hyperN{#2}\,}}
{\frontRise{\lim}{#1\hspace{-0.25em}}\lim_{#3\in \hyperN{#2}\,}}
{\frontRise{\lim}{#1\hspace{-0.25em}}\lim_{#3\in \hyperN{#2}\,}}
{\frontRise{\lim}{#1\hspace{-0.25em}}\lim_{#3\in \hyperN{#2}\,}}}
\newcommand{\hyperlim}[2]{\hyperlimarg{#1}{#2}{n}}
\newcommand{\hypersumarg}[3]{\mathchoice{\frontRise{\sum}{\raisebox{-0.2em}{$#1\hspace{-0.67em}$}}\sum_{#3\in \hyperN{#2}\,}}
{\frontRise{\sum}{#1\hspace{-0.25em}}\sum_{#3\in \hyperN{#2}\,}}
{\frontRise{\sum}{#1\hspace{-0.25em}}\sum_{#3\in \hyperN{#2}\,}}
{\frontRise{\sum}{#1\hspace{-0.25em}}\sum_{#3\in \hyperN{#2}\,}}}
\newcommand{\parthypersumarg}[4]{\mathchoice{\frontRise{\sum}{\raisebox{-0.2em}{$#1\hspace{-2.2em}$}}\sum_{#3\in \hyperN{#2}_{#4}\,}}
{\frontRise{\sum}{#1\hspace{-0.25em}}\sum_{#3\in \hyperN{#2}_{#4}\,}}
{\frontRise{\sum}{#1\hspace{-0.25em}}\sum_{#3\in \hyperN{#2}_{#4}\,}}
{\frontRise{\sum}{#1\hspace{-0.25em}}\sum_{#3\in \hyperN{#2}_{#4}\,}}}
\newcommand{\hypersum}[2]{\hypersumarg{#1}{#2}{n}}
\newcommand{\subzero}{\subseteq_{0}}
\newcommand{\sbpt}[1]{#1_{\text{\rm s}}}
\newcommand{\frontRiseDown}[3]{\ifmmode\mathchoice{{\vphantom{#1}}^{\scalebox{0.6}{$#2$}}_{\scalebox{0.6}{$#3$}}}  
 {{\vphantom{#1}}^{\scalebox{0.56}{$#2$}}_{\scalebox{0.56}{$#3$}}}  
 {{\vphantom{#1}}^{\scalebox{0.47}{$#2$}}_{\scalebox{0.47}{$#3$}}}  
 {{\vphantom{#1}}^{\scalebox{0.35}{$#2$}}_{\scalebox{0.35}{$#3$}}}\fi} 
\newcommand{\RCud}[2]{\frontRiseDown{\R}{#1}{#2}\Rtil}
\newcommand{\rcrhos}{\RCud{\rho}{\sigma}_{\text{\rm s}}}
\newcommand{\gsft}{\frontRiseDown{\mathcal{G}}{\rho}{\sigma}\mathcal{GC}^{\omega}}
\newcommand{\hps}[1]{\RCud{\rho}{\sigma}\llbracket #1 \rrbracket}
\newcommand{\hpsarg}[3]{\RCud{#1}{#2}\llbracket #3 \rrbracket}
\newcommand{\rhoext}{\frontRise{\R}{\rho}\overline{\R}}
\newcommand{\radconv}[1]{\text{\rm rad}\left(#1\right)_{\rm c}}
\newcommand{\radconveps}[1]{\text{\rm rad}\left(#1\right)_{{\rm c}{\eps}}}
\newcommand{\rcrhoc}{\rcrho_{\text{\rm c}}}
\newcommand{\setconv}[2]{\frontRiseDown{\text{\rm c}}{\rho}{\sigma}\text{\rm conv}\left(\left(#1\right)_{\text{\rm c}},#2\right)}
\providecommand{\corollaryname}{Corollary}
\providecommand{\definitionname}{Definition}
\providecommand{\examplename}{Example}
\providecommand{\lemmaname}{Lemma}
\providecommand{\remarkname}{Remark}
\providecommand{\theoremname}{Theorem}
\begin{document}
\title{hyper-power series and generalized real analytic functions}
\author{Diksha Tiwari \and Akbarali Mukhammadiev \and Paolo Giordano}
\thanks{D.~Tiwari has been supported by grant P 30407 and P33538 of the Austrian
Science Fund FWF}
\address{\textsc{Faculty of Mathematics, University of Vienna, Austria}}
\email{diksha.tiwari@univie.ac.at}
\thanks{A.~Mukhammadiev has been supported by Grant P30407 and P33538 of
the Austrian Science Fund FWF}
\address{\textsc{Faculty of Mathematics, University of Vienna, Austria}}
\email{akbarali.mukhammadiev@univie.ac.at}
\thanks{P.~Giordano has been supported by grants P30407, P34113, P33538 of
the Austrian Science Fund FWF}
\address{\textsc{Faculty of Mathematics, University of Vienna, Austria}}
\email{paolo.giordano@univie.ac.at}
\subjclass[2020]{46F-XX, 46F30, 26E3}
\keywords{Colombeau generalized numbers, non-Archimedean rings, generalized
functions.}
\begin{abstract}
This article is a natural continuation of the paper Tiwari, D., Giordano,
P., \emph{Hyperseries in the non-Archimedean ring of Colombeau generalized
numbers} in this journal. We study one variable hyper-power series
by analyzing the notion of radius of convergence and proving classical
results such as algebraic operations, composition and reciprocal of
hyper-power series. We then define and study one variable generalized
real analytic functions, considering their derivation, integration,
a suitable formulation of the identity theorem and the characterization
by local uniform upper bounds of derivatives. On the contrary with
respect to the classical use of series in the theory of Colombeau
real analytic functions, we can recover several classical examples
in a non-infinitesimal set of convergence. The notion of generalized
real analytic function reveals to be less rigid both with respect
to the classical one and to Colombeau theory, e.g.~including classical
non-analytic smooth functions with flat points and several distributions
such as the Dirac delta. On the other hand, each Colombeau real analytic
function is also a generalized real analytic function.
\end{abstract}

\maketitle

\section{Introduction}

In this article, the study of hyperseries in the non-Archimedean ring
of Colombeau generalized numbers (CGN), as carried out in \cite{TiGi},
is applied to the corresponding notion of hyper-power series. As we
will see, this yields results which are more closely related to classical
ones, such as, e.g.~the equality $\hypersum{\rho}{\rho}\frac{x^{n}}{n!}=e^{x}$
that holds for all $x\in\rti$ where the exponential is moderate,
i.e.~if $|x|\le\log\left(\diff{\rho}^{-R}\right)$ for some $R\in\N$.
On the other hand, we will see that classical smooth but non-analytic
functions, e.g.~smooth functions with flat points, and Schwartz distributions
like the Dirac delta, are now included in the related notion of \emph{generalized
real analytic function} (GRAF). This implies that necessarily we cannot
have a trivial generalization of the identity theorem (see e.g.~\cite[Cor.~1.2.6, 1.2.7]{KrPa02})
but, on the contrary, only a suitable sufficient condition (see Thm.~\ref{thm:identStd}
below). The notion of generalized real analytic function hence reveals
to be less rigid than the classical concept, by including a large
family of non-trivial generalized functions (e.g.~Dirac delta $\delta$,
Heaviside function $H$, but also powers $\delta^{k}$, $k\in\N$,
and compositions $\delta\circ\delta$, $\delta^{k}\circ H^{h}$, $H^{h}\circ\delta^{k}$,
etc., for $h$, $k\in\N$.

Conversely, GRAF preserve a lot of classical results: they can be
thought of as infinitely long polynomials $f(x)=\hypersum{\rho}{\sigma}a_{n}(x-c)^{n}$,
with uniquely determined coefficient $a_{n}=\frac{f^{(n)}(c)}{n!}$,
they can be added, multiplied, composed, differentiated, integrated
term by term, are closed with respect to inverse function, etc. This
lays the foundation for a potential interesting generalization of
the Cauchy-Kowalevski theorem which is able to include many non-analytic
(but generalized real analytic) generalized functions.

Concerning the theory of analytic Colombeau generalized functions,
as developed in \cite{PiScVa09} for the real case and in \cite{C1,CoGa84,Ara86,CoGa88,ObPiVa,KhSc,Ver08}
for the complex one, it is worth to mention that several properties
have been proved in both cases: closure with respect to composition,
integration over homotopic paths, Cauchy integral theorem, existence
of analytic representatives, identity theorem on a set of positive
Lebesgue measure, etc.~(cf. \cite{PiScVa09,Ver08} and references
therein). On the other hand, even if in \cite{Ver08} it is also proved
that each complex analytic Colombeau generalized functions can be
written as a Taylor series, necessarily this result holds only in
an infinitesimal neighborhood of each point. The impossibility to
extend this property to a finite neighborhood is a general drawback
of the use of ordinary series in a (Cauchy complete) non-Archimedean
framework instead of hyperseries, as explained in details in \cite{TiGi}.

We refer to \cite{MTAG} for basic notions such as the ring of Robinson-Colombeau,
subpoints, hypernatural numbers, supremum, infimum and hyperlimits,
and \cite{TiGi} for the notion of hyperseries as well as their notations
and properties. Once again, the ideas presented in the present article
can be useful to explore similar ideas in other non-Archimedean settings,
such as \cite{BL15,KhKo18,KhKo18b,BM19,Kei63,Kob96,Sham13}.

\section{hyper-power series and its basic properties}

\subsection{Definition of hyper-power series}

In the entire paper, $\rho$ and $\sigma$ are two arbitrary gauges;
only when it will be needed, we will assume a relation between them,
such as $\sigma\le\rho^{*}$ or $\sigma\ge\rho^{*}$ (see \cite{TiGi}).

A power series of \emph{real numbers} is simply ``a series of the
form $\sum_{n\in\N}a_{n}(x-c)^{n}$, where $x$, $c$, $a_{n}\in\R$
for all $n\in\N$''. Actually, this (informal) definition allows
us to consider only finite sums $\sum_{n=0}^{N}a_{n}(x-c)^{n}$, $N\in\N$,
and hence to evaluate whether convergence holds or not. A similar
approach can be used for hyper-power series (HPS) if we think at the
$\rti$-module $\rcrhos$ of sequences for hyperseries exactly as
the space where we can consider hyperfinite sums regardless of convergence.
This is the idea to define the space $\hps{x-c}$ of formal HPS:
\begin{defn}
\label{def:formalHps}Let $x$, $c\in\rti$. We say $(b_{n})_{n}\in\hps{x-c}$
if and only if there exist $\left(a_{n\eps}\right)_{n,\eps}\in\R^{\N\times I}$
and representatives $[x_{\eps}]=x$, $[c_{\eps}]=c$ such that
\begin{equation}
(b_{n})_{n}=\left[a_{n\eps}\cdot\left(x_{\eps}-c_{\eps}\right)^{n}\right]_{\text{{\rm s}}}\in\rcrhos.\label{eq:defHPS}
\end{equation}
\end{defn}

\noindent Elements of $\hps{x-c}$ are called \emph{formal} HPS because
here we are not considering their convergence. In other words, a formal
HPS is a hyper series (i.e.~an equivalence class $(b_{n})_{n}\in\rcrhos$
in the space of sequences for hyperseries) of the form $\left[a_{n\eps}\cdot\left(x_{\eps}-c_{\eps}\right)^{n}\right]_{\text{{\rm s}}}$.
\begin{rem}
~
\begin{enumerate}
\item We explicitly note that $x-c$ is not an indeterminate, like in the
case of formal power series $\R\llbracket x\rrbracket$, but a generalized
number of $\rcrho$. For example, in Lem\@.~\ref{lem:indepRepr}
below, we will prove that if $x-c=y-d$, then $\hps{x-c}=\hps{y-d}$.
\item On the contrary with respect to the case of real numbers, being a
formal HPS, i.e.~an element of $\hps{x-c}$, depends on the interplay
of the two gauges $\rho$ and $\sigma$: take e.g.~$a_{n}=\frac{1}{n^{2}}$
and $x-c=2$, so that for all $N\in\hypNs$ we have $\sum_{n=1}^{N}a_{n}(x-c)^{n}\ge\sum_{n=0}^{N}\frac{1}{n}\sim\log(N)$.
Therefore, taking e.g.~$\sigma_{\eps}=\exp\left(-\exp\left(\frac{1}{\rho_{\eps}}\right)\right)$
and $N_{\eps}:=\text{int}\left(\sigma_{\eps}\right)$, we have that
$\left(\log N_{\eps}\right)\notin\R_{\rho}$ and hence we cannot even
consider hyperfinite sums of this form. Informally stated, for this
gauge $\sigma$, we have that $\hypersum{\rho}{\sigma}\frac{1}{n^{2}}2^{n}$
is \emph{not} a formal HPS, i.e.~even before considering its convergence
or not, we cannot compute $\sigma$-hyperfinite sums and get a number
in $\rcrho$.
\item In \cite{TiGi}, we proved that if $x$ is finite, then $\left[\frac{x^{n}}{n!}\right]_{\text{{\rm s}}}\in\hps{x}$
is a formal HPS for all gauges $\rho$, $\sigma$. In Sec.~\ref{exa:Exponential},
we will prove that $\left[\frac{\left(\diff{\rho}^{-1}\right)^{n}}{n!}\right]_{\text{{\rm s}}}\notin\hpsarg{\rho}{\rho}{\diff{\rho}^{-1}}$;
on the other hand, we will also see that if $x\le\log\left(\diff\rho^{-N}\right)$
and $\diff\sigma^{Q}\le\diff\rho^{N}$ for some $Q\in\N$, then $\left[\frac{x^{n}}{n!}\right]_{\text{{\rm s}}}\in\hps{x}$
is a formal HPS.
\end{enumerate}
\end{rem}

The previous Def.~\ref{def:formalHps} sets immediate problems concerning
independence of representatives: every time we start from $\left[a_{n\eps}\right]\in\rcrho$,
for all $n\in\N$, $[x_{\eps}]=x$, $[c_{\eps}]=c$ and we have that
$\left[a_{n\eps}\cdot\left(x_{\eps}-c_{\eps}\right)^{n}\right]_{\text{{\rm s}}}\in\rcrhos$,
we can consider whether the corresponding formal HPS $\hypersum{\rho}{\sigma}[a_{n\eps}]\cdot\left([x_{\eps}]-[c_{\eps}]\right)^{n}$
converges or not. On the other hand, we also have to prove that it
is well-defined, i.e.~that taking different representatives $\left[\bar{a}_{n\eps}\right]=\left[a_{n\eps}\right]$,
$[\bar{x}_{\eps}]=x$, $[\bar{c}_{\eps}]=c$, we have $\left[a_{n\eps}\cdot\left(x_{\eps}-c_{\eps}\right)^{n}\right]_{\text{{\rm s}}}=\left[\bar{a}_{n\eps}\cdot\left(\bar{x}_{\eps}-\bar{c}_{\eps}\right)^{n}\right]_{\text{{\rm s}}}$.
However, from \cite[Sec.~2]{TiGi} it follows that we can have $x-c=1$
and $\left[a_{n\eps}\right]=\left[\bar{a}_{n\eps}\right]=0$ for all
$n\in\N$, but
\[
\left[\sum_{n=0}^{N_{\eps}}a_{n\eps}(x_{\eps}-c_{\eps})^{n}\right]\ne\left[\sum_{n=0}^{N_{\eps}}\bar{a}_{n\eps}(x_{\eps}-c_{\eps})^{n}\right].
\]
This means that $(b_{n})_{n}:=\left[a_{n\eps}(x_{\eps}-c_{\eps})^{n}\right]_{\text{s}}$
and $(\bar{b}_{n})_{n}:=\left[\bar{a}_{n\eps}(x_{\eps}-c_{\eps})^{n}\right]_{\text{s}}$
yield two different formal HPS (see \cite[Thm.~4]{TiGi}) and hence,
in general, the operation
\[
\left((a_{n\eps})_{n,\eps},(x_{\eps}),(c_{\eps})\right)\in\rti^{\N}\times\rti^{2}\mapsto(b_{n})_{n}:=\left[a_{n\eps}(x_{\eps}-c_{\eps})^{n}\right]_{\text{s}}\in\rcrhos
\]
is not well-defined. The problem can also be addressed differently:
what notion of equality do we have to set on a suitable subring of
$\R^{\N\times I}$ so as to have independence on representatives?
This notion of equality naturally emerges in proving that the following
definition of radius of convergence is well-defined (see Lem.~\ref{lem:radConvWellDef}).
What subring we need to consider arises from the idea to include $\left(\frac{\delta_{\eps}^{(n)}(0)}{n!}\right)_{n,\eps}$in
it, where $\delta=\left[\delta_{\eps}(-)\right]$ is a suitable embedding
of Dirac's delta function (see Example \ref{rem:radConv}.\ref{enu:deltaRadConv}).

\subsection{Radius of convergence}

The idea to define the radius of convergence corresponding to coefficients
$(a_{n\eps})_{n,\eps}\in\R^{\N\times I}$ is that it does not matter
if
\[
\left(\limsup_{n\to+\infty}\left|a_{n\eps}\right|^{1/n}\right)^{-1}\in\R\cup\{+\infty\}
\]
yields a non $\rho$-moderate net (for example for $\eps\in L\subzero I$)
because this case would intuitively identify a radius of convergence
larger than any infinite number in $\rcrho$:
\begin{defn}
\label{def:radCon}~
\begin{enumerate}
\item \label{enu:Rext}Let $\overline{\R}:=\R\cup\{-\infty,\infty\}$ be
the extended real number system with the usual (partially defined)
operations. We set $\rhoext:=\overline{\R}^{I}/\sim_{\rho}$, where
for arbitrary $(x_{\eps})$, $(y_{\eps})\in\overline{\R}^{I}$, as
usual we define
\[
(x_{\eps})\sim_{\rho}(y_{\eps})\quad:\iff\quad\forall q\in\N\,\forall^{0}\eps:\ \left|x_{\eps}-y_{\eps}\right|\le\rho_{\eps}^{q}.
\]
In $\rhoext$, we can also consider the standard order relation
\[
x\le y\quad:\iff\quad\exists[x_{\eps}]=x,[y_{\eps}]=y\,\forall^{0}\eps:\ x_{\eps}\le y_{\eps}.
\]
Note that $\left(\rhoext\setminus\{-\infty\},+,\le\right)$ is an
ordered group but, since we are considering arbitrary nets $\overline{\R}^{I}$,
the set $\rhoext$ is not a ring: e.g.~$+\infty\cdot0$ is still
undefined and $+\infty\cdot[z_{\eps}]=[+\infty]$ for all $(z_{\eps})\in\R_{>0}^{I}$.
\item \label{enu:coeffHPS}Moreover, we denote by $\rcrhoc:=\left(\R^{\N\times I}\right)_{\rho}/\simeq_{\rho}$
the quotient ring of \emph{coefficients for HPS}, where 
\begin{equation}
\left(a_{n\eps}\right)_{n,\eps}\in\left(\R^{\N\times I}\right)_{\rho}:\!\iff\exists Q,R\in\N\,\forall^{0}\eps\,\forall n\in\N:\ \left|a_{n\eps}\right|\le\rho_{\eps}^{-nQ-R}\label{eq:weakMod}
\end{equation}
is the ring of \emph{weakly $\rho$-moderate nets}, and
\begin{equation}
\left(a_{n\eps}\right)_{n,\eps}\simeq_{\rho}\left(\bar{a}_{n\eps}\right)_{n,\eps}:\!\iff\forall q,r\in\N\,\forall^{0}\eps\,\forall n\in\N:\ \left|a_{n\eps}-\bar{a}_{n\eps}\right|\le\rho_{\eps}^{nq+r}\label{eq:strongEq}
\end{equation}
(in this case, we say that these two nets are \emph{strongly $\rho$-equivalent}).
Equivalence classes of $\rcrhoc$ are denoted by $(a_{n})_{{\rm c}}:=\left[a_{n\eps}\right]_{\text{c}}\in\rcrhoc$.
\item \label{enu:radConv}Finally, if $(a_{n})_{{\rm c}}=\left[a_{n\eps}\right]_{\text{c}}\in\rcrhoc$,
then we set $\radconveps{a_{n}}:=r_{\eps}.$ and $\radconv{a_{n}}=:[r_{\eps}]\in\rhoext$,
where
\begin{equation}
r_{\eps}:=\left(\limsup_{n\to+\infty}\left|a_{n\eps}\right|^{1/n}\right)^{-1}\in\R\cup\{+\infty\}.\label{eq:r_eps}
\end{equation}
\end{enumerate}
\end{defn}

\noindent In the following lemma, we prove that $\radconv{a_{n}}$
is well-defined:
\begin{lem}
\label{lem:radConvWellDef}Let $(a_{n})_{{\rm c}}=\left[a_{n\eps}\right]_{\text{{\rm c}}}=\left[\bar{a}_{n\eps}\right]_{\text{{\rm c}}}\in\rcrhoc$.
Define $r_{\eps}$ as in \eqref{eq:r_eps} and similarly define $\bar{r}_{\eps}$
using $\bar{a}_{n\eps}$. Then $(r_{\eps})\sim_{\rho}(\bar{r}_{\eps})$,
and hence $[r_{\eps}]=[\bar{r}_{\eps}]$ in $\rhoext$.
\end{lem}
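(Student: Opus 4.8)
The plan is to reduce everything to a comparison of the two superior limits and then invert. Set $L_\eps:=\limsup_{n\to+\infty}|a_{n\eps}|^{1/n}$ and $\bar L_\eps:=\limsup_{n\to+\infty}|\bar a_{n\eps}|^{1/n}$, so that $r_\eps=L_\eps^{-1}$ and $\bar r_\eps=\bar L_\eps^{-1}$ in $\R\cup\{+\infty\}$ under the conventions $0^{-1}=+\infty$ and $(+\infty)^{-1}=0$. By hypothesis $[a_{n\eps}]_{\rm c}=[\bar a_{n\eps}]_{\rm c}$ means $(a_{n\eps})\simeq_\rho(\bar a_{n\eps})$, i.e.\ for all $q,r\in\N$ and all $\eps$ small, $|a_{n\eps}-\bar a_{n\eps}|\le\rho_\eps^{nq+r}$ for every $n$; moreover both nets are weakly $\rho$-moderate, say $|a_{n\eps}|,|\bar a_{n\eps}|\le\rho_\eps^{-nQ-R}$ (Def.~\ref{def:radCon}.\ref{enu:coeffHPS}).

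First I would show $(L_\eps)\sim_\rho(\bar L_\eps)$. The engine is the elementary subadditivity $(u+v)^{1/n}\le u^{1/n}+v^{1/n}$ for $u,v\ge0$, $n\ge1$. From $|a_{n\eps}|\le|\bar a_{n\eps}|+\rho_\eps^{nq+r}$ it gives $|a_{n\eps}|^{1/n}\le|\bar a_{n\eps}|^{1/n}+\rho_\eps^{q+r/n}$, and since $\rho_\eps\le1$ eventually the last term is $\le\rho_\eps^{q}$. Passing to $\limsup_n$ (a constant summand survives the $\limsup$) yields $L_\eps\le\bar L_\eps+\rho_\eps^{q}$; by symmetry $\bar L_\eps\le L_\eps+\rho_\eps^{q}$, whence $|L_\eps-\bar L_\eps|\le\rho_\eps^{q}$ for every $q$, that is $(L_\eps)\sim_\rho(\bar L_\eps)$ in $\rhoext$. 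The same root estimate applied to weak moderateness, $|a_{n\eps}|^{1/n}\le\rho_\eps^{-Q-R/n}$, gives after $n\to+\infty$ the uniform upper bound $L_\eps,\bar L_\eps\le\rho_\eps^{-Q}$, which I will use below.

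Next I would invert, writing $r_\eps-\bar r_\eps=(\bar L_\eps-L_\eps)/(L_\eps\bar L_\eps)$ on the set where both $L_\eps,\bar L_\eps\in(0,+\infty)$. When $L_\eps$ is appreciable, i.e.\ $L_\eps\ge\rho_\eps^{m}$ for a fixed $m$, the closeness just proved forces $\bar L_\eps\ge\tfrac12\rho_\eps^{m}$ as well, so $L_\eps\bar L_\eps\ge\tfrac12\rho_\eps^{2m}$ and, taking the exponent in $|\bar L_\eps-L_\eps|\le\rho_\eps^{q}$ large enough, $|r_\eps-\bar r_\eps|\le 2\rho_\eps^{\,q-2m}\le\rho_\eps^{q'}$ for the prescribed $q'$. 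The degenerate values are matched through the conventions of Def.~\ref{def:radCon}.\ref{enu:Rext}: if $L_\eps=0$ then $r_\eps=+\infty$ and $\bar L_\eps$ is $\rho$-infinitesimal, while if $L_\eps=+\infty$ then $r_\eps=0$ and $\bar L_\eps$ is $\rho$-infinitely large, and in each case the pair $(r_\eps,\bar r_\eps)$ is compatible with the order and arithmetic of $\rhoext$.

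The main obstacle is precisely the passage to reciprocals when $L_\eps$ is $\rho$-infinitesimal but nonzero, so that $r_\eps$ is $\rho$-infinitely large: here the additive estimate $|L_\eps-\bar L_\eps|\le\rho_\eps^{q}$ no longer dominates the factor $L_\eps\bar L_\eps$ in the denominator, and the argument must be organised as a partition of $I$ into the regimes where $L_\eps$ is appreciable, $\rho$-infinitesimal, or zero, controlling the quotient separately on each subset by means of the moderateness bound $L_\eps\le\rho_\eps^{-Q}$ together with the order structure of $\rhoext$ recalled in Def.~\ref{def:radCon}.\ref{enu:Rext}. Once this case analysis is settled, combining the regimes via the definition of $\sim_\rho$ as an eventual, $\forall^{0}\eps$ statement yields $(r_\eps)\sim_\rho(\bar r_\eps)$ and hence $[r_\eps]=[\bar r_\eps]$ in $\rhoext$.
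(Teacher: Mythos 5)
Your first paragraph is, in mathematical content, the paper's \emph{entire} proof: the root subadditivity $(u+v)^{1/n}\le u^{1/n}+v^{1/n}$, the choice $r=0$ in \eqref{eq:strongEq}, the passage to $\limsup$, and the symmetry step giving $\left|\limsup_{n}|a_{n\eps}|^{1/n}-\limsup_{n}|\bar a_{n\eps}|^{1/n}\right|\le\rho_{\eps}^{q}$ for all $q$ and $\eps$ small. The paper stops exactly there and declares the claim proved; it never attempts the inversion $L_{\eps}\mapsto L_{\eps}^{-1}$ that occupies the rest of your proposal. Your instinct that this inversion is not automatic is correct --- it is precisely the step the paper passes over in silence --- but your treatment of it is not a proof. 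You settle only the regime $L_{\eps}\ge\rho_{\eps}^{m}$ for a fixed $m$; the regime where $L_{\eps}$ is nonzero but below every power $\rho_{\eps}^{m}$ is postponed to an unspecified partition of $I$ (``once this case analysis is settled\ldots''), and the degenerate cases are waved through as ``compatible with the order and arithmetic of $\rhoext$'', which is not the inequality $|r_{\eps}-\bar r_{\eps}|\le\rho_{\eps}^{q}$ that $\sim_{\rho}$ demands.

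That deferred case cannot be settled, so the gap is one of substance, not exposition. Take $a_{n\eps}:=\rho_{\eps}^{n/\eps}$ and $\bar a_{n\eps}:=2^{n}\rho_{\eps}^{n/\eps}$, a variant of Rem.~\ref{rem:radConv}.\ref{enu:nonModRadConv}. Both nets satisfy \eqref{eq:weakMod} (with $Q=1$, $R=0$, since $2^{n}\le\rho_{\eps}^{-n}$ for $\rho_{\eps}\le\frac12$), and they are strongly $\rho$-equivalent: the $n=0$ terms coincide, while for $n\ge1$ and $\eps$ small, $2^{n}\rho_{\eps}^{n/\eps}\le\rho_{\eps}^{n/\eps-n}\le\rho_{\eps}^{nq+r}$. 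Their limit superiors $L_{\eps}=\rho_{\eps}^{1/\eps}$ and $\bar L_{\eps}=2\rho_{\eps}^{1/\eps}$ are indeed $\sim_{\rho}$-equivalent, as your (and the paper's) first step guarantees, but the reciprocals $r_{\eps}=\rho_{\eps}^{-1/\eps}$ and $\bar r_{\eps}=\frac{1}{2}\rho_{\eps}^{-1/\eps}$ differ by $\frac{1}{2}\rho_{\eps}^{-1/\eps}$, which eventually exceeds every $\rho_{\eps}^{-q}$, let alone every $\rho_{\eps}^{q}$. Note also that the moderateness bound you plan to use points the wrong way: \eqref{eq:weakMod} gives the \emph{upper} bound $L_{\eps}\le\rho_{\eps}^{-Q}$, i.e.~$r_{\eps}\ge\rho_{\eps}^{Q}$ as in Thm.~\ref{thm:radConvTool}.\ref{enu:radConvZero}, and nothing prevents $L_{\eps}$ from lying below every power of $\rho_{\eps}$. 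So no partition of $I$ can yield the pointwise estimate for the reciprocals: the invariant that is genuinely well defined is the class of the nets of limit superiors --- which is what your first paragraph establishes and where the paper's proof stops --- while the identification of all radii larger than every $\diff\rho^{-q}$ with $+\infty$ is the informal convention stated before Def.~\ref{def:radCon}, not a consequence of $\sim_{\rho}$ applied to $(r_{\eps})$. The honest conclusion is either to stop where the paper stops, or to restate the claim in terms of $L_{\eps}$ (equivalently, in terms of the order comparisons with powers of $\diff\rho$ that Thm.~\ref{thm:radConvTool} actually uses), rather than to promise an argument that is not available.
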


\begin{proof}
For all $\eps\in I$ and all $n\in\N_{>0}$, we have $\left|\bar{a}_{n\eps}\right|^{1/n}\le\left(\left|\bar{a}_{n\eps}-a_{n\eps}\right|+\left|a_{n\eps}\right|\right)^{1/n}$.
The binomial formula yields $(x+y)\le\left(x^{1/n}+y^{1/n}\right)^{n}$
for all $x$, $y\in\R_{\ge0}$, so that $\left|\bar{a}_{n\eps}\right|^{1/n}\le\left|\bar{a}_{n\eps}-a_{n\eps}\right|^{1/n}+\left|a_{n\eps}\right|^{1/n}$.
Setting $r=0$ in \eqref{eq:strongEq}, for all $q\in\N$ and for
$\eps$ small we have
\[
\forall n\in\N:\ \left|a_{n\eps}-\bar{a}_{n\eps}\right|\le\rho_{\eps}^{nq}.
\]
Therefore, for the same $\eps$ we get $\left|\bar{a}_{n\eps}\right|^{1/n}\le\rho_{\eps}^{q}+\left|a_{n\eps}\right|^{1/n}$.
Taking the limit superior we obtain $\limsup_{n\to+\infty}\left|\bar{a}_{n\eps}\right|^{1/n}\le\rho_{\eps}^{q}+\limsup_{n\to+\infty}\left|a_{n\eps}\right|^{1/n}$.
Inverting the role of $(a_{n\eps})_{n,\eps}$ and $(\bar{a}_{n\eps})_{n,\eps}$
we finally obtain
\[
\forall^{0}\eps:\ -\rho_{\eps}^{q}\le\limsup_{n\to+\infty}\left|a_{n\eps}\right|^{1/n}-\limsup_{n\to+\infty}\left|\bar{a}_{n\eps}\right|^{1/n}\le\rho_{\eps}^{q},
\]
which proves the claim.
\end{proof}
\begin{rem}
~\label{rem:radConv}
\begin{enumerate}
\item \label{enu:whyR}If $(a_{n})_{{\rm c}}=\left[a_{n\eps}\right]_{\text{{\rm c}}}\in\rcrhoc$,
then for each fixed $n\in\N$, we have that $[\left(a_{n\eps}\right)_{\eps}]\in\rti$,
i.e.~the net $\left(a_{n\eps}\right)_{\eps}$ is $\rho$-moderate.
This is the main motivation to consider the exponent ``$-R$'' in
\eqref{eq:weakMod} (recall that in our notation $0\in\N$): without
the term ``$-R$'', the only possibility to have $(a_{n})_{c}\in\rhoext$
is that $|a_{0}|\le1$, which is an unnecessary limitation. Similarly,
we can motivate why we are considering the quantifier ``$\forall n\in\N$''
in the same formula (instead of, e.g., ``$\exists N\in\N\,\forall n\in\N_{\ge N}$'').
The proof of the next Lem.~\ref{lem:indepRepr} will motivate why
in \eqref{eq:weakMod} we consider the uniform property ``$\forall^{0}\eps\,\forall n\in\N$''
and not ``$\forall n\in\N\,\forall^{0}\eps$''.
\item \label{enu:rcrhoSubsetRhoext}Note that $\rcrho\subseteq\rhoext$
because the notion of equality $\sim_{\rho}$ in the two quotient
sets is the same and because if $(x_{\eps})$ is $\rho$-moderate
and $(x_{\eps})\sim_{\rho}(y_{\eps})$, then also $(y_{\eps})$ is
$\rho$-moderate.
\item Condition \eqref{eq:weakMod} of being weakly $\rho$-moderate represents
a constrain on what coefficients $a_{n}$ we can consider in a hyperseries.
For example, if $(a_{n})_{n\in\N}$ is a sequence of real numbers
satisfying $|a_{n}|\le p(n)$, where $p\in\R[x]$ is a polynomial,
then $p(n)\le\rho_{\eps}^{-nQ}$ for all $\eps$ sufficiently small
and for all $n\in\N$ if $Q\ge\max\left(1,\max\left\{ -\frac{\log n}{p(n)\log\rho_{\eps}}\mid n<N_{1}\right\} \right)$,
where $\frac{\log n}{P(n)}\le1$ for all $n\ge N_{1}$ and $-\frac{1}{\log\rho_{\eps}}\le1$.
Hence $\left(a_{n}\right)_{n,\eps}\in\left(\R^{\N\times I}\right)_{\rho}$
is weakly $\rho$-moderate. On the contrary, we cannot have $n^{n}\le\rho_{\eps}^{-nQ-R}=\rho_{\eps}^{-R}\left(\frac{1}{\rho_{\eps}^{Q}}\right)^{n}$
for all $n\in\N$. Similarly $(n!)_{n\in\N}$ is not weakly $\rho$-moderate
and hence our theory does not apply to a ``hyperseries'' of the
form $\hypersum{\rho}{\sigma}n!\cdot x^{n}$. On the other hand, in
Lem.~\ref{thm:radConvTool}.\ref{enu:radConvZero} we will show that,
as a consequence of considering only weakly moderate coefficients,
the radius of convergence of our hyperseries is always strictly positive.
\item \label{enu:nonModRadConv}Let $a_{n\eps}=\rho_{\eps}^{\frac{n+1}{\eps}}$,
so that $\left[a_{n\eps}\right]_{\text{c}}=0$. The corresponding
radius of convergence is $r_{\eps}=\lim_{n\to+\infty}|a_{n\eps}|^{1/n}=\rho_{\eps}^{1/\eps}$
which is not $\rho$-moderate. In general, if $\radconv{a_{n}}=[r_{\eps}]=:r\in\rhoext$,
we can have different behavior on different subpoints, e.g.~$r|_{L_{1}}=+\infty$,
$r|_{L_{2}}\in\rti$, $r|_{L_{3}}$ non $\rho$-moderate, etc., where
$L_{i}\subzero I$. This behavior is studied in Lem.~\ref{thm:radConvTool}
below.
\item \label{enu:deltaRadConv}Let $\mu:=\mathcal{F}^{-1}(\beta)\in\mathcal{S}(\R)$
be a Colombeau mollifier defined as the inverse Fourier transform
of a smooth, supported in $[-1,1]_{\R}$, even bump function $0\le\beta\le1$
which identically equals $1$ in a neighborhood of $0$ (see e.g.~\cite{GKOS}).
Let $i_{\R}^{b}$ be the embedding of Schwartz distributions into
generalized smooth functions (GSF) defined by $\mu$ and by the infinite
number $b\in\rti$ (see e.g.~\cite{TI}). The Schwartz's Paley-Wiener
theorem implies that $\mu$ is an entire function and we know that
if $\diff\rho^{-Q}\ge b=[b_{\eps}]\ge\diff\rho^{-R}$, for some $Q$,
$R\in\R_{>0}$, then the embedding of Dirac delta $\delta:=\iota_{\R}^{b}\left(\phi\mapsto\phi(0)\right)\in\gsf(\rti,\rti)$
is defined by the net $\delta_{\eps}(x)=b_{\eps}\mu(b_{\eps}x)$ (see
e.g.~\cite{TI}). For $n\in\N$, we have $\mu^{(n)}(0)=\frac{1}{2\pi}\int\beta(x)(ix)^{n}\,\diff x=0$
if $n$ is odd and $\left|\mu^{(n)}(0)\right|\le\frac{1}{2\pi}\left[\frac{x^{n+1}}{n+1}\right]_{-1}^{1}\le1$
if $n$ if even. Thereby $\left|\frac{\delta_{\eps}^{(n)}(0)}{n!}\right|=\left|\frac{\mu^{(n)}(0)}{n!}b_{\eps}^{n+1}\right|\le\frac{1}{n!}\rho_{\eps}^{-nQ-Q}\le\rho_{\eps}^{-nQ-Q}$.
This inequality shows that $\left(\frac{\delta^{(n)}(0)}{n!}\right)_{\text{c}}\in\rcrhoc$
and motivates our definition of weakly $\rho$-moderate nets. The
corresponding radius of convergence is $r_{\eps}^{-1}=\limsup_{n\to+\infty}\left|\frac{\delta_{\eps}^{(n)}(0)}{n!}\right|^{1/n}=\limsup_{n\to+\infty}b_{\eps}^{1+1/n}\left|\frac{\mu^{(n)}(0)}{n!}\right|^{1/n}=b_{\eps}\cdot0=0$,
i.e.~$\radconv{\frac{\delta^{(n)}(0)}{n!}}=+\infty$.
\item Let $(a_{n})_{{\rm c}}=[a_{n\eps}]_{{\rm c}}\in\rcrhoc$, and assume
that for all $\eps$ there exists $r_{\eps}:=\left(\lim_{n\to+\infty}|a_{n\eps}|^{1/n}\right)^{-1}$
such that $r:=[r_{\eps}]\in\rti$. Then from \cite[Thm.~28]{MTAG},
for some gauge $\sigma\leq\rho$ we have $\hyperlim{\rho}{\sigma}|a_{n}|^{1/n}=\frac{1}{r}$
and $r=\radconv{a_{n}}\in\rcrho$. In Cor.~\ref{lem:sup_eq_rad},
we will see the relationship between our definition of radius of convergence
and the least upper bound of all the radii where the HPS converges.
\end{enumerate}
\end{rem}

In the following lemma, we show that $\rcrhoc$ is a ring:
\begin{lem}
\label{lem:rcrhocRing}With pointwise operations, $\rcrhoc$ is a
quotient ring.
\end{lem}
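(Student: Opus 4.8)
The plan is to exhibit $\rcrhoc$ as a genuine quotient ring by showing that the set $\left(\R^{\N\times I}\right)_{\rho}$ of weakly $\rho$-moderate nets is a subring of the commutative ring $\R^{\N\times I}$ (with pointwise sum and pointwise product indexed by $(n,\eps)$), and that strong $\rho$-equivalence $\simeq_{\rho}$ is a congruence on it. Throughout, the only analytic input I would use is that each gauge satisfies $\rho_{\eps}\to 0^{+}$, so that $\rho_{\eps}\le 1$ for $\eps$ small; this lets me both absorb numerical constants such as the factor $2$ coming from the triangle inequality (by lowering an exponent by one) and trade a larger exponent for a smaller value.

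For the subring property I would argue directly from \eqref{eq:weakMod}. Given $|a_{n\eps}|\le\rho_{\eps}^{-nQ_{1}-R_{1}}$ and $|b_{n\eps}|\le\rho_{\eps}^{-nQ_{2}-R_{2}}$ (for $\eps$ small and all $n$), the sum obeys $|a_{n\eps}+b_{n\eps}|\le 2\rho_{\eps}^{-n\max(Q_{1},Q_{2})-\max(R_{1},R_{2})}\le\rho_{\eps}^{-n\max(Q_{1},Q_{2})-\max(R_{1},R_{2})-1}$, and the pointwise product obeys $|a_{n\eps}b_{n\eps}|\le\rho_{\eps}^{-n(Q_{1}+Q_{2})-(R_{1}+R_{2})}$; both are again weakly $\rho$-moderate. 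Since the zero net and the constant net $1$ (with $Q=R=0$) lie in $\left(\R^{\N\times I}\right)_{\rho}$, it is a subring.

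Next I would check that $\simeq_{\rho}$ is an equivalence relation compatible with the operations. Reflexivity and symmetry are immediate from \eqref{eq:strongEq}; for transitivity, given target parameters $q,r$, I would apply the two hypotheses with $(q,r+1)$ and add, using $2\rho_{\eps}\le 1$ to recover $\rho_{\eps}^{nq+r}$. Compatibility with addition is the same triangle-inequality argument. The one step requiring a little care is compatibility with the pointwise product: assuming $(a_{n\eps})\simeq_{\rho}(\bar a_{n\eps})$ and $(b_{n\eps})\simeq_{\rho}(\bar b_{n\eps})$, I would split $a_{n\eps}b_{n\eps}-\bar a_{n\eps}\bar b_{n\eps}=(a_{n\eps}-\bar a_{n\eps})b_{n\eps}+\bar a_{n\eps}(b_{n\eps}-\bar b_{n\eps})$ and bound each summand by pairing the \emph{weak moderation} of $b$ (resp.\ $\bar a$), which contributes a fixed negative power $\rho_{\eps}^{-nQ-R}$, against the \emph{strong smallness} of the difference, which can beat any prescribed power.

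Concretely, to reach a target exponent $nq+r$ I would invoke \eqref{eq:strongEq} for $(a_{n\eps}-\bar a_{n\eps})$ with parameters $q'=q+1+Q_{b}$, $r'=r+1+R_{b}$ (and symmetrically for $(b_{n\eps}-\bar b_{n\eps})$), so that each summand is at most $\rho_{\eps}^{nq+r+1}$; adding the two and absorbing the factor $2$ yields $\le\rho_{\eps}^{nq+r}$ for $\eps$ small. Taking the maximum of the finitely many $\eps$-thresholds involved makes this uniform in $n$, as required by \eqref{eq:strongEq}. This shows $\simeq_{\rho}$ is a congruence; equivalently, the class $N_{\rho}$ of nets strongly $\rho$-equivalent to $0$ is an ideal of $\left(\R^{\N\times I}\right)_{\rho}$, the absorption property $N_{\rho}\cdot\left(\R^{\N\times I}\right)_{\rho}\subseteq N_{\rho}$ being exactly the one-sided version of the product estimate above. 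The inherited pointwise operations then make $\rcrhoc=\left(\R^{\N\times I}\right)_{\rho}/\simeq_{\rho}$ a commutative ring, and no deeper obstacle arises: the entire argument is bookkeeping of the two integer parameters in \eqref{eq:weakMod} and \eqref{eq:strongEq}, kept consistent by $\rho_{\eps}\le 1$.
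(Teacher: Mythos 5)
Your proof is correct: the subring estimates, the congruence estimates, and the uniformity-in-$n$ bookkeeping all go through exactly as you state, and the key splitting $a_{n\eps}b_{n\eps}-\bar a_{n\eps}\bar b_{n\eps}=(a_{n\eps}-\bar a_{n\eps})b_{n\eps}+\bar a_{n\eps}(b_{n\eps}-\bar b_{n\eps})$ together with ``strong smallness beats any fixed weak-moderateness bound'' is precisely the point that needs care. The paper, however, takes a shortcut you could not have seen: its primary proof is a citation of \cite[Thm.~3.6]{GiLu16}, observing that the family of bounding nets $\mathcal{B}=\{\left(\rho_{\eps}^{-nQ-R}\right)_{n,\eps}\mid Q,R\in\N\}$ is an \emph{asymptotic gauge}, so that $\rcrhoc$ is one instance of a general quotient-ring construction (of which $\rti$ itself is another). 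The paper then merely sketches the independent elementary route by listing three properties of $\mathcal{B}$, and your proof is exactly that route carried out in full: your closure of $\left(\R^{\N\times I}\right)_{\rho}$ under sum and product instantiates property (a) ($p+q\le r$, $p\cdot q\le s$); your transitivity/additive-congruence step instantiates property (b) ($r^{-1}+s^{-1}\le p^{-1}$, i.e.\ sums of negligibles are negligible); and your product-congruence estimate, pairing a negligible factor against a weakly moderate one, instantiates property (c) ($u^{-1}\cdot q+v^{-1}\cdot r\le p^{-1}$), which is the ideal-absorption statement. What the citation buys the paper is brevity and the conceptual point that the construction is generic; what your version buys is a self-contained verification with explicit exponent bookkeeping, showing concretely that the uniform quantifier ``$\forall^{0}\eps\,\forall n\in\N$'' in \eqref{eq:weakMod} and \eqref{eq:strongEq} is what makes all thresholds independent of $n$ (only a finite minimum of $\eps$-thresholds is needed, as you note). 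So there is no gap; your argument is the fully detailed form of the proof the paper only gestures at.
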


\begin{proof}
Actually, the result follows from \cite[Thm.~3.6]{GiLu16} because
the set
\[
\mathcal{B}:=\{\left(\rho_{\eps}^{-nQ-R}\right)_{n,\eps}\in\R^{\N\times I}\mid Q,\ R\in\N\}
\]
is an asymptotic gauge with respect to the order $(n,\eps)\le(\bar{n},\bar{\eps})$
if and only if $\eps\le\bar{\eps}$. However, an independent proof
follows the well-known lines of the corresponding proof for the ring
$\rti$, and depends on the following properties of $\mathcal{B}$:
\begin{enumerate}[label=(\alph*)]
\item $\forall p,q\in\mathcal{B}\,\exists r,s\in\mathcal{B}:\ p+q\le r,\ p\cdot q\le s$;
\item $\forall p\in\mathcal{B}\,\exists r,s\in\mathcal{B}:\ r^{-1}+s^{-1}\le p^{-1}$;
\item $\forall p,q,r\in\mathcal{B}\,\exists u,v\in\mathcal{B}:\ u^{-1}\cdot q+v^{-1}\cdot r\le p^{-1}$,
\end{enumerate}
where $p=(p_{n\eps})_{n,\eps}\le(q_{n\eps})_{n,\eps}=q$ means $\forall^{0}\eps\,\forall n\in\N:\ p_{n\eps}\le q_{n\eps}$.
\end{proof}
The following lemma represents a useful tool to deal with the radius
of convergence. It essentially states that the radius of convergence
equals $+\infty$ on some subpoint, or it is moderate on some subpoint
or it is greater than any power $\diff\rho^{-P}$.
\begin{thm}
\label{thm:radConvTool}Let $\left(a_{n}\right)_{\text{\emph{c}}}\in\rcrhoc$
and $r=[r_{\eps}]=\radconv{a_{n}}\in\rhoext$, then we have
\begin{enumerate}
\item \label{enu:radConvZero}$r>0$.
\item \label{enu:radConvInfty}$r<+\infty$ or $r\sbpt{=}+\infty$.
\item \label{enu:radConvAlternatives}If $r<+\infty$, then the following
alternatives hold
\begin{enumerate}[label=(\alph*)]
\item \label{enu:radConvAltNotMod}$\forall P\in\N:\ r>\diff\rho^{-P}$
or
\item \label{enu:radConvAltMod}setting
\begin{align}
 & \left[r\le\rho^{-P}\right]:=\left\{ \eps\mid r_{\eps}\le\rho_{\eps}^{-P}\right\} =:L_{P}\nonumber \\
 & P_{\text{m}}:=\min\left\{ P\in\N\mid\left[r\le\rho^{-P}\right]\subzero I\right\} \label{eq:P_m}
\end{align}
we have
\begin{enumerate}[label=(b.\arabic*)]
\item \label{enu:I_as_unionL_P}$I=\bigcup_{P\in\N}\left[r\le\rho^{-P}\right]$;
\item \label{enu:PgeP_m}$\forall P\ge P_{\text{m}}:\ \left[r\le\rho^{-P}\right]\subzero I,\ r\le_{L_{P}}\diff\rho^{-P}$;
\item \label{enu:P<P_m}$\forall P<P_{\text{m}}:\ \diff\rho^{-P}\le r$;
\item \label{enu:P_m=00003D0}If $P_{\text{m}}=0$ and $L_{0}^{c}\subzero I$,
then $1\le_{L_{0}^{c}}r$; if $L_{0}^{c}\not\subzero I$, then $r\le1$.
\end{enumerate}
\end{enumerate}
\item \label{enu:radConvDichotomy}Assume that for all $L\subzero I$, the
following implication holds
\begin{equation}
\left(\exists Q\in\N:\ r\le_{L}\diff\rho^{-Q}\right)\text{ or }\left(\forall Q\in\N:\ r>_{L}\diff\rho^{-Q}\right)\ \Rightarrow\ \forall^{0}\eps\in L:\ \mathcal{P}\left\{ r_{\eps}\right\} .\label{eq:implication}
\end{equation}
Then $\forall^{0}\eps:\ \mathcal{P}\left\{ r_{\eps}\right\} $, i.e.~the
property $\mathcal{P}\left\{ r_{\eps}\right\} $ holds for all sufficiently
small $\eps$.
\item \label{enu:radConvModLe}If $q\in\rcrho$ and $q<r$, then $\exists s\in\rti:\ q<s\le r$.
\end{enumerate}
\end{thm}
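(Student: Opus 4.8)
The plan is to prove the five assertions in turn, using throughout the representative $r_\eps$ from \eqref{eq:r_eps} together with the observation that every claim is invariant under $\sim_\rho$ (by Lem.~\ref{lem:radConvWellDef}) and under altering a net on a set of $\eps$ bounded away from $0$; this lets me pass to a finite representative of $r$ whenever convenient. For the positivity I would unwind weak $\rho$-moderateness \eqref{eq:weakMod}: from $|a_{n\eps}|\le\rho_\eps^{-nQ-R}$ I take $n$-th roots to get $|a_{n\eps}|^{1/n}\le\rho_\eps^{-Q-R/n}$ and let $n\to+\infty$, obtaining $\limsup_{n}|a_{n\eps}|^{1/n}\le\rho_\eps^{-Q}$ and hence $r_\eps\ge\rho_\eps^{Q}$ for $\eps$ small; therefore $r\ge\diff\rho^{Q}>0$. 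For the dichotomy $r<+\infty$ or $r\sbpt{=}+\infty$ I would split on the set $A:=\{\eps\mid r_\eps=+\infty\}$: either $A\not\subzero I$, in which case $r_\eps<+\infty$ for $\eps$ small and $r<+\infty$; or $A\subzero I$, in which case reading off the restriction to the subpoint $A$ gives $r\sbpt{=}+\infty$.

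For the alternatives under $r<+\infty$, I would first fix a finite representative and record two bookkeeping facts about $L_P=\{\eps\mid r_\eps\le\rho_\eps^{-P}\}$: it is increasing in $P$ (since $\rho_\eps<1$ for $\eps$ small), and $\bigcup_{P}L_P=I$, because each finite $r_\eps$ is dominated by some $\rho_\eps^{-P}$. The split (a)/(b) is then governed entirely by whether some $L_P$ is cofinal to $0$. If every $L_P\not\subzero I$, then $r_\eps>\rho_\eps^{-P}$ eventually for each $P$, i.e.\ $r>\diff\rho^{-P}$ for all $P$, which is alternative (a). Otherwise $\{P\mid L_P\subzero I\}$ is a nonempty subset of $\N$, so $P_{\text{m}}$ in \eqref{eq:P_m} is well defined by well-ordering; monotonicity of $L_P$ gives $L_P\subzero I$ for all $P\ge P_{\text{m}}$, the defining inequality $r_\eps\le\rho_\eps^{-P}$ on $L_P$ gives $r\le_{L_P}\diff\rho^{-P}$, and minimality of $P_{\text{m}}$ gives $L_P\not\subzero I$, hence $\diff\rho^{-P}\le r$, for every $P<P_{\text{m}}$. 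The two $P_{\text{m}}=0$ statements are the same reasoning applied to $L_0^{c}=\{r_\eps>1\}$, distinguishing whether it is cofinal to $0$ or bounded away from it.

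The localization principle is the conceptual heart, and I would prove it by contraposition. Assuming $\neg(\forall^{0}\eps:\ \mathcal{P}\{r_\eps\})$, the set $B:=\{\eps\mid\neg\mathcal{P}\{r_\eps\}\}$ is cofinal to $0$, i.e.\ $B\subzero I$. I then run the dichotomy of the previous item, but intersected with $B$: either some $L_Q\cap B\subzero I$, and on $L:=L_Q\cap B$ the first disjunct of \eqref{eq:implication} holds (as $r\le_{L}\diff\rho^{-Q}$); or every $L_Q\cap B\not\subzero I$, and then $r>_{B}\diff\rho^{-Q}$ for every $Q$, so the second disjunct of \eqref{eq:implication} holds on $L:=B$. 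In both cases $L\subzero I$ and the antecedent of \eqref{eq:implication} is satisfied, so the hypothesis yields $\forall^{0}\eps\in L:\ \mathcal{P}\{r_\eps\}$; since $L\subseteq B$ is cofinal to $0$, this contradicts the failure of $\mathcal{P}\{r_\eps\}$ on $L$. Hence $B$ cannot be cofinal to $0$, which is the claim.

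Finally, for the interpolation of a moderate $s$, I would unwind the strict order $q<r$ (as defined in \cite{TiGi,MTAG}) to extract $m\in\N$ with $q_\eps+\rho_\eps^{m}\le r_\eps$ for $\eps$ small, and put $s:=q+\diff\rho^{m}$; then $s\in\rti$ because $q$ is moderate, $q<s$ because $s_\eps-q_\eps=\rho_\eps^{m}$, and $s\le r$ because $s_\eps\le r_\eps$ (also when $r_\eps=+\infty$). If the operative strict order only supplies the gap on a cofinal subpoint, I would instead take $s_\eps:=\min(r_\eps,q_\eps+\rho_\eps^{m})$, which is moderate, lies below $r$, and differs from $q$ precisely on that subpoint. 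I expect the main obstacle to be neither a single hard estimate nor the last item, but rather the disciplined bookkeeping of subpoints throughout — the monotonicity of the $L_P$, the passage between ``$\forall P\ \forall^{0}\eps$'' and ``$\forall^{0}\eps\ \forall P$'', and the legitimacy of choosing a finite representative — together with the realization that it is the covering dichotomy intersected with $B$ that powers the localization principle.
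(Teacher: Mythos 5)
Your proposal is correct and takes essentially the same route as the paper: the same $n$-th-root estimate from weak moderateness for positivity, the same split on $\{\eps\mid r_{\eps}=+\infty\}$ for (ii), the same covering/monotonicity bookkeeping of the sets $L_{P}$ for the alternatives (the strictness in alternative (a) coming for free from the universal quantifier, since $r\ge\diff\rho^{-(P+1)}$ on the relevant $\eps$ forces $r>\diff\rho^{-P}$), the same contradiction-on-a-cofinal-set localization for (iv), and an explicit gap witness $s=q+\diff\rho^{m}$ for (v) equivalent to the paper's $s=\min(q+1,r)$. The only cosmetic difference is in (iv), where you run the $L_{Q}\cap B$ dichotomy directly (which handles $r_{\eps}=+\infty$ uniformly), whereas the paper first separates the finite and infinite cases via (ii) restricted to the subpoint and then invokes (iii) there — same idea, marginally tidier bookkeeping on your side.
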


\begin{proof}
\ref{enu:radConvZero}: Assume that $\left|a_{n\eps}\right|\le\rho_{\eps}^{-nQ-R}$
for all $\eps\le\eps_{0}$ and for all $n\in\N$. Then $\limsup_{n\to+\infty}\left|a_{n\eps}\right|^{1/n}\le\lim_{n\to+\infty}\rho_{\eps}^{-Q-\frac{R}{n}}=\rho_{\eps}^{-Q}$,
i.e.~$r_{\eps}\ge\rho_{\eps}^{Q}$.

\ref{enu:radConvInfty}: Set $L:=\left\{ \eps\mid r_{\eps}=+\infty\right\} $.
If $L\subzero I$, then $r=_{L}+\infty$. Otherwise $(0,\eps_{0}]\cap L=\emptyset$
for some $\eps_{0}$, i.e.~$r_{\eps}<+\infty$ for all $\eps\le\eps_{0}$.

\ref{enu:radConvAlternatives}: Since we assume that $r<+\infty$,
without loss of generality we can take $r_{\eps}<+\infty$ for all
$\eps$. We also assume that \ref{enu:radConvAltNotMod} is false,
i.e.~$r\le_{M}\diff\rho^{-\bar{P}}$ for some $\bar{P}\in\N$ and
some $M\subzero I$. We first prove \ref{enu:I_as_unionL_P}: take
$\eps\in\bigcap_{P\in\N}\left[r>\rho^{-P}\right]$, then $r_{\eps}>\rho_{\eps}^{-P}$
for all $P\in\N$, so that $r_{\eps}=+\infty$ for $P\to+\infty$,
and this is not possible. We also note that $\left[r\le\rho^{-P}\right]\subseteq\left[r\le\rho^{-Q}\right]$
for all $Q\ge P$. From $M\subzero I$ and $r\le_{M}\diff\rho^{-\bar{P}}$,
we have $(0,\eps_{0}]\cap M\subseteq\left[r\le\rho^{-(\bar{P}+1)}\right]\subzero I$,
and hence definition \eqref{eq:P_m} yields $P_{\text{m}}\in\N$ and
also proves \ref{enu:PgeP_m}. For all $P\in\N_{<P_{\text{m}}}$,
we hence have $\left[r\le\rho^{-P}\right]\not\subzero I$, i.e.~$(0,\eps_{P}]\subseteq\left[r>\rho^{-P}\right]$
for some $\eps_{P}$. This implies $\diff\rho^{-P}\le r$ and proves
\ref{enu:P<P_m}. Finally, if $P_{\text{m}}=0$ and $L_{P_{\text{m}}}^{c}=L_{0}^{c}\subzero I$,
then $1\le_{L_{0}^{c}}r$ because $L_{0}^{c}=\left[r>1\right]$. If
$L_{0}^{c}\not\subzero I$, then $(0,\eps_{0}]\subseteq L_{0}$ for
some $\eps_{0}$, i.e.~$r\le1$.

\ref{enu:radConvDichotomy}: By contradiction, assume that $\neg\mathcal{P}\left\{ r_{\eps}\right\} $
for all $\eps\in L$ and for some $L\subzero I$. As usual, we assume
that all the results we proved for $\rhoext$ can also be similarly
proved for the restriction $\rhoext|_{L}$. From \ref{enu:radConvInfty}
for $\rhoext|_{L}$, we have $r<_{L}+\infty$ or $r=_{K}+\infty$
for some $K\subzero L$. The second case implies $r>_{L}\diff\rho^{-Q}$
for all $Q\in\N$. Since $K\subzero I$, we can apply the second alternative
in the implication \eqref{eq:implication} to get $\forall^{0}\eps\in K:\ \mathcal{P}\left\{ r_{\eps}\right\} $,
which gives a contradiction because $K\subseteq L$. We can hence
consider the first case $r<_{L}+\infty$ and apply the subcase \ref{enu:radConvAltNotMod},
i.e.~$r>_{L}\diff\rho^{-P}$ for all $P\in\N$, and we hence proceed
as above applying the second alternative of the implication \eqref{eq:implication}.
In the remaining subcase, we can use \ref{enu:PgeP_m} (with $L$
instead of $I$). This yields $L_{P_{\text{m}}}\subzero L$ and $r\le_{L_{P_{\text{m}}}}\diff\rho^{-P_{\text{m}}}$.
Since $L_{P_{\text{m}}}\subzero I$, we can apply the first alternative
in the implication \eqref{eq:implication} to get once again a contradiction.

\ref{enu:radConvModLe}: Assume that $r>q$ and take $s:=\min(q+1,r)\in\rcrho_{>0}$.
\end{proof}
\noindent Explicitly note the meaning of Lem.~\ref{thm:radConvTool}.\ref{enu:radConvDichotomy}:
on an arbitrary subpoint $r|_{L}$ of the radius of convergence $r=\radconv{a_{n}}$,
we have to consider only two cases: either $r|_{L}$ is $\rho$-moderate
or it is greater than any power $\diff\rho^{-Q}$ (the latter case
including also the case $r|_{L}=+\infty$); if in both cases we are
able to prove the property $\mathcal{P}\left\{ r_{\eps}\right\} $
for $\eps\in L$ sufficiently small, then this property holds for
\emph{all} $\eps$ sufficiently small.

\subsection{Set of convergence}

Even if the radius of convergence of the exponential hyperseries is
$\radconv{\frac{1}{n!}}=+\infty$, we have that $e^{x}=\hypersum{\rho}{\sigma}\frac{x^{n}}{n!}\in\rcrho$
implies $|x|\le\log\left(\diff\rho^{-R}\right)$ for some $R\in\N$:
in other words, the constraint to get a $\rho$-moderate number implies
that even if $\hypersum{\rho}{\sigma}\frac{x^{n}}{n!}$ converges
at $x$, the exponential HPS does \emph{not} converge in the interval
$[x,\radconv{\frac{1}{n!}})=[x,+\infty)\subseteq\rcrho$.

Moreover, in all our examples, if the HPS $\hypersum{\rho}{\sigma}a_{n}(x-c)^{n}\in\rcrho$
converges, then it converges exactly to $\left[\sum_{n=0}^{+\infty}a_{n\eps}(x_{\eps}-c_{\eps})^{n}\right]\in\rcrho$.
The following definition of \emph{set of convergence} closely recalls
the definition of GSF:
\begin{defn}
\label{def:setOfConv}Let $\left(a_{n}\right)_{\text{{\rm c}}}\in\rcrhoc$
and $c\in\rcrho$. The set of convergence
\[
\setconv{a_{n}}{c}
\]
is the set of all $x\in\rcrho$ satisfying
\begin{enumerate}
\item \label{enu:ConvRadConv}$\left|x-c\right|<\radconv{a_{n}}$,
\end{enumerate}
and such that there exist representatives $[x_{\eps}]=x$, $\left[a_{n\eps}\right]_{\text{{\rm c}}}=\left(a_{n}\right)_{\text{{\rm c}}}$
and $[c_{\eps}]=c$ satisfying the following conditions:
\begin{enumerate}[resume]
\item \label{enu:ConvFHPS}$\left[a_{n\eps}\cdot\left(x_{\eps}-c_{\eps}\right)^{n}\right]_{\text{{\rm s}}}\in\hps{x-c}$,
i.e.~we have a formal HPS;
\item \label{enu:ConvConv}$\hypersum{\rho}{\sigma}a_{n}(x-c)^{n}=\left[\sum_{n=0}^{+\infty}a_{n\eps}(x_{\eps}-c_{\eps})^{n}\right]\in\rcrho$;
\item \label{enu:ConvDerMod}For all representatives $[\bar{x}_{\eps}]=x$
and all $k\in\N_{>0}$, the $k$-th derivative net is $\rho$-moderate:
\[
\left(\frac{\diff{}^{k}}{\diff{x}^{k}}\left(\sum_{n=0}^{+\infty}a_{n\eps}(x-c_{\eps})^{n}\right)_{x=\bar{x}_{\eps}}\right)\in\R_{\rho}.
\]
\end{enumerate}
\end{defn}

\noindent Note that condition \ref{enu:ConvFHPS} is necessary because
in \ref{enu:ConvConv} we use a HPS; on the other hand, conditions
\ref{enu:ConvConv} and \ref{enu:ConvDerMod} state that the function
\[
x\in\setconv{a_{n}}{c}\mapsto\hypersum{\rho}{\sigma}a_{n}(x-c)^{n}\in\rti
\]
is a GSF defined by the net of smooth functions $\left(\sum_{n=0}^{+\infty}a_{n\eps}(x_{\eps}-c_{\eps})^{n}\right)$.
As for GSF, see \cite[Thm.~16]{TI}, condition \ref{enu:ConvDerMod}
will be useful to prove that we have independence from representatives
of $x$ in all the derivatives. In Cor.~\ref{cor:setConv_sigma<=00003Drho*},
we will see that under very general assumptions and if $\sigma\le\rho^{*}$,
condition \ref{enu:ConvDerMod} can be omitted.

In Sec.~\ref{exa:Exponential} we will show that $\log\left(\diff\rho^{-1}\right)\in\setconv{\frac{1}{n!}}{0}$
(the set of convergence of the exponential HPS at the origin), but
$\diff\rho^{-1}\notin\text{conv}\left(\left(\frac{1}{n!}\right)_{n}^{\text{{\rm c}}},0\right)$.
We immediately note that $x\in\text{conv}\left(\left(a_{n}\right)_{n}^{\text{c}},c\right)$
if and only if $x-c\in\text{conv}\left(\left(a_{n}\right)_{n}^{\text{c}},0\right)$,
and because of this property without loss of generality we will frequently
assume $c=0$.

We also note that condition \ref{enu:ConvConv} states that the hyperseries
$\hypersum{\rho}{\sigma}a_{n}(x-c)^{n}$ converges, and it does exactly
to the generalized number $\left[\sum_{n=0}^{+\infty}a_{n\eps}(x_{\eps}-c_{\eps})^{n}\right]$.
It is hence natural to wonder whether it is possible that it converges
to some different quantity. This is the problem of the relation between
hyperlimit and $\eps$-wise limit:
\[
\hyperlimarg{\rho}{\sigma}{N}\left[\sum_{n=0}^{\nint{(N)}_{\eps}}a_{n\eps}(x_{\eps}-c_{\eps})^{n}\right],\ \lim_{N\to+\infty}\sum_{n=0}^{N}a_{n\eps}(x_{\eps}-c_{\eps})^{n},
\]
which has been already addressed in \cite[Thm.~12, Thm.~13]{TiGi}.
Intuitively speaking, if the gauge $(\sigma_{\eps})$ is not sufficiently
small, and hence the infinite nets $(\sigma_{\eps}^{-N})$ are not
sufficiently large, it can happen that $\nint{(N)}_{\eps}\to+\infty$
as $\eps\to0$ only very slowly, whereas the $\eps$-wise limit could
require $N\to+\infty$ at a greater speed to converge. This can be
stated more precisely in the following way: Let $\left[a_{n\eps}\cdot\left(x_{\eps}-c_{\eps}\right)^{n}\right]_{\text{{\rm s}}}\in\hps{x-c}$
be a formal HPS and assume that $\sum_{n=0}^{+\infty}a_{n\eps}(x_{\eps}-c_{\eps})^{n}<+\infty$
for $\eps$ small. Then, for all $q\in\N$ and for all $\eps$ small,
we can find $N_{\eps}^{q}\in\N$ such that
\[
\left|\sum_{n=0}^{N_{\eps}^{q}}a_{n\eps}(x_{\eps}-c_{\eps})^{n}-\sum_{n=0}^{+\infty}a_{n\eps}(x_{\eps}-c_{\eps})^{n}\right|\le\rho_{\eps}^{q}\quad\forall n\in\N_{\ge N_{\eps}^{q}}.
\]
However, only if $\left(\sum_{n=0}^{+\infty}a_{n\eps}(x_{\eps}-c_{\eps})^{n}\right)\in\R_{\rho}$
and $(N_{\eps}^{q})\in\R_{\sigma}$, i.e.~$\left[N_{\eps}^{q}\right]\in\hypNs$,
then this also implies $\hypersum{\rho}{\sigma}a_{n}(x-c)^{n}=\left[\sum_{n=0}^{+\infty}a_{n\eps}(x_{\eps}-c_{\eps})^{n}\right]$.

As expected, for HPS the set of convergence is never a singleton:
\begin{thm}
\label{thm:setConvNotTrivial}Let $\left(a_{n}\right)_{\text{{\rm c}}}\in\rcrhoc$
and $c\in\rcrho$. Then
\begin{equation}
\exists q\in\N:\ (c-\diff\rho^{q},c+\diff\rho^{q})\subseteq\setconv{a_{n}}{c}.\label{eq:setConvNotTrivial}
\end{equation}
\end{thm}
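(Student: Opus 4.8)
The plan is to use the weak $\rho$-moderateness of the coefficients to majorise the series, on the infinitesimal interval $(c-\diff\rho^{q},c+\diff\rho^{q})$, by a geometric series that converges uniformly in the number of summands. First I would fix a weakly $\rho$-moderate representative $(a_{n\eps})_{n,\eps}$ of $(a_{n})_{\text{c}}$, so that by \eqref{eq:weakMod} there are $Q,R\in\N$ with $|a_{n\eps}|\le\rho_{\eps}^{-nQ-R}$ for all $n\in\N$ and all $\eps\le\eps_{0}$. I claim that \eqref{eq:setConvNotTrivial} holds with $q:=Q+2$. Indeed, let $x$ satisfy $|x-c|<\diff\rho^{q}$; then I would choose representatives $[x_{\eps}]=x$, $[c_{\eps}]=c$ with $|x_{\eps}-c_{\eps}|\le\rho_{\eps}^{q}$ for $\eps$ small (possible since $|x-c|<\diff\rho^{q}$) together with the representative $(a_{n\eps})$ above. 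The basic estimate is then
\[
\left|a_{n\eps}(x_{\eps}-c_{\eps})^{n}\right|\le\rho_{\eps}^{-nQ-R}\rho_{\eps}^{qn}=\rho_{\eps}^{-R}\rho_{\eps}^{n(q-Q)}=\rho_{\eps}^{-R}\rho_{\eps}^{2n}\qquad(\eps\le\eps_{0}),
\]
a geometric bound uniform in $n$.

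With this in hand I would verify the four conditions of Definition~\ref{def:setOfConv}. Condition~\ref{enu:ConvRadConv} is immediate: the proof of Lem.~\ref{thm:radConvTool}.\ref{enu:radConvZero} gives $r_{\eps}\ge\rho_{\eps}^{Q}$, whence $|x-c|<\diff\rho^{q}<\diff\rho^{Q}\le r=\radconv{a_{n}}$. For condition~\ref{enu:ConvFHPS}, the basic estimate yields $\big|\sum_{n=0}^{N_{\eps}}a_{n\eps}(x_{\eps}-c_{\eps})^{n}\big|\le\rho_{\eps}^{-R}(1-\rho_{\eps}^{2})^{-1}\le2\rho_{\eps}^{-R}$ for \emph{every} $N=[N_{\eps}]\in\hypNs$ and $\eps$ small, so the $\sigma$-hyperfinite sums are $\rho$-moderate and $[a_{n\eps}(x_{\eps}-c_{\eps})^{n}]_{\text{s}}\in\rcrhos$ is a formal HPS. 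The same majorant shows that $\sum_{n=0}^{+\infty}a_{n\eps}(x_{\eps}-c_{\eps})^{n}$ converges absolutely with modulus $\le2\rho_{\eps}^{-R}$, hence defines an element of $\rcrho$; and the tail estimate $\sum_{n>N_{\eps}}|a_{n\eps}(x_{\eps}-c_{\eps})^{n}|\le2\rho_{\eps}^{-R}\rho_{\eps}^{2(N_{\eps}+1)}\le\rho_{\eps}^{N_{\eps}-R}$ shows that, for each $p\in\N$, the constant index $N^{p}:=p+R$ (which trivially satisfies $[N^{p}]\in\hypNs$ for any $\sigma$) makes the $\sigma$-hyperfinite partial sum $\rho_{\eps}^{p}$-close to the $\eps$-wise sum. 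By the criterion recalled just before the statement (see \cite[Thm.~12, Thm.~13]{TiGi}) this gives condition~\ref{enu:ConvConv}, i.e.\ $\hypersum{\rho}{\sigma}a_{n}(x-c)^{n}=\big[\sum_{n=0}^{+\infty}a_{n\eps}(x_{\eps}-c_{\eps})^{n}\big]$.

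The step I expect to be the main obstacle is condition~\ref{enu:ConvDerMod}, since it must hold for \emph{every} representative $[\bar{x}_{\eps}]=x$ and since differentiating $k$ times produces the factors $\frac{n!}{(n-k)!}\le n^{k}$, which grow polynomially in $n$. First, any representative satisfies $|\bar{x}_{\eps}-c_{\eps}|\le|\bar{x}_{\eps}-x_{\eps}|+|x_{\eps}-c_{\eps}|\le2\rho_{\eps}^{q}\le\rho_{\eps}^{q-1}$ for $\eps$ small. The $k$-th derivative of $\sum_{n}a_{n\eps}(x-c_{\eps})^{n}$ at $\bar{x}_{\eps}$ is $\sum_{n\ge k}a_{n\eps}\frac{n!}{(n-k)!}(\bar{x}_{\eps}-c_{\eps})^{n-k}$, so using the above bounds and $q-1-Q=1$, with $j=n-k$,
\[
\sum_{n\ge k}\rho_{\eps}^{-nQ-R}\,n^{k}\,\rho_{\eps}^{(q-1)(n-k)}=\rho_{\eps}^{-kQ-R}\sum_{j\ge0}(j+k)^{k}\rho_{\eps}^{j}\le C_{k}2^{k+1}\rho_{\eps}^{-kQ-R}
\]
for $\eps$ small, since $\sum_{j\ge0}(j+k)^{k}\rho_{\eps}^{j}\le C_{k}(1-\rho_{\eps})^{-(k+1)}$. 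This is $\rho$-moderate for each fixed $k$, which is condition~\ref{enu:ConvDerMod}. Note that this is exactly where the choice $q=Q+2$, rather than merely $q>Q$, is used: it leaves a spare factor $\rho_{\eps}$ per term to dominate the polynomial growth $n^{k}$ once the representative-change slack (passing from $\rho_{\eps}^{q}$ to $\rho_{\eps}^{q-1}$) has been absorbed.

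Collecting the four conditions shows that $x\in\setconv{a_{n}}{c}$ whenever $|x-c|<\diff\rho^{q}$, which is precisely \eqref{eq:setConvNotTrivial}. I expect the only genuinely delicate points to be the uniform-in-$N$ geometric bound (needed so that the hyperlimit agrees with the $\eps$-wise limit for an \emph{arbitrary} gauge $\sigma$, with no relation to $\rho$) and the derivative estimate for condition~\ref{enu:ConvDerMod}; everything else reduces to the single majorant displayed above.
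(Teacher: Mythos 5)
Your proof is correct and follows essentially the same route as the paper's: pick $q$ just above the weak-moderateness exponent $Q$ from \eqref{eq:weakMod}, majorize all sums by a geometric series, and check the conditions of Def.~\ref{def:setOfConv} one by one (the paper takes $q=\max(Q+1,q_{1})$ and uses the exact binomial series $\sum_{n\ge k}{n \choose k}y^{n-k}=(1-y)^{-(k+1)}$ where you use the cruder bound $\frac{n!}{(n-k)!}\le n^{k}$ with a constant $C_{k}$). Your extra care in condition \ref{def:setOfConv}.\ref{enu:ConvDerMod} with arbitrary representatives $[\bar{x}_{\eps}]=x$ --- absorbing the slack via $2\rho_{\eps}^{q}\le\rho_{\eps}^{q-1}$, which is why you need $q=Q+2$ --- is a minor point the paper's proof glosses over, but it does not change the substance of the argument.
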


\begin{proof}
From Thm.~\ref{thm:radConvTool}.\ref{enu:radConvZero}, we have
$r:=\radconv{a_{n}}\ge\diff\rho^{q_{1}}$ for some $q_{1}\in\N$.
We also have $|a_{n\eps}|\le\rho_{\eps}^{-nQ-R}$ from \eqref{eq:weakMod}.
Assume that $|x-c|<\diff\rho^{q}$: we want to find $q\in\N_{\ge q_{1}}$
so that $x\in\setconv{a_{n}}{c}$. To prove property Def.~\ref{def:setOfConv}.\ref{enu:ConvFHPS},
for $N_{\eps}$, $M_{\eps}\in\N$ and for $\eps$ small, we estimate
\[
\left|\sum_{n=N_{\eps}}^{M_{\eps}}a_{n\eps}(x_{\eps}-c_{\eps})^{n}\right|\le\sum_{n=N_{\eps}}^{M_{\eps}}\rho_{\eps}^{-nQ-R}\rho_{\eps}^{nq}=\rho_{\eps}^{-R}\sum_{n=N_{\eps}}^{M_{\eps}}\rho_{\eps}^{-nQ+nq}.
\]
Therefore, taking $q=\max(1+Q,q_{1})$, we get
\[
\left|\sum_{n=N_{\eps}}^{M_{\eps}}a_{n\eps}(x_{\eps}-c_{\eps})^{n}\right|\le\rho_{\eps}^{-R}\sum_{n=N_{\eps}}^{M_{\eps}}\rho_{\eps}^{n}\le\frac{\rho_{\eps}^{-R}}{1-\rho_{\eps}},
\]
and this proves Def.~\ref{def:setOfConv}.\ref{enu:ConvFHPS}. Similarly,
we have
\begin{align*}
\left|\sum_{n=0}^{M_{\eps}}a_{n\eps}(x_{\eps}-c_{\eps})^{n}-\sum_{n=0}^{+\infty}a_{n\eps}(x_{\eps}-c_{\eps})^{n}\right| & \le\sum_{n=M_{\eps}+1}^{+\infty}\rho_{\eps}^{n}\\
 & \le\frac{\rho_{\eps}^{M_{\eps}+1}}{1-\rho_{\eps}}.
\end{align*}
Since $\hyperlimarg{\rho}{\sigma}{M}\diff\rho^{M+1}=0$, this proves
Def.~\ref{def:setOfConv}.\ref{enu:ConvConv}. Finally, for all $k\in\N_{>0}$
and all representatives $[\bar{x}_{\eps}]=x$, we have
\begin{align}
\frac{\diff{}^{k}}{\diff{x}^{k}}\left(\sum_{n=0}^{+\infty}a_{n\eps}(x-c_{\eps})^{n}\right)_{x=\bar{x}_{\eps}} & =\sum_{n=k}^{+\infty}a_{n\eps}(\bar{x}_{\eps}-c_{\eps})^{n-k}\prod_{j=0}^{k-1}(n-j)\label{eq:der_k}\\
 & =k!\sum_{n=k}^{+\infty}a_{n\eps}(\bar{x}_{\eps}-c_{\eps})^{n-k}{n \choose k},
\end{align}
and hence
\begin{align*}
\left|\frac{\diff{}^{k}}{\diff{x}^{k}}\left(\sum_{n=0}^{+\infty}a_{n\eps}(\bar{x}_{\eps}-c_{\eps})^{n}\right)\right| & \le\sum_{n=k}^{+\infty}\rho_{\eps}^{-nQ-R}\rho_{\eps}^{(n-k)q}\prod_{j=0}^{k-1}(n-j)\\
 & =\rho_{\eps}^{-R-kQ}\sum_{n=k}^{+\infty}\rho_{\eps}^{(q-Q)(n-k)}\prod_{j=0}^{k-1}(n-j)\\
 & =\rho_{\eps}^{-R-kQ}\frac{k!}{(1-\rho_{\eps}^{q-Q})^{k+1}}\in\R_{\rho}.
\end{align*}
In the last step we used $q\ge Q+1$ and the binomial series $\sum_{n=k}^{+\infty}y^{n-k}\prod_{j=0}^{k-1}(n-j)=k!\sum_{n=k}^{+\infty}{n \choose k}y^{n-k}=\frac{k!}{(1-y)^{k+1}}$
for $|y|<1$.
\end{proof}
We can now prove independence from representatives both in Def.~\ref{def:setOfConv}
and in Def.~\ref{def:formalHps}:
\begin{lem}
\label{lem:indepRepr}Let $\left(a_{n}\right)_{\text{{\rm c}}}=\left[a_{n\eps}\right]_{\text{{\rm c}}}=\left[\bar{a}_{n\eps}\right]_{\text{{\rm c}}}\in\rcrhoc$,
$x=[x_{\eps}]=[\bar{x}_{\eps}]$, $c=[c_{\eps}]=[\bar{c}_{\eps}]\in\rcrho$.
Assume that $x\in\setconv{a_{n}}{c}$. Then
\begin{enumerate}
\item \label{enu:series}The nets $\left(a_{n\eps}\right)_{n,\eps}$, $(x_{\eps})$
and $(c_{\eps})$ also satisfy all the conditions of Def.~\ref{def:setOfConv}
of set of convergence.
\item \label{enu:indepRepr}$\left[a_{n\eps}\cdot\left(x_{\eps}-c_{\eps}\right)^{n}\right]_{\text{{\rm s}}}=\left[\bar{a}_{n\eps}\cdot\left(\bar{x}_{\eps}-\bar{c}_{\eps}\right)^{n}\right]_{\text{{\rm s}}}$,
where the equality is in $\rcrhos$.
\end{enumerate}
\end{lem}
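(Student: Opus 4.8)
The plan is to work with the \emph{witness} representatives $(a'_{n\eps})$, $(x'_{\eps})$, $(c'_{\eps})$ produced by the hypothesis $x\in\setconv{a_{n}}{c}$, for which Def.~\ref{def:setOfConv}.\ref{enu:ConvFHPS}--\ref{enu:ConvDerMod} hold, and to show that every other system inherits these properties. Condition \ref{enu:ConvRadConv} is representative-free by Lem.~\ref{lem:radConvWellDef}, so the whole statement reduces to a single estimate: for an arbitrary system $(a_{n\eps})\simeq_{\rho}(a'_{n\eps})$, $(x_{\eps})\sim_{\rho}(x'_{\eps})$, $(c_{\eps})\sim_{\rho}(c'_{\eps})$, writing $z_{\eps}:=x_{\eps}-c_{\eps}$ and $z'_{\eps}:=x'_{\eps}-c'_{\eps}$ (both representing $x-c$, hence $(z_{\eps})\sim_{\rho}(z'_{\eps})$), for every $N=[N_{\eps}]\in\hypNs$ the difference of hyperfinite partial sums $\Delta_{\eps}:=\sum_{n=0}^{N_{\eps}}\left(a_{n\eps}z_{\eps}^{n}-a'_{n\eps}(z'_{\eps})^{n}\right)$ is $\rho$-negligible. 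Applying this with the arbitrary system taken to be, in turn, the unbarred and the barred representatives of the statement, all three partial-sum nets agree up to negligible terms; this is precisely part \ref{enu:indepRepr} (equality of the barred and unbarred formal HPS in $\rcrhos$) and, since a negligible net is moderate, it transfers the formal-HPS condition \ref{enu:ConvFHPS}. The remaining conditions \ref{enu:ConvConv} and \ref{enu:ConvDerMod} (i.e.\ part \ref{enu:series}) are dealt with at the end.

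For the estimate I split $\Delta_{\eps}=\sum_{n=0}^{N_{\eps}}(a_{n\eps}-a'_{n\eps})z_{\eps}^{n}+\sum_{n=0}^{N_{\eps}}a'_{n\eps}(z_{\eps}^{n}-(z'_{\eps})^{n})$, deliberately leaving the \emph{witness} coefficients $a'_{n\eps}$ on the second, argument-variation term. The first term is routine: strong $\rho$-equivalence gives $|a_{n\eps}-a'_{n\eps}|\le\rho_{\eps}^{nq+r}$ for all $q,r$, while $|z_{\eps}|\le\rho_{\eps}^{-M}$ by moderateness, so with $q:=M+1$ the term is dominated by the convergent geometric series $\rho_{\eps}^{r}\sum_{n}\rho_{\eps}^{n}\le\rho_{\eps}^{r}/(1-\rho_{\eps})$, a bound independent of $N_{\eps}$ and $\rho$-negligible as $r\to+\infty$; no relation between $\rho$ and $\sigma$ enters here.

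The argument-variation term is the crux. Setting $p_{\eps}(t):=\sum_{n=0}^{N_{\eps}}a'_{n\eps}t^{n}$ and applying the mean value theorem, it equals $p_{\eps}'(\xi_{\eps})\,(z_{\eps}-z'_{\eps})$ for some $\xi_{\eps}$ between $z_{\eps}$ and $z'_{\eps}$; as $z_{\eps}-z'_{\eps}$ is $\rho$-negligible to every order, everything reduces to the \emph{uniform} moderateness (in $N_{\eps}$) of $p_{\eps}'(\xi_{\eps})=\sum_{n=1}^{N_{\eps}}n\,a'_{n\eps}\xi_{\eps}^{n-1}$. Here the naive bound from weak moderateness, $\sum_{n}n|a'_{n\eps}||\xi_{\eps}|^{n-1}$, is already useless once $|\xi_{\eps}|$ exceeds the guaranteed radius $\diff\rho^{Q}$ of Thm.~\ref{thm:radConvTool}.\ref{enu:radConvZero}, so genuine cancellation is unavoidable, and this is exactly what condition \ref{enu:ConvDerMod} supplies: since $\xi_{\eps}$ is $\rho$-close to $z_{\eps}$ we have $[\xi_{\eps}]=x-c$, so $\xi_{\eps}+c'_{\eps}$ represents $x$ and the \emph{full} signed derivative $\sum_{n\ge1}n\,a'_{n\eps}\xi_{\eps}^{n-1}$ is $\rho$-moderate by \ref{enu:ConvDerMod} with $k=1$. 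It then remains to control the tail $\sum_{n>N_{\eps}}n\,a'_{n\eps}\xi_{\eps}^{n-1}$: condition \ref{enu:ConvRadConv} together with Lem.~\ref{thm:radConvTool}.\ref{enu:radConvModLe} yields a moderate $s$ with $|x-c|<s\le\radconv{a_{n}}$, hence a $\rho$-power gap $|\xi_{\eps}|\le r_{\eps}-\diff\rho^{m}$ below the $\eps$-wise radius, and invoking the dichotomy Thm.~\ref{thm:radConvTool}.\ref{enu:radConvDichotomy} (splitting into $r$ moderate and $r$ exceeding every power) together with the relation between the hyperlimit and the $\eps$-wise limit recalled before Thm.~\ref{thm:setConvNotTrivial} (see \cite{TiGi}) makes the tail beyond the $\sigma$-hyperfinite index $N_{\eps}$ $\rho$-negligible. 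This is the step I expect to be the main obstacle, precisely because it is where the interplay of $\sigma$ and $\rho$ is irreducible. Combining the two terms proves the estimate, hence part \ref{enu:indepRepr} and the transfer of \ref{enu:ConvFHPS}.

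Finally I transfer \ref{enu:ConvConv} and \ref{enu:ConvDerMod}. For \ref{enu:ConvConv} I rerun the same splitting with the hyperfinite cutoff replaced by the full $\eps$-wise tail: the gap $|z_{\eps}|<s_{\eps}\le r_{\eps}$ gives convergence of $\sum_{n}a_{n\eps}z_{\eps}^{n}$, and the two $\eps$-wise sums differ by a negligible amount, so the barred sum again equals $\hypersum{\rho}{\sigma}a_{n}(x-c)^{n}$. For \ref{enu:ConvDerMod} at order $k$ and an arbitrary representative of $x$, I repeat the coefficient/argument splitting on the $k$-th derivative series, whose coefficients $\prod_{j<k}(n-j)\,a_{n\eps}$ are still weakly $\rho$-moderate since the polynomial factor is absorbed by strong $\rho$-equivalence; the argument-variation part now calls the mean value theorem and condition \ref{enu:ConvDerMod} at order $k+1$ for the witness representatives. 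As \ref{enu:ConvDerMod} is assumed for all $k$, there is no genuine recursion, and the tail is handled exactly as above.
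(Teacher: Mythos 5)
Your proposal follows the paper's proof in all structural respects: you pass to the witness representatives provided by $x\in\setconv{a_{n}}{c}$, you use the very same decomposition (coefficient variation evaluated at the new argument, plus argument variation carrying the witness coefficients, cf.~\eqref{eq:a_hat_a}), you dispose of the coefficient term by strong $\rho$-equivalence \eqref{eq:strongEq} against a geometric series (your $|z_{\eps}|\le\rho_{\eps}^{-M}$, $q=M+1$ plays the role of the paper's moderate bound $s\le\radconv{a_{n}}$ with $\rho_{\eps}^{p}s_{\eps}<1$), and you treat the argument term by the mean value theorem together with Def.~\ref{def:setOfConv}.\ref{enu:ConvDerMod} applied at the representative $\xi_{\eps}+c'_{\eps}$ of $x$. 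Also the way you transfer Def.~\ref{def:setOfConv}.\ref{enu:ConvFHPS}, \ref{enu:ConvConv} and \ref{enu:ConvDerMod} is the paper's; the paper merely handles all orders $k$ simultaneously by carrying the weights $\binom{n}{k}$ through one single estimate, instead of re-running the splitting for each $k$ as you do, which is a cosmetic difference.

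The one place where you depart from the paper is the added ``tail'' step, and that is where your argument breaks. What you need is a bound $\bigl|\sum_{n=1}^{N_{\eps}}n\,a'_{n\eps}\xi_{\eps}^{n-1}\bigr|\le\rho_{\eps}^{-A}$ uniform over the hyperfinite cut-offs; since Def.~\ref{def:setOfConv}.\ref{enu:ConvDerMod} makes the \emph{full} series moderate, this is equivalent to moderateness of the tail $\sum_{n>N_{\eps}}n\,a'_{n\eps}\xi_{\eps}^{n-1}$, so ``it remains to control the tail'' is not a reduction but a restatement of the claim --- the detour is circular. Moreover, the tools you invoke cannot deliver it: Thm.~\ref{thm:radConvTool}.\ref{enu:radConvDichotomy} only organizes a case distinction on subpoints of $r$, and the hyperlimit/$\eps$-wise-limit discussion before Thm.~\ref{thm:setConvNotTrivial} points in the \emph{opposite} direction: it is precisely the warning that an $\eps$-wise convergent series may become small only at indices far beyond every $\sigma$-moderate one, so there is no reason the tail past a $\sigma$-hyperfinite index is negligible (or even moderate); quantitatively, sitting below the $\eps$-wise radius gives decay like $\left(|\xi_{\eps}|/r_{\eps}\right)^{n}$, and when $r_{\eps}$ is large compared with the gap this decay sets in only after non-$\sigma$-moderate many terms. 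For comparison, the paper makes no tail estimate at all here: in \eqref{eq:meanV} it multiplies the hyperfinite derivative sum by the negligible factor $\hat{z}_{\eps}$ and concludes negligibility directly from Def.~\ref{def:setOfConv}.\ref{enu:ConvDerMod}. So your instinct that the passage from the full derivative series to its hyperfinite pieces deserves justification is sound --- this is exactly the tersest point of the paper's proof --- but the patch you propose rests on a false principle and proves nothing beyond what it assumes; as written, this step of your proof does not go through.
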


\begin{proof}
\ref{enu:series}: Since we have similar steps for several claims,
let $N_{\eps}\in\N$ and $M_{\eps}\in\N\cup\{+\infty\}$, so that
a term of the form $\sum_{n=N_{\eps}}^{M_{\eps}}b_{n\eps}$ represents
both the ordinary series $\sum_{n=0}^{+\infty}b_{n\eps}$ or the finite
sum $\sum_{n=N_{\eps}}^{M_{\eps}}b_{n\eps}$. From Def.~\ref{def:setOfConv}
of set of convergence, we get the existence of representatives $[\hat{x}_{\eps}]=x\in\rcrho$,
$\left[\hat{a}_{n\eps}\right]_{\text{{\rm c}}}=\left(a_{n}\right)_{n}^{\text{{\rm c}}}$
and $[\hat{c}_{\eps}]=c$ satisfying Def.~\ref{def:setOfConv}. Set
$\hat{y}_{\eps}:=\hat{x}_{\eps}-\hat{c}_{\eps}$, $\hat{y}:=[\hat{y}_{\eps}]$.
Let $r:=[r_{\eps}]:=\radconv{a_{n}}$ be the radius of convergence.
From Lem.~\ref{thm:radConvTool}.\ref{enu:radConvModLe}, take $s\in\rcrho$
satisfying $|\hat{y}|<s\le r$ and a representative $[s_{\eps}]=s$
such that $|\hat{y}_{\eps}|<s_{\eps}\le r_{\eps}$ for all $\eps$
small. Set $z_{n\eps}:=a_{n\eps}-\hat{a}_{n\eps}$ and $\hat{z}_{\eps}:=y_{\eps}-\hat{y}_{\eps}$.
For all $k\in\N$, we have
\begin{align}
\sum_{n=N_{\eps}}^{M_{\eps}}a_{n\eps}y_{\eps}^{n-k}{n \choose k} & =\sum_{n=N_{\eps}}^{M_{\eps}}\left(\hat{a}_{n\eps}+z_{n\eps}\right)\left(\hat{y}_{\eps}+\hat{z}_{\eps}\right)^{n-k}{n \choose k}\nonumber \\
 & =\sum_{n=N_{\eps}}^{M_{\eps}}\hat{a}_{n\eps}\left(\hat{y}_{\eps}+\hat{z}_{\eps}\right)^{n-k}{n \choose k}+\sum_{n=N_{\eps}}^{M_{\eps}}z_{n\eps}\left(\hat{y}_{\eps}+\hat{z}_{\eps}\right)^{n-k}{n \choose k}.\label{eq:a_hat_a}
\end{align}
Since $\hat{z}=0$ , we also have $|\hat{y}_{\eps}|+|\hat{z}_{\eps}|<s_{\eps}\le r_{\eps}$
for all $\eps$ small. For the same $\eps$, assume that $|z_{n\eps}|\le\rho_{\eps}^{np+q}$
for fixed arbitrary $p$, $q\in\N$. We first consider the second
summand in \eqref{eq:a_hat_a}:

\begin{multline*}
\left|\sum_{n=N_{\eps}}^{M_{\eps}}z_{n\eps}\left(\hat{y}_{\eps}+\hat{z}_{\eps}\right)^{n-k}{n \choose k}\right|\le\sum_{n=N_{\eps}}^{M_{\eps}}\rho_{\eps}^{np+q}s_{\eps}^{n-k}{n \choose k}=\rho_{\eps}^{q+kp}\sum_{n=N_{\eps}}^{M_{\eps}}\left(\rho_{\eps}^{p}s_{\eps}\right)^{n-k}{n \choose k}\\
\le\rho_{\eps}^{q+kp}\sum_{n=k}^{+\infty}\left(\rho_{\eps}^{p}s_{\eps}\right)^{n-k}{n \choose k}-\rho_{\eps}^{q+kp}\sum_{n=k}^{N_{\eps}-1}\left(\rho_{\eps}^{p}s_{\eps}\right)^{n-k}{n \choose k}\\
\le2\rho_{\eps}^{q+kp}\sum_{n=k}^{+\infty}\left(\rho_{\eps}^{p}s_{\eps}\right)^{n-k}{n \choose k}.
\end{multline*}
Since $s\in\rcrho$, we can take $p\in\N$ sufficiently large so that
$\rho_{\eps}^{p}s_{\eps}<1$. This implies
\[
\left|\sum_{n=N_{\eps}}^{M_{\eps}}z_{n\eps}\left(\hat{y}_{\eps}+\hat{z}_{\eps}\right)^{n-k}{n \choose k}\right|\le\frac{2\rho_{\eps}^{q+kp}}{\left(1-\rho_{\eps}^{p}s_{\eps}\right)^{k+1}}.
\]
Thereby, for $q\to+\infty$, this summand defines a negligible net.
For the first summand of \eqref{eq:a_hat_a}, we can use the mean
value theorem to get
\begin{multline}
\left|\sum_{n=N_{\eps}}^{M_{\eps}}\hat{a}_{n\eps}\left(\hat{y}_{\eps}+\hat{z}_{\eps}\right)^{n-k}{n \choose k}-\sum_{n=N_{\eps}}^{M_{\eps}}\hat{a}_{n\eps}\hat{y}_{\eps}^{n-k}{n \choose k}\right|\\
\le\left|\sum_{n=N_{\eps}}^{M_{\eps}}\hat{a}_{n\eps}(n-k)\xi_{\eps}^{n-k-1}{n \choose k}\hat{z}_{\eps}\right|=\left|\hat{z}_{\eps}\right|\left|\sum_{n=N_{\eps}}^{M_{\eps}}\hat{a}_{n\eps}(n-k)\xi_{\eps}^{n-k-1}{n \choose k}\right|\label{eq:meanV}
\end{multline}
for some $\xi_{\eps}\in[\hat{y}_{\eps},\hat{y}_{\eps}+\hat{z}_{\eps}]\cup[\hat{y}_{\eps}+\hat{z}_{\eps},\hat{y}_{\eps}]$.
Thereby, the right hand side of \eqref{eq:meanV} is negligible because
of Def.~\ref{def:setOfConv}.\ref{enu:ConvDerMod}.

\noindent We can hence state that for all $k\in\N$
\begin{equation}
\left(\sum_{n=N_{\eps}}^{M_{\eps}}a_{n\eps}\left(y_{\eps}+z_{\eps}\right)^{n-k}{n \choose k}\right)\sim_{\rho}\left(\sum_{n=N_{\eps}}^{M_{\eps}}\hat{a}_{n\eps}\hat{y}_{\eps}^{n-k}{n \choose k}\right).\label{eq:a-hat-a_equiv}
\end{equation}
In the case $M_{\eps}<+\infty$ for all $\eps$ and $k=0$, this proves
that $\left[a_{n\eps}\cdot y_{\eps}^{n}\right]_{\text{{\rm s}}}\in\rcrhos$
because $(\hat{a}_{n\eps})_{n,\eps}$ and $(\hat{y}_{\eps})$ satisfy
Def.~\ref{def:setOfConv}.\ref{enu:ConvFHPS}. In the case $M_{\eps}=+\infty$
and $N_{\eps}=0=k$, it also proves the moderateness of $\left(\sum_{n=0}^{+\infty}a_{n\eps}y_{\eps}^{n}\right)$,
i.e.~the implicit moderateness requirement of Def.~\ref{def:setOfConv}.\ref{enu:ConvConv}.
Finally, for $k>0$, property \eqref{eq:a-hat-a_equiv} also shows
that Def.~\ref{def:setOfConv}.\ref{enu:ConvDerMod} also holds for
$(a_{n\eps})$ and $(y_{\eps})$ because of \eqref{eq:der_k}. We
can also apply \eqref{eq:a-hat-a_equiv} with $k=0$ to $\left[\bar{a}_{n\eps}\right]_{\text{{\rm c}}}=\left(a_{n}\right)_{\text{{\rm c}}}$,
$[\bar{x}_{\eps}]=x$, $[\bar{c}_{\eps}]=c$, $\bar{y}_{\eps}:=\bar{x}_{\eps}-\bar{c}_{\eps}$
and with $M_{\eps}<+\infty$, to get
\[
\left(\sum_{n=N_{\eps}}^{M_{\eps}}a_{n\eps}y_{\eps}^{n}\right)\sim_{\rho}\left(\sum_{n=N_{\eps}}^{M_{\eps}}\hat{a}_{n\eps}\hat{y}_{\eps}^{n}\right)\sim_{\rho}\left(\sum_{n=N_{\eps}}^{M_{\eps}}\bar{a}_{n\eps}\bar{y}_{\eps}^{n}\right).
\]
This proves claim \ref{enu:indepRepr} and hence also Def.~\ref{def:setOfConv}.\ref{enu:ConvConv}
because $\hypersum{\rho}{\sigma}\left[\hat{a}_{n\eps}\right]\cdot\left[\hat{y}_{\eps}\right]^{n}$
converges to $\left[\sum_{n=0}^{+\infty}\hat{a}_{n\eps}\hat{y}_{\eps}^{n}\right]=\left[\sum_{n=0}^{+\infty}a_{n\eps}y_{\eps}^{n}\right]=\left[\sum_{n=0}^{+\infty}\bar{a}_{n\eps}\bar{y}_{\eps}^{n}\right]\in\rcrho$
from \eqref{eq:a-hat-a_equiv}.
\end{proof}

\subsection{\label{subsec:Examples}Examples}

We start studying geometric hyperseries, which in general are convergent
HPS if $\sigma\le\rho^{*}$:
\begin{example}[Geometric hyperseries]
\label{exa:Geometric-series}Assume that $x\in(-1,1)\subseteq\rti$.
We have:

\begin{equation}
\left[\left|\sum_{n=0}^{N_{\eps}}x_{\eps}^{n}\right|\right]\leq\left[\left|\frac{1-|x_{\eps}^{N_{\eps}+1}|}{1-x_{\eps}}\right|\right]\le\frac{2}{1-x}\in\rcrho.\label{eq:geomModer}
\end{equation}
This shows that $(x^{n})_{n}=[x_{\eps}^{n}]\in\rcrhos$ for all gauges
$\rho$, $\sigma$. Hence by Def.~\ref{def:formalHps}, $[x_{\eps}^{n}]{}_{\text{{\rm s}}}\in\hps{x}$,
i.e.~the geometric series is a formal hyper-series. Since coefficients
$a_{n\eps}=1$, we have, $[a_{n\eps}]_{{\rm c}}\in\rcrhoc$ (see Def.~\ref{def:radCon}.\ref{enu:coeffHPS}).
Now, by Def.~\ref{def:radCon}.\ref{enu:radConv}, $\radconv{1}=1$.
From Def.~\ref{def:setOfConv}.\ref{enu:ConvRadConv}, we have $\setconv{1}{0}\subseteq(-1,1)$.
Now, take $x=[x_{\eps}]\in(-1,1)$, with $-1<x_{\eps}<1$ for all
$\eps$. From \cite[Example~8]{TiGi}, if $\sigma\leq\rho^{*}$ (i.e.~if
$\exists Q\in\R_{>0}\,\forall^{0}\eps:\ \sigma_{\eps}\le\rho_{\eps}^{Q}$),
we have Def.~\ref{def:setOfConv}.\ref{enu:ConvConv}. Finally, if
$[\bar{x}_{\eps}]=x$ is another representative and $k\in\N_{>0}$,
then $-1<\bar{x}_{\eps}<1$ for $\eps$ small, and from \eqref{eq:der_k}
we get $\sum_{n=k}^{+\infty}k!{n \choose k}\bar{x}_{\eps}^{n-k}=\frac{k!}{(1-\bar{x}_{\eps})^{k+1}}\in\R_{\rho}$
because $1-x>0$ is invertible. Note explicitly that $\sigma\le\rho^{*}$
is a sufficient condition ensuring the convergence of \emph{any} geometric
hyperseries with $|x|<1$. However, we already used (see e.g.~Thm.~\ref{thm:setConvNotTrivial})
the convergence of the geometric hyperseries $\hypersum{\rho}{\sigma}\diff\rho^{n}=\frac{1}{1-\diff\rho}$
for all gauges $\rho$, $\sigma$. More generally, exactly as proved
in \cite[Example~8]{TiGi}, it is easy to see that $\hypersum{\rho}{\sigma}x^{n}=\frac{1}{1-x}$
if $\sigma_{\eps}\le\left(\frac{\log x_{\eps}}{\log\rho_{\eps}}\right)^{Q}$
for $\eps$ small and some $Q\in\R_{>0}$.
\end{example}

\begin{example}[A smooth function with a flat point]
\label{exa:flatPoint}Consider the GSF corresponding to the ordinary
smooth function $f(x):=\begin{cases}
e^{-1/x} & \text{if }x\in\R_{>0}\\
0 & \text{otherwise}
\end{cases}$. It is not hard to prove that $|f(x)|\le|x|^{q}$ for all $x\approx0$
and all $q\in\N$. Thereby, $f(x)=0$ for all $x$ such that $|x|\leq\diff\rho^{r}$
for some $r\in\R_{>0}$. Therefore, we trivially have $f(x)=\hypersum{\rho}{\sigma}0\cdot x{}^{n}$
only for all $x$ in this infinitesimal neighborhood of $0$. On the
other hand, $\setconv{0}{0}=\rcrho$. Moreover, $\radconv{\frac{f^{(n)}(c)}{n!}}=+\infty$
and $\setconv{\frac{f^{(n)}(c)}{n!}}{c}=\rti$ for all $c\in\rti$
such that $|c|\gg0$, i.e.~satisfying $|c|\ge r$ for some $r\in\R_{>0}$,
but $f(x)=$$\hypersum{\rho}{\sigma}\frac{f^{(n)}(c)}{n!}\cdot\left(x-c\right)^{n}$
only for all $x\in\rti$ such that $|x|\gg0$, which is a strict subset
of $\setconv{\frac{f^{(n)}(c)}{n!}}{c}=\rti$. The GSF $f$ is therefore
a candidate to be a GRAF, but not an entire GRAF.
\end{example}

\begin{example}[A nowhere analytic smooth function]
\noindent \label{subsec:A-smooth-nowhere}A classical example of
an infinitely differentiable function which is not analytic at any
point is $F(x)=\sum_{k\in2^{\N}}e^{-\sqrt{k}}\cos(kx)$, where $2^{\N}:=\left\{ 2^{n}\mid n\in\N\right\} $.
Since for all $x=\pi\frac{p}{q}$, with $p\in\N$ and $q\in2^{\N}$
and for all $n\in2^{\N}$, $n\ge4$, $n>q$, we have $F^{(n)}(x)\ge e^{-2n}(4n^{2})^{n}+O(q^{n})$
as $n\to+\infty$, we have that $\left(\frac{F^{(n)}(x)}{n!}\right)_{n,\eps}\notin\left(\R^{\N\times I}\right)_{\rho}$,
i.e.~they are \emph{not} coefficients for a HPS.
\end{example}

\begin{example}[Exponential]
\label{exa:Exponential}We clearly have $\left(\frac{1}{n!}\right)_{\text{c}}\in\rcrhoc$
and $\radconv{\frac{1}{n!}}=+\infty$, i.e.~we have coefficients
for an HPS with infinite radius of convergence. Set $C:=\left\{ x\in\rcrho\mid\exists K\in\N:\ |x|<-K\log\diff\rho\right\} $.
For all $x=[x_{\eps}]\in C$ and all $N_{\eps}$, $M_{\eps}\in\N$,
we have $\left|\sum_{n=N_{\eps}}^{M_{\eps}}\frac{x_{\eps}^{n}}{n!}\right|\le e^{|x_{\eps}|}\le\rho_{\eps}^{-K}$
for $\eps$ small, and this shows that $\left[\frac{x^{n}}{n!}\right]_{\text{{\rm s}}}\in\hps{x}$,
i.e.~for all $x\in C$, we have a formal HPS. We finally want to
prove that $C=\setconv{\frac{1}{n!}}{0}$ if $\sigma\le\rho^{*}$.
The inclusion $\supseteq$ follows directly from Def.~\ref{def:setOfConv}.\ref{enu:ConvConv}.
If $x=[x_{\eps}]\in C$, then condition Def.~\ref{def:setOfConv}.\ref{enu:ConvDerMod}
holds because the $k$-th derivative HPS $\left(k!\sum_{n=k}^{+\infty}{n \choose k}\cdot\frac{x_{\eps}^{n-k}}{n!}\right)=\left(e^{x_{\eps}}\right)\in\R_{\rho}$.
To prove Def.~\ref{def:setOfConv}.\ref{enu:ConvConv}, assume that
$|x_{\eps}|<-K\log\rho_{\eps}=:M_{\eps}$ for all $\eps$ and set
$M:=[M_{\eps}]\in\rcrho$. Take $N=[N_{\eps}]\in\hypNs$ such that
$\frac{M}{N+1}<\frac{1}{2}$, so that, exactly as in \cite[Example~8]{TiGi},
we can prove that $\frac{M^{n+1}}{(n+1)!}<\frac{1}{2^{n+1}}$ and
hence $\left|\sum_{n=N_{\eps}+1}^{+\infty}\frac{x_{\eps}^{n}}{n!}\right|\le\sum_{n\ge N_{\eps}}\frac{1}{2^{n}}\to0$
as $N\to+\infty$, $N\in\hypNs$, if $\sigma\le\rho^{*}$. Similarly,
we can consider trigonometric functions whose set of convergence is
the entire $\rti$.
\end{example}

\begin{example}[\label{exa:Dirac-delta}Dirac delta]
\label{subsec:Dirac-delta}In Rem.~\ref{rem:radConv}.\ref{enu:deltaRadConv},
we already proved that $\left(\frac{\delta^{(n)}(0)}{n!}\right)_{\text{c}}\in\rcrhoc$
and $\radconv{\frac{\delta^{(n)}(0)}{n!}}=+\infty$. For all $x=[x_{\eps}]\in\rti$
and all $N_{\eps}$, $M_{\eps}\in\N,$ we have $\left|\sum_{n=N_{\eps}}^{M_{\eps}}\frac{\delta_{\eps}^{(n)}(0)}{n!}x_{\eps}^{n}\right|\le b_{\eps}\cdot\sum_{n=0}^{+\infty}\frac{|\mu^{(n)}(0)|}{n!}|b_{\eps}x_{\eps}|^{n}$.
But $|\mu^{(n)}(0)|=i^{n}\mu^{(n)}(0)$ because $\mu^{(n)}(0)=0$
if $n$ is odd, so that $\left|\sum_{n=N_{\eps}}^{M_{\eps}}\frac{\delta_{\eps}^{(n)}(0)}{n!}x_{\eps}^{n}\right|\le b_{\eps}\cdot\sum_{n=0}^{+\infty}\frac{\mu^{(n)}(0)}{n!}|ib_{\eps}x_{\eps}|^{n}=b_{\eps}\mu(i|b_{\eps}x_{\eps}|)\in\R_{\rho}$,
and this proves that $\left(\frac{\delta^{(n)}(0)}{n!}\right)_{\text{c}}\in\hps{x}$,
i.e.~we always have a formal HPS. Condition Def.~\ref{def:setOfConv}.\ref{enu:ConvDerMod}
follows because derivatives $\delta^{(k)}(x)\in\rti$ are always moderate.
It remains to prove Def.~\ref{def:setOfConv}.\ref{enu:ConvConv}
for all $x=[x_{\eps}]\in\rti$ to show that $\setconv{\frac{\delta^{(n)}(0)}{n!}}{0}=\rti$:
\[
\sum_{n=0}^{N}\frac{\delta^{(n)}(0)}{n!}x^{n}=\left[b_{\eps}\sum_{n=0}^{N_{\eps}}\frac{\mu^{(n)}(0)}{n!}b_{\eps}^{n}x_{\eps}^{n}\right]=\delta(x)-b\left[\mu^{(N_{\eps}+1)}(\bar{x}_{\eps})\frac{x_{\eps}^{N_{\eps}+1}}{(N_{\eps}+1)!}\right]
\]
where the existence of $\bar{x}_{\eps}\in[0,x_{\eps}]\cup[x_{\eps},0]$
is derived from Taylor's formula. Since $|\mu^{(k)}(y)|\le\frac{1}{2\pi}\int\beta(x)|x|^{k}\,\diff x=:C\in\R_{>0}$
for all $k\in\N$ and all $y\in\R$, we obtain
\[
\left|\sum_{n=0}^{N}\frac{\delta^{(n)}(0)}{n!}x^{n}-\delta(x)\right|\le bC\left[\frac{|x_{\eps}|^{N_{\eps}+1}}{(N_{\eps}+1)!}\right].
\]
Using Stirling's approximation, we have $\frac{|x_{\eps}|^{N_{\eps}+1}}{(N_{\eps}+1)!}\le2\left(\frac{|x_{\eps}|e}{N_{\eps}}\right)^{N}\le\rho_{\eps}^{N_{\eps}}$
for all $N\in\hypNs$ such that $N>|x|e\diff\rho^{-1}$, which is
always possible if $\sigma\le\rho^{*}$. Since $\hyperlimarg{\rho}{\sigma}{N}\diff\rho^{N}=0$,
this proves the claim.
\end{example}

\noindent A different way to include a large class of examples is
to use the characterization Thm.~\ref{thm:growthDers} by factorial
growth of derivatives of GRAF.

When we say that a HPS $\hypersum{\rho}{\sigma}a_{n}(x-c)^{n}$ is
\emph{convergent}, we already assume that its coefficients are correctly
chosen and that the point $x$ is in the set of convergence, as stated
in the following
\begin{defn}
\label{def:convHps} We say that $\hypersum{\rho}{\sigma}a_{n}(x-c)^{n}$
is a \emph{convergent} HPS if
\begin{enumerate}
\item $(a_{n})_{c}\in\rcrhoc$ are coefficients for HPS.
\item $x\in\setconv{a_{n}}{c}$.
\end{enumerate}
\end{defn}

In all the previous examples, we recognized that dealing with HPS
is more involved than working with ordinary series, where we only
have to check that the final result is a convergent series ``of the
form'' $\sum_{n=0}^{\infty}a_{n}(x-c)^{n}$. On the contrary, for
HPS we have to control the following steps:
\begin{enumerate}[label=\arabic*)]
\item We have to check that the net $(a_{n\eps})_{n,\eps}$ defines coefficients
for HPS (Def.~\ref{def:radCon}.\ref{enu:coeffHPS}), i.e.~that
\[
\forall^{0}\eps\,\forall n\in\N:\ |a_{n\eps}|\le\rho_{\eps}^{-nQ-R}
\]
for some $Q$, $R\in\N_{>0}$. This allows us to talk of the radius
of convergence $\radconv{a_{n}}$ and of the set of convergence $\setconv{a_{n}}{c}$
(Def.~\ref{def:setOfConv}). Because of Thm.~\ref{thm:setConvNotTrivial},
this set is always non-trivial
\begin{equation}
(c-\diff\rho^{q},c+\diff\rho^{q})\subseteq\setconv{a_{n}}{c}\subseteq(c-\radconv{a_{n}},c+\radconv{a_{n}}),\label{eq:boundsSetConv}
\end{equation}
but in general \emph{is not an interval}, like the case of the exponential
function clearly shows. This step already allows us to say that the
HPS $\hypersum{\rho}{\sigma}a_{n}(x-c)^{n}$ is convergent, i.e.~Def.~\ref{def:convHps},
if $x\in\setconv{a_{n}}{c}$.
\item At this point, we can study the set of convergence, e.g.~to arrive
at an explicit form $C=\setconv{a_{n}}{c}\subseteq(c-\radconv{a_{n}},c+\radconv{a_{n}})$.
This depends mainly on three conditions:
\begin{enumerate}[label=\alph*)]
\item For all $x\in C$, we must have a formal HPS (Def.~\ref{def:formalHps})
because this allows us to talk of any hyperfinite sum $\sum_{n=N}^{M}a_{n}(x-c)^{n}$
for $M$, $N\in\hypNs$. Here, the main step is to prove that the
net $\left(\sum_{n=N_{\eps}}^{M_{\eps}}a_{n\eps}(x_{\eps}-c_{\eps})^{n}\right)\in\R_{\rho}$.
\item For all $x\in C$, we have to check Def.~\ref{def:setOfConv}.\ref{enu:ConvConv},
i.e.~the equality:
\begin{equation}
\hypersum{\rho}{\sigma}a_{n}(x-c)^{n}=\left[\sum_{n=0}^{+\infty}a_{n\eps}(x_{\eps}-c_{\eps})^{n}\right]\in\rcrho.\label{eq:1cond}
\end{equation}
\item Finally, we have to prove that for all representatives $x=[\bar{x}_{\eps}]\in C$,
all the derivatives $\frac{\diff{}^{k}}{\diff{x}^{k}}\left(\sum_{n=0}^{+\infty}a_{n\eps}(\bar{x}_{\eps}-c_{\eps})^{n}\right)$
are $\rho$-moderate.
\item After the previous three steps, we get $C\subseteq\setconv{a_{n}}{c}$,
and hence it remains to prove the opposite inclusion.
\end{enumerate}
See Cor.~\ref{cor:setConv_sigma<=00003Drho*} for sufficiently general
conditions under which only \eqref{eq:1cond} suffices to prove that
$x$ lies in the set of convergence.
\end{enumerate}
\noindent Note explicitly that we \emph{never} formally defined what
is a HPS: we have \emph{formal HPS} (Def.~\ref{def:formalHps}),
the notion of \emph{coefficients for HPS} (Def.~\ref{def:radCon}.\ref{enu:coeffHPS}),
which always have a strictly positive \emph{radius of convergence}
(Def.~\ref{def:radCon}.\ref{enu:radConv}) and a non trivial \emph{set
of convergence} (Def.~\ref{def:setOfConv} and Thm.~\ref{thm:setConvNotTrivial}),
and finally \emph{convergent HPS }(Def.~\ref{def:convHps}).

\subsection{Topological properties of the set of convergence}

The first consequence of our definition of convergent HPS Def.~\ref{def:convHps}
and radius of convergence Def.~\ref{def:radCon}, is the following
\begin{lem}
\label{lem:sup_eq_rad}Let $\hypersum{\rho}{\sigma}a_{n}(x-c)^{n}$
be a convergent HPS. If the following least upper bound exists
\begin{equation}
\text{\emph{lub}}\left\{ |\bar{x}-c|\mid{\textstyle \hypersum{\rho}{\sigma}a_{n}(\bar{x}-c)^{n}}\text{ is a convergent HPS}\right\} =:r\in\rti,\label{eq:lub}
\end{equation}
then $r\le\radconv{a_{n}}$.
\end{lem}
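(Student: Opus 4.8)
The plan is to show that $\radconv{a_n}$ is an upper bound, in the order of $\rhoext$, of the set appearing in \eqref{eq:lub}, and then to exploit the least‑upper‑bound property of $r$; the only genuine difficulty is that $\radconv{a_n}\in\rhoext$ need not be $\rho$‑moderate whereas $r\in\rti$. First I would observe that if $\hypersum{\rho}{\sigma}a_{n}(\bar{x}-c)^{n}$ is a convergent HPS, then by Def.~\ref{def:setOfConv}.\ref{enu:ConvRadConv} we have $|\bar{x}-c|<\radconv{a_n}$; hence $\radconv{a_n}$ is a (strict) upper bound of the set $S$ whose least upper bound is $r$. It then remains to turn ``upper bound in $\rhoext$'' into the inequality $r\le\radconv{a_n}$, i.e.\ (by the standard $\eps$‑wise characterization of the order, cf.~\cite{MTAG}) into the statement that for every fixed $q\in\N$ one has $\forall^{0}\eps:\ r_{\eps}\le R_{\eps}+\rho_{\eps}^{q}$, where $r=[r_\eps]$ and $\radconv{a_n}=[R_\eps]$.

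The key tool will be the dichotomy of Lem.~\ref{thm:radConvTool}.\ref{enu:radConvDichotomy}, applied to the property $\mathcal{P}\{R_\eps\}$ given, for the fixed $q$, by ``$r_\eps\le R_\eps+\rho_\eps^q$''. This is exactly the right device because, as Rem.~\ref{rem:radConv}.\ref{enu:nonModRadConv} shows, $\radconv{a_n}$ may be $\rho$‑moderate on some subpoints and non‑moderate on others, and the dichotomy glues the two behaviours into a single conclusion holding for all small $\eps$. So I would fix $L\subzero I$ and verify $\mathcal{P}$ in the two cases of the implication \eqref{eq:implication}. If $\radconv{a_n}>_{L}\diff\rho^{-Q}$ for all $Q\in\N$, then since $r\in\rti$ is $\rho$‑moderate we have $r\le\diff\rho^{-Q_0}$ for some $Q_0$, while $R_\eps>\rho_\eps^{-Q_0}$ for $\eps\in L$ small; hence $r_\eps\le\rho_\eps^{-Q_0}\le R_\eps\le R_\eps+\rho_\eps^q$ on $L$, and this case is immediate.

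The main obstacle is the complementary case $\radconv{a_n}\le_{L}\diff\rho^{-Q}$, where $\radconv{a_n}$ is $\rho$‑moderate on $L$ and I must genuinely use the leastness of $r$. Here I would first cap the radius into a bona fide element of $\rti$ by setting $\tilde u_\eps:=\min(R_\eps,\rho_\eps^{-Q})$, so that $(\tilde u_\eps)_\eps$ is $\rho$‑moderate and $\tilde u=_{L}\radconv{a_n}$, and then build a competing upper bound $w\in\rti$ by $w_\eps:=r_\eps$ for $\eps\notin L$ and $w_\eps:=\min(r_\eps,\tilde u_\eps)$ for $\eps\in L$. One checks that $w$ is $\rho$‑moderate and still an upper bound of $S$: for $|\bar{x}-c|\in S$ we have $|\bar{x}_\eps-c_\eps|\le r_\eps$ everywhere (as $r$ bounds $S$) and $|\bar{x}_\eps-c_\eps|\le\tilde u_\eps$ on $L$ (from $|\bar{x}-c|<\radconv{a_n}$ and $\radconv{a_n}=_{L}\tilde u$), so $|\bar{x}_\eps-c_\eps|\le w_\eps$ for $\eps$ small. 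The least‑upper‑bound property then forces $r\le w$, and since $w_\eps\le\tilde u_\eps=R_\eps$ on $L$ this yields $r\le_{L}\radconv{a_n}$, hence $\mathcal{P}\{R_\eps\}$ on $L$. The dichotomy then delivers $\forall^{0}\eps:\ r_\eps\le R_\eps+\rho_\eps^q$ for every $q$, i.e.\ $r\le\radconv{a_n}$.

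The delicate points I expect to have to justify carefully are the well‑definedness of the spliced net $w$ modulo $\sim_\rho$ and the compatibility of the least upper bound with restriction to the cofinal set $L$; both reduce to the $\eps$‑wise description of order, moderateness and suprema in $\rti$ recalled in \cite{MTAG}. It is worth emphasizing that the capping by $\rho_\eps^{-Q}$ is precisely what keeps $w$ inside $\rti$, so that the leastness of $r$ — which, by hypothesis, is a statement only about $\rho$‑moderate upper bounds — can legitimately be invoked.
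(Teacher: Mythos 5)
Your proof is correct, and its skeleton is the same as the paper's: show that $\radconv{a_{n}}$ is an upper bound of the set $S$ appearing in \eqref{eq:lub}, then invoke leastness of $r$. The differences are in execution. For the upper-bound step you quote Def.~\ref{def:setOfConv}.\ref{enu:ConvRadConv} directly (every convergent HPS satisfies $|\bar{x}-c|<\radconv{a_{n}}$ by definition), whereas the paper re-derives the bound $\eps$-wise from the classical Cauchy--Hadamard theorem applied to the convergent ordinary series $\sum_{n=0}^{+\infty}a_{n\eps}(\bar{x}_{\eps}-c_{\eps})^{n}$; both are fine, and yours is the more direct. For the second step the paper stops right after the upper-bound statement, leaving implicit exactly the point you single out as the genuine difficulty: leastness of $r$ concerns upper bounds in $\rti$, while $\radconv{a_{n}}\in\rhoext$ need not be moderate. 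Your resolution via Lem.~\ref{thm:radConvTool}.\ref{enu:radConvDichotomy} together with the capped and spliced competitor $w$ is sound: in the branch $\radconv{a_{n}}>_{L}\diff\rho^{-Q}$ for all $Q$ the claim follows from moderateness of $r$, and in the moderate branch $w$ is a moderate upper bound of $S$, so $r\le w\le_{L}\radconv{a_{n}}$. However, it is heavier than needed, because your capping idea already works globally, with no dichotomy and no splicing over subpoints: fixing representatives $r=[r_{\eps}]$ and $\radconv{a_{n}}=[R_{\eps}]$, the net $w_{\eps}:=\min(r_{\eps},R_{\eps})$ is moderate (it lies between $\min(r_{\eps},0)$ and $r_{\eps}$, since $R_{\eps}>0$ by Thm.~\ref{thm:radConvTool}.\ref{enu:radConvZero}), hence defines $w\in\rti$; it is an upper bound of $S$ because every $s\in S$ satisfies both $s\le r$ and $s\le\radconv{a_{n}}$; and leastness then gives $r\le w\le\radconv{a_{n}}$ in $\rhoext$. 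Note also that since $w$ is introduced as the class of one explicitly chosen net, the representative-independence you flag as delicate never has to be checked. So your argument supplies rigor that the paper's one-line proof omits, but the dichotomy lemma is dispensable here: the min-capping you use inside the moderate case is, by itself, the whole bridge.
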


\begin{proof}
In fact, if $\hypersum{\rho}{\sigma}a_{n}(\bar{x}-c)^{n}$ is a convergent
HPS, then $\hypersum{\rho}{\sigma}a_{n}(\bar{x}-c)^{n}=\left[\sum_{n=0}^{+\infty}a_{n\eps}(\bar{x}_{\eps}-c_{\eps})^{n}\right]$,
and hence $|\bar{x}_{\eps}-c_{\eps}|\leq(\limsup_{n}\left|a_{n\eps}\right|^{1/n})^{-1}$
for all $\eps$ small, i.e.~$|\bar{x}-c|\le\radconv{a_{n}}$.
\end{proof}
\noindent Note that from Example \ref{exa:Exponential}, we have that
the least upper bound of
\begin{equation}
\left\{ x\in\rcrho\mid{\textstyle \hypersum{\rho}{\sigma}\frac{x^{n}}{n!}}\text{ is a convergent HPS}\right\} \label{eq:expConv}
\end{equation}
does not exist in $\rti$, whereas Def.~\ref{def:radCon} yields
the value $\radconv{\frac{1}{n!}}=+\infty$. Therefore, Def.~\ref{def:radCon}
allows us to consider the exponential HPS even if the supremum of
\eqref{eq:expConv} does not exist. It remains an open problem whether
$r=\radconv{a_{n}}$, at least if the least upper bound \eqref{eq:lub},
or the corresponding sharp supremum, exists.

We now study absolute convergence of HPS, and sharply boundedness
of the summands of a HPS. We first show that the hypersequence $\left(a_{n}(x-c)^{n}\right)_{n\in\hypNs}$
of the terms of a HPS is sharply bounded:
\begin{lem}
\label{lem:kboundhypersum}Let $x$, $c\in\rti$. If $\hypersum{\rho}{\sigma}a_{n}(x-c)^{n}$
is a convergent HPS, then 
\begin{equation}
\text{\ensuremath{\exists K\in\rcrho\,\forall n\in\hypNs:\ |a_{n}(x-c)^{n}|<K}}.\label{eq:KboundHPS}
\end{equation}
We recall that because of the definition of formal HPS (Def.~\ref{def:formalHps})
and \cite[Lem.~7]{TiGi} the term $a_{n}(x-c)^{n}\in\rti$ is well-defined
for all $n\in\hypNs$.
\end{lem}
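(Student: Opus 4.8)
The plan is to bound each term $a_{n}(x-c)^{n}$ by the partial sums of the convergent hyperseries, exploiting the telescoping identity $a_{n}(x-c)^{n}=\sum_{m=0}^{n}a_{m}(x-c)^{m}-\sum_{m=0}^{n-1}a_{m}(x-c)^{m}$. As usual I may assume $c=0$ and set $y:=x$ with a representative $[y_{\eps}]=y$. By the recalled \cite[Lem.~7]{TiGi} and by Lem.~\ref{lem:indepRepr}, the terms $a_{n}y^{n}\in\rti$ and the hyperfinite partial sums $S_{N}:=\sum_{n=0}^{N}a_{n}y^{n}\in\rcrho$ are well-defined for every $N\in\hypNs$, independently of representatives. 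Since $\hypersum{\rho}{\sigma}a_{n}y^{n}$ is a convergent HPS (Def.~\ref{def:convHps}), on the one hand the hypersequence $(S_{N})_{N\in\hypNs}$ converges in $\rcrho$ to $S:=\hypersum{\rho}{\sigma}a_{n}y^{n}$, and on the other hand $\big[a_{n\eps}y_{\eps}^{n}\big]_{\text{s}}\in\hps{y}$ is a formal HPS (Def.~\ref{def:setOfConv}.\ref{enu:ConvFHPS}, Def.~\ref{def:formalHps}).

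The core step is to show that $(S_{N})_{N\in\hypNs}$ is sharply bounded, i.e.~$\exists H\in\rcrho\,\forall N\in\hypNs:\ |S_{N}|\le H$. I would split the hypernaturals into a tail and a hyperfinite head, exactly as in the classical proof that a convergent sequence is bounded. For the tail, convergence of the hyperlimit applied with exponent $q=0$ (so that $\diff\rho^{0}=1$) yields a threshold $N_{0}\in\hypNs$ with $|S_{N}-S|\le1$, hence $|S_{N}|\le|S|+1\in\rcrho$, for all $N\ge N_{0}$ in $\hypNs$. For the head $\{N\in\hypNs\mid N\le N_{0}\}$, I would use that, for the fixed hypernatural $N_{0}=[N_{0\eps}]$, the formal HPS condition $\big[a_{n\eps}y_{\eps}^{n}\big]_{\text{s}}\in\hps{y}\subseteq\rcrhos$ forces the net $\big(\sup_{N\le N_{0\eps}}|\sum_{n=0}^{N}a_{n\eps}y_{\eps}^{n}|\big)_{\eps}$ to be $\rho$-moderate; this is precisely the uniform moderateness of hyperfinite sums built into the definition of $\rcrhos$ in \cite{TiGi,MTAG}. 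Calling the resulting bound $K_{1}\in\rcrho$ and setting $H:=\max(|S|+1,K_{1})$ then bounds $|S_{N}|$ uniformly over all $N\in\hypNs$.

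Finally, for $n\in\hypNs$ with $n\ge1$ both $n$ and $n-1$ lie in $\hypNs$, so the telescoping identity gives $|a_{n}y^{n}|=|S_{n}-S_{n-1}|\le2H$, while the case $n=0$ is just the moderate number $a_{0}$; taking $K:=2H+1\in\rcrho$ yields $|a_{n}(x-c)^{n}|<K$ for all $n\in\hypNs$, which is \eqref{eq:KboundHPS}. The main obstacle is the head estimate in the previous paragraph: a naive termwise bound $|a_{n\eps}y_{\eps}^{n}|\le\rho_{\eps}^{-nQ-R}|y_{\eps}|^{n}$ summed over $n\le N_{0\eps}$ need \emph{not} be $\rho$-moderate, since $N_{0\eps}$ may grow far faster than any power of $1/\rho_{\eps}$ when $\sigma$ is very fine; the decisive input is therefore not a crude estimate but the uniform $\rho$-moderateness of the hyperfinite partial sums encoded in $\rcrhos$, which is exactly where the interplay between the gauges $\rho$ and $\sigma$ enters. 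Alternatively, one could try to argue $\eps$-wise through the dichotomy of Lem.~\ref{thm:radConvTool}.\ref{enu:radConvDichotomy}, choosing by Lem.~\ref{thm:radConvTool}.\ref{enu:radConvModLe} some $s\in\rti$ with $|y|<s\le\radconv{a_{n}}$ to separate $|y_{\eps}|$ from the radius; but the $\rho$-moderateness of $\sup_{n}|a_{n\eps}y_{\eps}^{n}|$ still ultimately requires the convergence (moderateness of the full sum), not merely the inequality $|y|<\radconv{a_{n}}$, which is why I prefer the partial-sum route above.
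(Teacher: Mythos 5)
Your proof is correct in substance, but it takes a genuinely different route from the paper's. The paper works directly on the \emph{terms}: it cites \cite[Lem.~15]{TiGi} (the hyperseries analogue of the $n$-th term test) to get $N\in\hypNs$ with $|a_{n}\bar{x}^{n}|<1$ for all $n\in\hypNs_{\geq N}$, and for the head it bounds $|a_{n}\bar{x}^{n}|\le_{L}\sum_{n=0}^{N-1}|a_{n}\bar{x}^{n}|=:s$ by a single hyperfinite sum of absolute values, gluing the two cases via the subpoint dichotomy \cite[Lem.~13]{MTAG} and \cite[Lem.~7.(iii)]{MTAG}. You instead work on the \emph{partial sums} $S_{N}$: tail boundedness from the definition of hyperlimit with $q=0$, head boundedness from the moderateness encoded in $\rcrhos$, and then the telescoping identity $a_{n}y^{n}=S_{n}-S_{n-1}$. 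Your route is more self-contained, since the telescoping step in effect re-proves \cite[Lem.~15]{TiGi} rather than citing it, and it correctly isolates where the gauge interplay enters (your closing remark on why a crude termwise estimate fails is exactly right); the paper's route is shorter because it reuses ready-made lemmas and never needs a supremum over the head.

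Two points in your write-up need tightening, though neither is fatal. First, the head estimate: the definition of $\rcrhos$ gives $\rho$-moderateness of $\left(\sum_{n=0}^{N_{\eps}}a_{n\eps}y_{\eps}^{n}\right)_{\eps}$ for each fixed net $(N_{\eps})\in\N_{\sigma}$, which is not literally the same as moderateness of the supremum over the head; you must add the (one-line) argmax argument: for each $\eps$ choose $M_{\eps}\le N_{0\eps}$ attaining $\max_{N\le N_{0\eps}}\left|\sum_{n=0}^{N}a_{n\eps}y_{\eps}^{n}\right|$; since $M_{\eps}\le N_{0\eps}$ we get $(M_{\eps})\in\N_{\sigma}$, hence $[M_{\eps}]\in\hypNs$ and the defining property of $\rcrhos$ applies to it, showing that the sup net itself is $\rho$-moderate. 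Second, your final case distinction ``$n\ge1$ or $n=0$'' is not exhaustive in $\hypNs$: a hypernatural can equal $0$ on one subpoint and be $\ge1$ on a complementary one. This is repaired exactly in the paper's style, by arguing on subpoints: set $m:=\max(n-1,0)\in\hypNs$, note that $a_{n}y^{n}=_{L}a_{0}$ and $a_{n}y^{n}=_{L^{c}}S_{n}-S_{m}$ on complementary subpoints, take $K:=2H+|a_{0}|+1$, and conclude with \cite[Lem.~7.(iii)]{MTAG}.
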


\begin{proof}
Set $\bar{x}:=x-c$, i.e.~without loss of generality we can assume
$c=0$. Since $\hypersum{\rho}{\sigma}a_{n}\bar{x}^{n}$ converges,
from \cite[Lem.~15]{TiGi} we have

\begin{equation}
\exists N\in\hypNs\,\forall n\in\hypNs_{\geq N}:\ |a_{n}\bar{x}^{n}|<1.\label{eq:1boundHPS}
\end{equation}

\noindent Let us consider an arbitrary $n\in\hypNs$. From \cite[Lem.~13]{MTAG},
we have either $n\geq N$ or $n<_{L}$$N$ for some $L\subzero I$.
In the latter case, $|a_{n}\bar{x}^{n}|\le_{L}s:=\sum_{n=0}^{N-1}|a_{n}\bar{x}^{n}|<\max(s+1,1)=:K$.
From \cite[Lem. 7.(iii)]{MTAG} and from \eqref{eq:1boundHPS}, the
claim follows.
\end{proof}
\noindent The previous proof is essentially the generalization in
our setting of the classical one, see e.g.~\cite{KrPa02}. However,
property \eqref{eq:KboundHPS} does not allow us to apply the direct
comparison test \cite[Thm.~22]{TiGi}. Indeed, let us imagine that
we only prove $\left|a_{n}x^{n}\right|<Kh^{n}$, with $h<1$, for
all $n\in\hypNs$ and with $K$ coming from \eqref{eq:KboundHPS};
as we already explained in \cite[Sec.~3.3]{TiGi}, this would imply
\[
\forall n\in\N\,\exists\eps_{0n}\,\forall\eps\le\eps_{0n}:\ \left|a_{n\eps}x_{\eps}^{n}\right|\le K_{\eps}h_{\eps}^{n},
\]
and the dependence of $\eps_{0n}$ from $n\in\N$ is a problem in
estimating inequalities of the form $\sum_{n=0}^{N_{\eps}}\left|a_{n\eps}x_{\eps}^{n}\right|\le K_{\eps}\sum_{n=0}^{N_{\eps}}h_{\eps}^{n}$,
see \cite{TiGi}. A solution of this problem is to consider a uniform
property of $n\in\N$ with respect to $\eps$:
\begin{defn}
\label{def:eventInf}Let $(a_{n})_{{\rm c}}\in\rcrhoc$ and $x$,
$c\in\rti$, then we say that $\left(a_{n}(x-c)^{n}\right)_{n\in\N}$
\emph{is eventually $\rti$-bounded in} $\rcrhoc$, if there exist
representatives $(a_{n})_{{\rm c}}=\left[a_{n\eps}\right]_{\text{{\rm c}}}$,
$[x_{\eps}]=x$, $[c_{\eps}]=c$ such that
\begin{equation}
\exists[R_{\eps}]\in\rti\,\exists N\in\N\,\forall^{0}\eps\,\forall n\in\N_{\ge N}:\ \left|a_{n\eps}(x_{\eps}-c_{\eps})^{n}\right|<R_{\eps}.\label{eq:eventBound}
\end{equation}
\end{defn}

\begin{rem}
\label{rem:eventBound}\ 
\begin{enumerate}
\item The adverb \emph{eventually} clearly refers to the validity of the
uniform inequality in \eqref{eq:eventBound} only for $n$ sufficiently
large.
\item If for $\eps$ small, the series $\sum_{n=0}^{+\infty}\left|a_{n\eps}(x_{\eps}-c_{\eps})^{n}\right|=:R_{\eps}$
of absolute values terms converges to a $\rho$-moderate net, then
\eqref{eq:eventBound} holds for $N=0$. This includes Example \ref{exa:Geometric-series}
of geometric hyperseries, Example \ref{exa:flatPoint} of a function
with a flat point if both $x$, $c$ are finite, and Example \ref{exa:Exponential}
of the exponential hyperseries at $c=0$ if $x$ is finite.
\item \label{enu:eventBoundDelta}In Example \ref{exa:Dirac-delta} of Dirac
delta at $c=0$, if $|bx|\le1$ (therefore, $x$ is an infinitesimal
number) we have $\left|\frac{\delta_{\eps}^{(n)}(0)}{n!}x_{\eps}^{n}\right|=\left|\frac{\mu^{(n)}(0)}{n!}b_{\eps}^{n+1}x_{\eps}^{n}\right|\le b_{\eps}$
for all $n\in\N$ such that $\left|\frac{\mu^{(n)}(0)}{n!}\right|\le1$.
Therefore, $\left(\frac{\delta^{(n)}(0)}{n!}x^{n}\right)_{n\in\N}$
is eventually $\rti$-bounded in $\rcrhoc$ if $|bx|\le1$. If $x\gg0$,
i.e.~$x\ge s\in\R_{>0}$, then $\left|\frac{\delta_{\eps}^{(n)}(0)}{n!}x_{\eps}^{n}\right|=\left|\frac{\mu^{(n)}(0)}{n!}b_{\eps}^{n+1}x_{\eps}^{n}\right|\ge\left|\frac{\mu^{(n)}(0)}{n!}\right|s^{n}b_{\eps}^{n+1}$
and hence condition \eqref{eq:eventBound} does not hold for any $[R_{\eps}]\in\rti$
because $b\ge\diff\rho^{-a}$ for some $a\in\R_{>0}$ (see e.g.~\cite[Sec.~3.0.2]{TI}).
\end{enumerate}
\end{rem}

\noindent The last example also shows that property \eqref{eq:eventBound}
does not hold for all point $x\in\setconv{a_{n}}{c}$. However, it
always holds for any $c$ if $x$ is sufficiently near to $c$:
\begin{lem}
\label{lem:eventBoundNear_c}Let $(a_{n})_{{\rm c}}\in\rcrhoc$ and
$c\in\rti$, then there exists $\sigma\in\R_{>0}$ such that for all
$x\in B_{\sigma}(c)$, the sequence of summands $\left(a_{n}(x-c)^{n}\right)_{n\in\N}$
is eventually $\rti$-bounded in $\rcrhoc$.
\end{lem}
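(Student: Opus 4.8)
The plan is to reduce the whole statement to the single uniform bound already built into weak $\rho$-moderateness of the coefficients. Since $(a_n)_{\rm c}\in\rcrhoc$, Definition~\ref{def:radCon}.\ref{enu:coeffHPS} (that is, \eqref{eq:weakMod}) supplies a representative $(a_{n\eps})_{n,\eps}$ together with exponents $Q,R\in\N$ such that $|a_{n\eps}|\le\rho_\eps^{-nQ-R}$ for $\eps$ small and for \emph{all} $n\in\N$ simultaneously. This $Q$ is exactly what dictates the admissible radius: I would take the radius of $B_\sigma(c)$ to be the infinitesimal $\diff\rho^{Q}$ (e.g.~$\sigma:=Q$, under the convention that $B_\sigma(c)$ is the ball of radius $\diff\rho^\sigma$, in analogy with the infinitesimal intervals $(c-\diff\rho^q,c+\diff\rho^q)$ used in Thm.~\ref{thm:setConvNotTrivial}), and claim eventual $\rti$-boundedness on it.

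The key computation is then a one-line geometric cancellation. Given $x\in B_\sigma(c)$, from $|x-c|<\diff\rho^{Q}$ and the definition of the strict sharp order I would extract representatives $[x_\eps]=x$ and $[c_\eps]=c$ with $|x_\eps-c_\eps|\le\rho_\eps^{Q}$ for $\eps$ small (concretely: fix any representative of $c$ and add to it a representative of $x-c$ witnessing the strict inequality). Combining this with the moderateness bound gives, for all $n\in\N$ and all $\eps$ small,
\[
|a_{n\eps}(x_\eps-c_\eps)^n|\le\rho_\eps^{-nQ-R}(\rho_\eps^{Q})^n=\rho_\eps^{-R}.
\]
Setting $R_\eps:=\rho_\eps^{-R}$, which is manifestly $\rho$-moderate so that $[R_\eps]\in\rti$, this verifies Definition~\ref{def:eventInf} with $N=0$, uniformly in $n$.

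The real content — and the only delicate point — is the \emph{uniformity in $n$}. What makes the displayed estimate legitimate is precisely that \eqref{eq:weakMod} quantifies in the order $\forall^{0}\eps\,\forall n\in\N$ rather than $\forall n\,\forall^{0}\eps$, as stressed in Remark~\ref{rem:radConv}.\ref{enu:whyR}; a merely pointwise-in-$n$ bound would reintroduce the $n$-dependence of the threshold flagged as fatal in the discussion following Lemma~\ref{lem:kboundhypersum}. Thus I must commit to a \emph{single} weakly moderate representative $(a_{n\eps})_{n,\eps}$ and a single pair $(x_\eps),(c_\eps)$ serving every $n$ at once, which is exactly what Definition~\ref{def:eventInf} permits. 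I expect no further obstacle: once the radius is infinitesimal of order $\diff\rho^{Q}$, the cancellation is automatic. It is worth noting that the radius genuinely \emph{cannot} be a finite real — Remark~\ref{rem:eventBound}.\ref{enu:eventBoundDelta} (the Dirac delta with $x\gg0$) shows eventual boundedness fails away from an infinitesimal neighbourhood of $c$ — so the infinitesimal size of $B_\sigma(c)$ is essential rather than an artefact of the argument.
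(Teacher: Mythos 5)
Your proposal is correct and is essentially the paper's own proof: the paper also sets the radius to $\diff\rho^{Q}$ (its stated ``$\sigma\in\R_{>0}$'' notwithstanding, the proof there takes $\sigma:=\diff\rho^{Q}\in\rti$) and concludes via the same cancellation $|a_{n\eps}(x_{\eps}-c_{\eps})^{n}|\le\rho_{\eps}^{-nQ-R}\rho_{\eps}^{nQ}=\rho_{\eps}^{-R}$, uniformly in $n$. Your extra care about choosing representatives witnessing $|x_{\eps}-c_{\eps}|\le\rho_{\eps}^{Q}$ and about the quantifier order $\forall^{0}\eps\,\forall n$ only makes explicit what the paper leaves implicit.
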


\begin{proof}
Using the same notation as above, since $(a_{n})_{{\rm c}}\in\rcrhoc$,
we have $\forall^{0}\eps\,\forall n\in\N:\ |a_{n\eps}|\le\rho_{\eps}^{-nQ-R}$.
Therefore, for $\sigma:=\diff\rho^{Q}$, we have $|a_{n\eps}(x_{\eps}-c_{\eps})^{n}|\le\rho_{\eps}^{-nQ-R}\rho_{\eps}^{nQ}=\rho_{\eps}^{-R}$.
\end{proof}
The following result is a stronger version of the previous Lem.~\ref{lem:kboundhypersum},
and allow us to apply the dominated convergence test:
\begin{lem}
\label{lem:KstrongBound}Let $(a_{n})_{{\rm c}}\in\rcrhoc$, $x$,
$c\in\rti$, and assume that $\left(a_{n}(x-c)^{n}\right)_{n\in\N}$
\emph{is eventually $\rti$-bounded in} $\rcrhoc$, then
\begin{equation}
\exists K\in\rcrho:\ \left(\left(a_{n}(x-c)^{n}\right)\right)_{\text{\emph{c}}}<K\text{ in }\rcrhoc,\label{eq:eventBoundIntr}
\end{equation}
i.e.~for all representatives $(a_{n})_{{\rm c}}=\left[a_{n\eps}\right]_{\text{{\rm c}}}$,
$[x_{\eps}]=x$, $[c_{\eps}]=c$, $[K_{\eps}]=K$, we have
\begin{equation}
\forall^{0}\eps\,\forall n\in\N:\ \left|a_{n\eps}(x_{\eps}-c_{\eps})^{n}\right|<K_{\eps}.\label{eq:eventBoundEps}
\end{equation}
Since $\rti\subseteq\rcrhoc$ by Rem.~\ref{rem:radConv}.\ref{enu:rcrhoSubsetRhoext},
property \eqref{eq:eventBoundIntr} also shows that Def.~\ref{def:eventInf}
does not depend on the representatives involved.
\end{lem}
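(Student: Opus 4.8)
The plan is to first extract a single concrete representative realizing the bound, then extend it from the indices $n\ge N$ provided by the hypothesis to all $n\in\N$, and finally promote the resulting estimate from that one representative to every representative by working with strong $\rho$-equivalence at the level of the \emph{product} nets. I would begin by unwinding Def.~\ref{def:eventInf}: it supplies representatives $[\hat{a}_{n\eps}]_{\text{c}}=(a_{n})_{\text{c}}$, $[\hat{x}_{\eps}]=x$, $[\hat{c}_{\eps}]=c$, a moderate net $[\hat{R}_{\eps}]\in\rti$ and an index $\hat{N}\in\N$ with $|\hat{a}_{n\eps}(\hat{x}_{\eps}-\hat{c}_{\eps})^{n}|<\hat{R}_{\eps}$ for all $n\ge\hat{N}$ and all $\eps$ small. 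Writing $\hat{P}_{n\eps}:=\hat{a}_{n\eps}(\hat{x}_{\eps}-\hat{c}_{\eps})^{n}$, this is a net bounded uniformly in $n\ge\hat N$ by a moderate quantity, so $(\hat{P}_{n\eps})_{n,\eps}$ is weakly $\rho$-moderate and represents a well-defined element $((a_{n}(x-c)^{n}))_{\text{c}}\in\rcrhoc$.

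Next I would dispose of the finitely many initial indices $n=0,\dots,\hat{N}-1$. For each such fixed $n$, Rem.~\ref{rem:radConv}.\ref{enu:whyR} gives $(\hat{a}_{n\eps})_{\eps}\in\rti$, and since $\hat{x}-\hat{c}\in\rti$ is moderate, the net $(\hat{P}_{n\eps})_{\eps}$ is $\rho$-moderate. Hence $S_{\eps}:=\sum_{n=0}^{\hat{N}-1}|\hat{P}_{n\eps}|$ is a finite sum of moderate nets, so $[S_{\eps}]\in\rti$, and with $B_{\eps}:=S_{\eps}+\hat{R}_{\eps}$ I obtain $|\hat{P}_{n\eps}|\le B_{\eps}$ for \emph{all} $n\in\N$ and all $\eps$ small (the terms $n\ge\hat N$ being controlled by $\hat R_\eps$, the remaining ones by $S_\eps$). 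Setting $K_{\eps}:=B_{\eps}+2$, one has $[K_{\eps}]\in\rcrho$ and the strict bound $|\hat{P}_{n\eps}|<K_{\eps}$ for the chosen representative.

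The main obstacle is the passage to arbitrary representatives, i.e.~reading the pointwise inequality \eqref{eq:eventBoundEps} as the order relation \eqref{eq:eventBoundIntr} in $\rcrhoc$. The decisive point is that this transfer must be performed for the products, not for the separate data $a_{n},x,c$: since the coefficients may grow like $\rho_{\eps}^{-nQ-R}$, replacing $x$ and $c$ within $\sim_{\rho}$ alone need not preserve any bound that is \emph{uniform in} $n$. The correct tool is strong $\rho$-equivalence. Indeed, any representative $(P_{n\eps})$ of the fixed element $((a_{n}(x-c)^{n}))_{\text{c}}$ satisfies $(P_{n\eps})\simeq_{\rho}(\hat{P}_{n\eps})$, and specializing $q=r=0$ in \eqref{eq:strongEq} yields the uniform estimate $|P_{n\eps}-\hat{P}_{n\eps}|\le\rho_{\eps}^{0}=1$ for all $n\in\N$ and all $\eps$ small. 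Combining this with $|\hat P_{n\eps}|\le B_\eps$ gives $|P_{n\eps}|\le B_{\eps}+1<K_{\eps}$ for every representative, which is exactly \eqref{eq:eventBoundEps} and hence \eqref{eq:eventBoundIntr}.

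Finally, since the bound $<K$ holds for \emph{every} representative of the element, it is insensitive to the particular representatives used to witness Def.~\ref{def:eventInf}, which establishes the closing assertion of the statement. I would underline that this is precisely the strengthening over Lem.~\ref{lem:kboundhypersum}: there only a bound of the single terms over $n\in\hypNs$ was available, whereas here the uniformity built into $\simeq_{\rho}$ upgrades it to a genuine inequality in $\rcrhoc$, valid over all $n\in\N$ simultaneously and independently of representatives.
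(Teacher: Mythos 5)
Your proof is correct, and its core is the same device as the paper's one-line proof: the paper simply sets $K:=R\vee\max_{n\le N}a_{n}$ (read: the join of $R$ with the finitely many moderate elements $\left|a_{n}(x-c)^{n}\right|\in\rti$, $n\le N$), which is your $S_{\eps}+\hat{R}_{\eps}+2$ with a maximum in place of a sum; both arguments split the indices at the $N$ of \eqref{eq:eventBound} and use Rem.~\ref{rem:radConv}.\ref{enu:whyR} to see that each initial term is moderate. What you do beyond the paper is your final step: the paper's proof is completely silent on the ``for all representatives'' clause, whereas you derive it from strong equivalence, noting that $q=r=0$ in \eqref{eq:strongEq} gives $\left|P_{n\eps}-\hat{P}_{n\eps}\right|\le1$ uniformly in $n$ for $\eps$ small, so the bound survives passage to any representative of the product element of $\rcrhoc$ and of $K$ (your margin $+2$ absorbs both corrections). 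Moreover, your caveat that this transfer only works at the level of the product nets, and not for independently re-chosen representatives of $(a_{n})_{{\rm c}}$, $x$, $c$, is not pedantry but essential: the literal reading of \eqref{eq:eventBoundEps} is in fact false. Take $a_{n\eps}:=1$, $c:=0$, $\hat{x}_{\eps}:=1$, which is eventually $\rti$-bounded, and the alternative representative $x_{\eps}:=1+\rho_{\eps}^{1/\eps}$ of $x=1$; then for every fixed $\eps$ we have $\sup_{n\in\N}\left|a_{n\eps}x_{\eps}^{n}\right|=\sup_{n\in\N}\left(1+\rho_{\eps}^{1/\eps}\right)^{n}=+\infty$, so no net $(K_{\eps})$ can satisfy \eqref{eq:eventBoundEps}. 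This is precisely the non-well-definedness of the product operation that the paper itself stresses before Def.~\ref{def:radCon}, and it means that \eqref{eq:eventBoundIntr}, as well as the closing sentence about Def.~\ref{def:eventInf}, can only be understood as you do, i.e.\ with the element $\left(\left(a_{n}(x-c)^{n}\right)\right)_{{\rm c}}\in\rcrhoc$ fixed by the representatives witnessing \eqref{eq:eventBound}. In short: you construct $K$ exactly as the paper does, and then add a representative-independence argument that the paper omits, together with a correct diagnosis of the only sense in which that independence can hold.
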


\begin{proof}
It suffices to set $K:=R\vee\max_{n\le N}a_{n}$, where $R\in\rti$
and $N\in\N$ come from \eqref{eq:eventBound}.
\end{proof}
Even if the case of the exponential HPS (see Example \ref{exa:Exponential})
shows that in general the set of convergence is not an interval, it
has very similar properties, at least if the gauge $\sigma$ is sufficiently
small:
\begin{thm}
\label{thm:absConvTopProp}Let $\sigma\leq\rho^{*}$ and $\hypersum{\rho}{\sigma}a_{n}(\bar{x}-c)^{n}$
be a convergent HPS whose sequence of summands $\left(a_{n}(\bar{x}-c)^{n}\right)_{n\in\N}$
is eventually $\rti$-bounded in $\rcrhoc$. Then for all $x\in B_{|\bar{x}-c|}(0)$
we have:
\begin{enumerate}
\item \label{enu:absConvUnif}The HPS converges absolutely at $x$, and
hence uniformly on every functionally compact $K\fcmp\overline{B}_{|\bar{x}-c|}(c)$;
\item \label{enu:eventuallyBound}$\left(a_{n}(x-c)^{n}\right)_{n\in\N}$
is eventually $\rti$-bounded in $\rcrhoc$;
\item \label{enu:notBord}If $|\hat{x}-c|=|\bar{x}-c|$, then not necessarily
$\hypersum{\rho}{\sigma}a_{n}(\hat{x}-c)^{n}$ converges.
\end{enumerate}
If $\hypersum{\rho}{\sigma}a_{n}(x-c)^{n}=[\sum_{n=0}^{\infty}a_{n\eps}(x_{\eps}-c_{\eps})^{n}]\in\rcrho$,
then:
\begin{enumerate}
\item \label{enu:underConv}$x\in\setconv{a_{n}}{c}$;
\item \label{enu:underInterior}$x$ is a sharply interior point, i.e.~$B_{s}(x)\subseteq\setconv{a_{n}}{c}$
for some $s\in\rti_{>0}$;
\item \label{enu:ConvConvex}$\setconv{a_{n}}{c}$ is $\rti$-convex, i.e.~if
also $y\in\setconv{a_{n}}{c}$, then $\forall t\in[0,1]:\ y+t(\bar{x}-y)\in\setconv{a_{n}}{c}$;
\item \label{enu:ConvConn}The set of convergence $\setconv{a_{n}}{c}$
is \emph{strongly connected}, i.e.~it is not possible to write it
as union of two non empty strongly disjoint sets, i.e.~such that
\begin{enumerate}[label=(\alph*)]
\item $A$, $B\subseteq\rti$, $A\ne\emptyset\ne B$,
\item $\exists\sup(A)$, $\exists\inf(B)$, $\sup(A)\le\inf(B)$,
\item $\setconv{a_{n}}{c}=A\cup B$,
\item $\exists m\in\N:\ B_{\diff\rho^{m}}(A)\cap B_{\diff\rho^{m}}(B)=\emptyset$.
\end{enumerate}
\end{enumerate}
\end{thm}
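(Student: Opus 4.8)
The plan is to deduce every statement from a single Abel-type estimate at the reference point $\bar x$, just as in the classical theory, with the gauge condition $\sigma\le\rho^*$ entering only to pass from $\eps$-wise convergence to convergence of the hyperseries (so that, as throughout Sec.~\ref{subsec:Examples}, we may take $c=0$ and read $B_{|\bar x-c|}(0)$ as the points with $|x-c|<|\bar x-c|$). First I would invoke Lem.~\ref{lem:KstrongBound} to turn the eventual $\rti$-boundedness of $\left(a_n(\bar x-c)^n\right)_{n\in\N}$ into a uniform bound $\left|a_{n\eps}(\bar x_\eps-c_\eps)^n\right|<K_\eps$ for all $n\in\N$ and $\eps$ small, with $K=[K_\eps]\in\rcrho$. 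For $|x-c|<|\bar x-c|$ the strictness gives $h:=|x-c|/|\bar x-c|<1$ in $\rti$, hence a representative with $h_\eps\le 1-\rho_\eps^q$ for some $q\in\N$, and the factorisation $\left|a_{n\eps}(x_\eps-c_\eps)^n\right|<K_\eps h_\eps^n$ reduces everything to a geometric series. Absolute convergence at $x$ then follows by estimating the hyperfinite tails $\sum_{n=N_\eps}^{M_\eps}K_\eps h_\eps^n\le K_\eps h_\eps^{N_\eps}/(1-h_\eps)$ and checking $\hyperlimarg{\rho}{\sigma}{N}h^{N}=0$; this is the one place where $\sigma\le\rho^*$ is essential, as it forces the $\hypNs$-index $N_\eps$ to grow fast enough against $\rho_\eps^q$ (the tail mechanism of Example~\ref{exa:Exponential} and \cite{TiGi}), and it is the step I expect to be the main obstacle. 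Uniformity of this bound over a functionally compact $K\fcmp\overline B_{|\bar x-c|}(c)$ with $\sup_{x\in K}|x-c|<|\bar x-c|$ gives \ref{enu:absConvUnif}; and since $h<1$ yields $\left|a_{n\eps}(x_\eps-c_\eps)^n\right|<K_\eps$ for all $n$, the summands $\left(a_n(x-c)^n\right)_n$ are again eventually $\rti$-bounded, proving \ref{enu:eventuallyBound}.

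For \ref{enu:notBord} I would give a counterexample rather than argue abstractly: take the weakly $\rho$-moderate coefficients $a_n=\frac1n$ ($n\ge1$), so $\radconv{a_n}=1$. At $\bar x=-1$ the alternating harmonic partial sums are bounded (hence moderate) and $\eps$-wise convergent, giving a convergent HPS under $\sigma\le\rho^*$ with $\left(a_n\bar x^n\right)_n$ eventually $\rti$-bounded; but at $\hat x=1$, with $|\hat x-c|=|\bar x-c|=1$, the partial sums $\sum_{n\le N}\frac1n\sim\log N$ are not $\rho$-moderate for $N\in\hypNs$, so the hyperseries diverges. Turning to the second block, I assume the equality $\hypersum{\rho}{\sigma}a_n(x-c)^n=[\sum_{n}a_{n\eps}(x_\eps-c_\eps)^n]\in\rcrho$, which \emph{is} Def.~\ref{def:setOfConv}.\ref{enu:ConvConv}. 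Condition \ref{enu:ConvRadConv} holds since $|x-c|<|\bar x-c|<\radconv{a_n}$ (as $\bar x\in\setconv{a_n}{c}$ by Def.~\ref{def:convHps}); condition \ref{enu:ConvFHPS} is the tail moderateness already obtained; and \ref{enu:ConvDerMod} follows by the same comparison applied to the differentiated series, picking an intermediate invertible radius $|x-c|<|\tilde x-c|<|\bar x-c|$ (possible because $|\bar x-c|-|x-c|\ge\rho^q$) so that, via \eqref{eq:der_k} and the binomial-series identity of Thm.~\ref{thm:setConvNotTrivial}, the $k$-th derivative net is dominated by $k!\,\tilde K_\eps|\tilde x_\eps-c_\eps|^{-k}(1-g_\eps)^{-k-1}\in\R_{\rho}$ with $g:=|x-c|/|\tilde x-c|<1$. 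This yields \ref{enu:underConv}.

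For the topological statements I would first record that, by \ref{enu:absConvUnif} together with $\sigma\le\rho^*$, every point of the open ball $B_{|\bar x-c|}(c)$ has an absolutely convergent $\eps$-wise series summing to a moderate net, and the $\sigma\le\rho^*$ tail estimate upgrades this to Def.~\ref{def:setOfConv}.\ref{enu:ConvConv}; hence $B_{|\bar x-c|}(c)\subseteq\setconv{a_n}{c}$. Then \ref{enu:underInterior} is immediate, since $x$ lies in this open ball and a sharp ball $B_s(x)$ with $s\in\rti_{>0}$ fits inside it. For \ref{enu:ConvConvex}, given $y\in\setconv{a_n}{c}$ I control the segment $z(t)=y+t(\bar x-y)$ by $|z(t)-c|\le\max(|y-c|,|\bar x-c|)$; when $|y-c|\le|\bar x-c|$ the interior of the segment lies strictly inside $B_{|\bar x-c|}(c)$ (equality of distance occurring only at the endpoint $\bar x$), so it is contained in $\setconv{a_n}{c}$, both endpoints already belonging to it. Finally \ref{enu:ConvConn} follows from this star-shapedness toward $\bar x$: a decomposition $\setconv{a_n}{c}=A\cup B$ with $A,B\ne\emptyset$, $\sup A\le\inf B$ and $B_{\diff\rho^m}(A)\cap B_{\diff\rho^m}(B)=\emptyset$ would force, for $a\in A$ and $b\in B$, the segments joining $a$ and $b$ to $\bar x$ to lie in $A\cup B$ while crossing the $\diff\rho^m$-gap between $A$ and $B$, so some intermediate point of $\setconv{a_n}{c}$ would be $\diff\rho^m$-far from both $A$ and $B$, a contradiction. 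Besides the hyperlimit estimate above, the delicate points are the uniformity of the geometric bound over functionally compact sets and, in the convexity argument, the verification that the boundary sphere of $B_{|\bar x-c|}(c)$ is reached only at segment endpoints.
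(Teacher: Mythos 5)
Your treatment of the first five claims and of the counterexample follows essentially the paper's own route: Lem.~\ref{lem:KstrongBound} to produce the uniform bound $K_{\eps}$, the geometric domination $\left|a_{n\eps}(x_{\eps}-c_{\eps})^{n}\right|<K_{\eps}h_{\eps}^{n}$ with $h:=|x-c|/|\bar{x}-c|<1$, the hypothesis $\sigma\le\rho^{*}$ precisely to make the dominating geometric hyperseries converge (the paper simply cites the direct comparison test \cite[Thm.~22]{TiGi} instead of redoing the tail estimate, and obtains uniform convergence from \cite[Thm.~74]{TI}, i.e.~from the general fact that pointwise convergence of GSF implies uniform convergence on functionally compact sets; note that your substitute condition $\sup_{x\in K}|x-c|<|\bar{x}-c|$ does not cover every $K\fcmp\overline{B}_{|\bar{x}-c|}(c)$ allowed in the statement), the identical alternating/harmonic example for \ref{enu:notBord}, and the same verification of the four conditions of Def.~\ref{def:setOfConv} for \ref{enu:underConv} and \ref{enu:underInterior}. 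Up to that point the proposal is sound and matches the paper.

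The genuine gap is in your proof of \ref{enu:ConvConvex}, and it propagates to \ref{enu:ConvConn}. You claim that when $|y-c|\le|\bar{x}-c|$ the interior of the segment $z(t)=y+t(\bar{x}-y)$ lies strictly inside $B_{|\bar{x}-c|}(c)$, with ``equality of distance occurring only at the endpoint $\bar{x}$''. In $\rti$ this is false: take $L\subzero I$ with $L^{c}\subzero I$ and $t=[t_{\eps}]$ with $t_{\eps}:=1$ for $\eps\in L$ and $t_{\eps}:=0$ otherwise. Then $t\in[0,1]$ and $t\ne0$, $t\ne1$, but $z(t)=_{L}\bar{x}$, so $|\bar{x}-c|-|z(t)-c|$ vanishes on the co-final set $L$ and is therefore not invertible; hence $z(t)\notin B_{|\bar{x}-c|}(c)$ even though $t$ is an interior parameter (the same happens, even when $|y-c|<|\bar{x}-c|$, for any $t$ infinitesimally close to $1$ on some subpoint). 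Consequently the ball inclusion $B_{|\bar{x}-c|}(c)\subseteq\setconv{a_{n}}{c}$ cannot by itself yield star-shapedness: the segment genuinely leaves the sharp ball at non-endpoint parameters, and this is not a ``delicate verification'' to be sharpened but a false statement. This is exactly why the paper proves \ref{enu:ConvConvex} via the trichotomy law \cite[Lem.~7.(iii)]{MTAG}: it decomposes $I$ into co-final subsets on which $z(t)$ is comparable with $0$, $y$ and $\bar{x}$, verifies the conditions of Def.~\ref{def:setOfConv} separately on each such subset (using $y\in\setconv{a_{n}}{c}$ where $z(t)$ sticks to $y$, $\bar{x}\in\setconv{a_{n}}{c}$ where it sticks to $\bar{x}$, and the ball argument where it is invertibly inside), and then glues, which is legitimate because all four conditions are $\forall^{0}\eps$-statements. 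For \ref{enu:ConvConn}, routing the path through $\bar{x}$ is fine (it is indeed all that \ref{enu:ConvConvex} literally provides), but since intervals of $\rti$ are disconnected in the sharp topology, the vague claim that the path must ``cross'' the gap proves nothing: you need the explicit separating witness, as in the paper's choice $x=\frac{1}{2}\left(\sup(A)+\inf(B)\right)$, which lies on the segments toward $\bar{x}$ by a subpoint splitting and belongs to neither $A$ nor $B$ because the gap condition forces $\sup(A)<\inf(B)$.
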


\begin{proof}
Without loss of generality we can assume $c=0$. From \cite[Lem.~5.(ii)]{MTAG},
we have either $\bar{x}=_{L}0$ or $|\bar{x}|>0$ for some $L\subzero I$.
The first case is actually impossible because $0\le|x|<|\bar{x}|=_{L}0$.
We can hence work only in the latter case $|\bar{x}|>0$. From Lem.~\ref{lem:KstrongBound},
we have $\forall^{0}\eps\,\forall n\in\N:\ |a_{n\eps}\bar{x}_{\eps}^{n}|\le K_{\eps}$.
Setting $h:=\left|\frac{x}{\bar{x}}\right|$, we have $h<1$ because
$|x|\in B_{|\bar{x}|}(0)$, and
\begin{equation}
\forall^{0}\eps\,\forall n\in\N:\ |a_{n\eps}x_{\eps}^{n}|=\left|a_{n\eps}\bar{x}_{\eps}^{n}\right|.\left|\frac{x_{\eps}}{\bar{x}_{\eps}}\right|^{n}<K_{\eps}h_{\eps}^{n}.\label{eq:km-bound}
\end{equation}

\noindent Thereby, $\sum_{n=N}^{M}|a_{n}x^{n}|\le\sum_{n=N}^{M}Kh^{n}$
for all $N$, $M\in\hypNs$. By the direct comparison test \cite[Thm. 22]{TiGi},
the HPS $\hypersum{\rho}{\sigma}a_{n}x^{n}$ converges absolutely
because $\hypersum{\rho}{\sigma}Kh^{n}$ converges since $\sigma\leq\rho^{*}$
and $h<1$. Finally, \cite[Thm.~74]{TI} yields that pointwise convergence
implies uniform convergence on functionally compact sets. This proves
\ref{enu:absConvUnif}.

\ref{enu:eventuallyBound}: From \eqref{eq:km-bound} it follows that
$\sum_{n=0}^{+\infty}|a_{n\eps}x_{\eps}^{n}|=:R_{\eps}$ converges
and is $\rho$-moderate. This implies condition \eqref{eq:eventBound}.

For \ref{enu:notBord}, it suffices to consider that $\hypersum{\rho}{\rho}\frac{(-1)^{n}}{n}$
converges (see \cite[Sec.~3.6]{TiGi}) whereas $\hypersum{\rho}{\rho}\frac{1}{n}$
does not by \cite[Thm.~18]{TiGi}. Note however, that for $x=1$,
we have $|x|=\radconv{\frac{1}{n}}$ so that condition Def.~\ref{def:setOfConv}.\ref{enu:ConvRadConv}
does not hold.

\ref{enu:underConv}: From the assumptions, $x\in B_{|\bar{x}-c|}(0)$,
$|\bar{x}-c|<\radconv{a_{n}}$, and hence Def.~\ref{def:setOfConv}.\ref{enu:ConvRadConv}
and Def.~\ref{def:setOfConv}.\ref{enu:ConvConv} follow. Note that
Def.~\ref{def:setOfConv}.\ref{enu:ConvFHPS} can be proved as above
from \eqref{eq:km-bound}. Finally, if $[\hat{x}_{\eps}]=x$ and $k\in\N_{>0}$,
we have
\begin{align}
\frac{\diff{}^{k}}{\diff{x}^{k}}\left(\sum_{n=0}^{+\infty}a_{n\eps}\hat{x}_{\eps}^{n}\right) & \le\sum_{n=k}^{+\infty}|a_{n\eps}|k!{n \choose k}\left|\frac{\hat{x}_{\eps}}{\bar{x}_{\eps}}\right|^{n-k}|\bar{x}_{\eps}|^{n-k}\nonumber \\
 & \le K_{\eps}|\bar{x}_{\eps}|^{-k}\sum_{n=k}^{+\infty}k!{n \choose k}\left|\frac{\hat{x}_{\eps}}{\bar{x}_{\eps}}\right|^{n-k}\in\R_{\rho},\label{eq:estimDer}
\end{align}
where we used Lem.~\ref{lem:KstrongBound}, and hence Def.~\ref{def:setOfConv}.\ref{enu:ConvDerMod}
also holds.

\ref{enu:underInterior}: For $s:=|\bar{x}|-|x|>0$ and $\hat{x}\in B_{s}(x)$,
we have $|\hat{x}|\le|\hat{x}-x|+|x|<s+|x|=|\bar{x}|$, and hence
$\hat{x}\in\setconv{a_{n}}{c}$ from \ref{enu:underConv}.

\ref{enu:ConvConvex}: Setting $\hat{x}:=y+t(\bar{x}-y)$, we have
$y\le\hat{x}\le\bar{x}$. We can use trichotomy law \cite[Lem.~7.(iii)]{MTAG}
to distinguish the cases $y=_{L}0$ or $y>_{L}0$ or $y>_{L}0$ for
$L\subzero I$. The latter has to be subdivided into the sub-cases
$\hat{x}>_{M}0$ or $\hat{x}=_{M}0$ or $\hat{x}<_{M}0$ with $M\subzero L$,
i.e.~using \cite[Lem.~7.(iii)]{MTAG} for the ring $\rti|_{L}$.
Finally, the latter of these sub-cases has to be further subdivided
into $\hat{x}>_{K}y$ or $\hat{x}<_{K}y$ or $\hat{x}=_{K}y$ with
$K\subzero M$. In all these cases we can prove Def.~\ref{def:setOfConv}
in the corresponding co-final set.

\ref{enu:ConvConn}: By contradiction, if $a\in A$ and $b\in B$,
then $x:=\frac{1}{2}\left(\sup(A)+\inf(B)\right)$ lies in the segment
$[a,b]\subseteq\setconv{a_{n}}{c}$ by \ref{enu:ConvConvex}. But
property $B_{\diff\rho^{m}}(A)\cap B_{\diff\rho^{m}}(B)$ implies
that $\sup(A)<\inf(B)$ and hence $x\notin A\cup B=\setconv{a_{n}}{c}$.
\end{proof}
In spite of Thm.~\ref{thm:absConvTopProp}.\ref{enu:underInterior},
it remains open the problem whether the set of convergence is always
a sharply open set or not. Using the previous theorem, this problem
depends, for each point $x$ in the set of convergence, on the existence
of a point $\bar{x}$ satisfying its assumptions. However, $x=1\in\setconv{\frac{\delta^{(n)}(0)}{n!}}{0}$
but Rem.~\ref{rem:eventBound}.\ref{enu:eventBoundDelta} shows that
$\left(\frac{\delta^{(n)}(0)}{n!}x^{n}\right)_{n\in\N}$ is not eventually
$\rti$-bounded in $\rcrhoc$, so that such a point $\bar{x}$ in
this case does not exist.
\begin{cor}
\label{cor:convTOsetconv}Let $\sigma\leq\rho^{*}$ and let $R$ be
the set of all the numbers of the form $s=|\bar{x}-c|$ for some $\bar{x}\in\rti$
satisfying:
\begin{enumerate}
\item \label{enu:supConv}${\textstyle \hypersum{\rho}{\sigma}a_{n}(\bar{x}-c)^{n}}$
is a convergent HPS,
\item \label{enu:supEventB}$(a_{n}(\bar{x}-c)^{n})_{n\in\N}$ is eventually
$\rti$-bounded in $\rcrhoc$.
\end{enumerate}
\noindent If $\exists\sup R=:r\in\rti$, then $B_{r}(c)\subseteq\setconv{a_{n}}{c}$,
the HPS $\hypersum{\rho}{\sigma}a_{n}(x-c)^{n}$ converges absolutely
for all $x\in B_{r}(c)$ and uniformly on every functionally compact
$K\fcmp\overline{B}_{r}(c)$.
\begin{proof}
Without loss of generality, we assume $c=0$, and let $x\in B_{r}(c)$.
Since $|x|<r$, by the definition of sharp supremum, (see \cite{MTAG})
there exist $s=\left|\bar{x}\right|$ such that $|x|<\left|\bar{x}\right|\leq r$
and such that \ref{enu:supConv} and \ref{enu:supEventB} hold. The
conclusions then follow by Thm.~\ref{thm:absConvTopProp}.
\end{proof}
\end{cor}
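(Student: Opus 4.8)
The plan is to reduce every assertion to Theorem~\ref{thm:absConvTopProp} by exploiting the approximation property of the sharp supremum: for each target point of $B_r(c)$, and each functionally compact set in $\overline{B}_r(c)$, I would produce a witness $\bar{x}$ from the set $R$ whose radius of good behaviour strictly dominates the target distance, so that the target lies in an open ball $B_{|\bar{x}-c|}(c)$ to which the theorem directly applies. As a preliminary step I translate so that $c=0$, which is harmless by the observation following Def.~\ref{def:setOfConv} that $x\in\setconv{a_n}{c}$ iff $x-c\in\setconv{a_n}{0}$.

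For the pointwise assertions, fix $x\in B_r(0)$, so that $|x|<r=\sup R$. Choosing $q\in\N$ with $\diff\rho^q\le r-|x|$ (possible since $r-|x|>0$ is invertible), the approximation property of the sharp supremum (\cite{MTAG}) yields $s\in R$ with $s>r-\diff\rho^q\ge|x|$; writing $s=|\bar{x}|$, the witness $\bar{x}$ satisfies both hypotheses of Theorem~\ref{thm:absConvTopProp}, namely that $\hypersum{\rho}{\sigma}a_n\bar{x}^n$ is a convergent HPS and that $(a_n\bar{x}^n)_n$ is eventually $\rti$-bounded in $\rcrhoc$. Since $|x|<|\bar{x}|$ we have $x\in B_{|\bar{x}|}(0)$, whence part~\ref{enu:absConvUnif} of the theorem gives absolute convergence of the HPS at $x$, proving the absolute-convergence claim. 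To place $x$ in $\setconv{a_n}{0}$ I then invoke the second block of the theorem, for which it remains to identify the hyperseries with its $\eps$-wise sum, i.e.~to establish Def.~\ref{def:setOfConv}.\ref{enu:ConvConv}. This is exactly where the domination $|a_{n\eps}x_\eps^n|\le K_\eps h_\eps^n$ with $h:=|x/\bar{x}|<1$, coming from Lem.~\ref{lem:KstrongBound} applied at $\bar{x}$, does the work: the tail of the $\eps$-wise series is majorized by a convergent geometric hyperseries, and under $\sigma\le\rho^*$ this forces $\hypersum{\rho}{\sigma}a_n x^n=[\sum_{n=0}^{+\infty}a_{n\eps}x_\eps^n]\in\rcrho$. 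Part~\ref{enu:underConv} then gives $x\in\setconv{a_n}{0}$, and as $x$ was arbitrary, $B_r(0)\subseteq\setconv{a_n}{0}$.

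For uniform convergence, let $K\fcmp\overline{B}_r(0)$. By the meaning of functional compact containment, $K$ is functionally compact and contained in the sharp interior $B_r(0)$; since functionally compact sets attain their sharp suprema, the maximum $d:=\max_{y\in K}|y|$ is realized at a point of $B_r(0)$, so that $d<r$. Choosing $q\in\N$ with $\diff\rho^q\le r-d$ and applying the sharp-supremum approximation again furnishes $\bar{x}$ with $|\bar{x}|>r-\diff\rho^q\ge d$, i.e.~$d<|\bar{x}|\le r$, whence $K\fcmp\overline{B}_{|\bar{x}|}(0)$; the uniform-convergence clause of Theorem~\ref{thm:absConvTopProp}.\ref{enu:absConvUnif} then yields uniform convergence of the HPS on $K$, completing the proof.

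The step I expect to be most delicate is the identification of the hyperseries with the $\eps$-wise sum required for membership in the set of convergence: absolute convergence alone controls only hyperfinite partial sums, whereas Def.~\ref{def:setOfConv}.\ref{enu:ConvConv} demands equality with the ordinary ($\eps$-wise) limit, and bridging the two is precisely the hyperlimit-versus-$\eps$-wise-limit issue discussed before Thm.~\ref{thm:setConvNotTrivial}. It is governed by the dominated-convergence behaviour of the majorizing geometric hyperseries and is where the hypothesis $\sigma\le\rho^*$ becomes indispensable; one must also take care that the eventual bound at the witness $\bar{x}$ transfers uniformly in $n$ to the target $x$ through the factor $h^n$. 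A minor secondary point is justifying the strict inequality $d<r$ from functional compact containment, which is what permits invoking the theorem on the slightly larger ball $\overline{B}_{|\bar{x}|}(0)$.
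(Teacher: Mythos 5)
Your overall strategy is the same as the paper's: reduce to $c=0$, use the approximation property of the sharp supremum to extract, for each $x\in B_{r}(0)$, a witness $\bar{x}$ with $|x|<|\bar{x}|\le r$ satisfying conditions \ref{enu:supConv} and \ref{enu:supEventB}, and then quote Thm.~\ref{thm:absConvTopProp}. Your explicit verification of Def.~\ref{def:setOfConv}.\ref{enu:ConvConv} is in fact an improvement on the paper's one-line proof: the membership $x\in\setconv{a_{n}}{c}$ comes from the second block of Thm.~\ref{thm:absConvTopProp}, which is \emph{conditional} on the equality $\hypersum{\rho}{\sigma}a_{n}x^{n}=\left[\sum_{n=0}^{+\infty}a_{n\eps}x_{\eps}^{n}\right]$, and the paper does not say why this hypothesis holds; your argument (the bound $|a_{n\eps}x_{\eps}^{n}|\le K_{\eps}h_{\eps}^{n}$ from Lem.~\ref{lem:KstrongBound} applied at $\bar{x}$, so that the $\eps$-wise sum is moderate and the hyperfinite partial sums differ from it by at most $Kh^{N+1}/(1-h)\to0$ under $\sigma\le\rho^{*}$, $1-h$ being invertible because $|x|<|\bar{x}|$) is exactly the missing justification, and it is correct.

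The one step that does not hold as written is in your uniform-convergence argument: $K\fcmp\overline{B}_{r}(c)$ means only that $K$ is functionally compact and $K\subseteq\overline{B}_{r}(c)$; it does \emph{not} force $K\subseteq B_{r}(c)$. For instance $K=\{y\}$ with $|y-c|=r$ is functionally compact and contained in the closed ball, but then $d:=\max_{y\in K}|y-c|=r$, and no witness $\bar{x}$ with $d<|\bar{x}-c|\le r$ can exist, so your transfer to $K\fcmp\overline{B}_{|\bar{x}-c|}(c)$ breaks down. (This tension is already present in the statement being proved and in Thm.~\ref{thm:absConvTopProp} itself, whose clause \ref{enu:notBord} allows non-convergence at points with $|\hat{x}-c|=|\bar{x}-c|$; the paper's proof is simply silent about this clause.) Your argument becomes correct verbatim if the functionally compact sets are taken inside the open ball, i.e.~$K\fcmp B_{r}(c)$: then the maximum $d$ of the functionally compact image $\left\{ |y-c|\mid y\in K\right\} $ is attained at a point of $K\subseteq B_{r}(c)$, hence $r-d$ is invertible, and your choice of $q$ with $\diff\rho^{q}\le r-d$ followed by the supremum approximation goes through.
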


Property Thm.~\ref{thm:absConvTopProp}.\ref{enu:underConv} can
also be written as a characterization of the set of convergence:
\begin{cor}
\label{cor:setConv_sigma<=00003Drho*}Let $\sigma\leq\rho^{*}$, $(a_{n})_{{\rm c}}=[a_{n\eps}]_{{\rm c}}\in\rcrhoc$,
$c=[c_{\eps}]\in\rcrho$ such that $\hypersum{\rho}{\sigma}a_{n}(\bar{x}-c)^{n}$
is a convergent HPS whose sequence of summands $\left(a_{n}(\bar{x}-c)^{n}\right)_{n\in\N}$
is eventually $\rti$-bounded in $\rcrhoc$. If $x\in B_{|\bar{x}-c|}(0)$,
then $x=[x_{\eps}]\in\setconv{a_{n}}{c}$ if and only if
\[
\hypersum{\rho}{\sigma}a_{n}(x-c)^{n}=\left[\sum_{n=0}^{\infty}a_{n\eps}(x_{\eps}-c_{\eps})^{n}\right]\in\rcrho.
\]
\end{cor}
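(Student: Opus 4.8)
The plan is to recognize this corollary as a reformulation of Thm.~\ref{thm:absConvTopProp} in ``if and only if'' form, so that both implications reduce to results already in hand. As a preliminary reduction, I would pass to the case $c=0$, using the observation (recorded just after Def.~\ref{def:setOfConv}) that $x\in\setconv{a_n}{c}$ if and only if $x-c\in\setconv{a_n}{0}$.

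For the forward implication, I would assume $x\in\setconv{a_n}{c}$ and read off the desired equality directly from Def.~\ref{def:setOfConv}.\ref{enu:ConvConv}, which is built into membership in the set of convergence. The only subtlety is that Def.~\ref{def:setOfConv} asserts this equality only for \emph{some} representatives, whereas the corollary phrases it for the prescribed representatives $[x_\eps]=x$, $[c_\eps]=c$, $[a_{n\eps}]_{\rm c}=(a_n)_{\rm c}$. I would close this gap by invoking Lem.~\ref{lem:indepRepr}.\ref{enu:series}, which guarantees that once $x\in\setconv{a_n}{c}$ holds, \emph{all} representatives satisfy the conditions of Def.~\ref{def:setOfConv}, in particular \ref{enu:ConvConv}.

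For the backward implication, I would assume $\hypersum{\rho}{\sigma}a_n(x-c)^n=\left[\sum_{n=0}^{\infty}a_{n\eps}(x_\eps-c_\eps)^n\right]\in\rcrho$ and simply match hypotheses: the standing assumptions of the corollary---$\sigma\le\rho^*$, that $\hypersum{\rho}{\sigma}a_n(\bar{x}-c)^n$ is a convergent HPS with summands $\left(a_n(\bar{x}-c)^n\right)_{n\in\N}$ eventually $\rti$-bounded in $\rcrhoc$, and $x\in B_{|\bar{x}-c|}(0)$---are exactly those of Thm.~\ref{thm:absConvTopProp}. Its conclusion \ref{enu:underConv} then yields $x\in\setconv{a_n}{c}$ at once, the proof of that part having already verified all four conditions of Def.~\ref{def:setOfConv} (using \eqref{eq:km-bound} for \ref{enu:ConvFHPS} and \eqref{eq:estimDer} for \ref{enu:ConvDerMod}).

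I expect no genuine obstacle here: the substantive work lives entirely in Thm.~\ref{thm:absConvTopProp}, and this corollary merely repackages its part \ref{enu:underConv}. The single point deserving explicit mention is the independence-of-representatives step in the forward direction, which is precisely what Lem.~\ref{lem:indepRepr} was designed to supply.
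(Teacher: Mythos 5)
Your proof is correct and follows essentially the same route the paper intends: the paper states this corollary without a separate proof, presenting it as a repackaging of Thm.~\ref{thm:absConvTopProp}.\ref{enu:underConv}, which is exactly your backward implication. Your forward implication---reading off Def.~\ref{def:setOfConv}.\ref{enu:ConvConv} and closing the representative-independence gap via Lem.~\ref{lem:indepRepr}---is precisely the intended justification, and you are right that this is the one step worth making explicit.
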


\subsection{Algebraic properties of hyper-power series}

In this section, we extend to HPS the classical results concerning
algebraic operations and composition of power series.
\begin{thm}
\label{thm:algProperties}Assume that $\hypersum{\rho}{\sigma}a_{n}\left(x-c\right){}^{n}$
and $\hypersum{\rho}{\sigma}b_{n}\left(x-c\right){}^{n}$ are two
convergent HPS, then:
\begin{enumerate}
\item \label{enu:prodScalar}For all $r\in\rti$, the product $r\cdot\hypersum{\rho}{\sigma}a_{n}\left(x-c\right){}^{n}$
is a convergent HPS with $\radconv{ra_{n}}\ge\radconv{a_{n}}$, and
\begin{equation}
r\cdot\hypersum{\rho}{\sigma}a_{n}\left(x-c\right){}^{n}=\hypersum{\rho}{\sigma}ra_{n}\left(x-c\right){}^{n}.\label{eq:prodScalar}
\end{equation}
\item \label{enu:sum_conv}The sum of these HPS is a convergent HPS with
\[
\radconv{a_{n}+b_{n}}\ge\min(\radconv{a_{n}},\radconv{b_{n}}),
\]
and
\begin{equation}
\hypersum{\rho}{\sigma}\left(a_{n}+b_{n}\right)\left(x-c\right)^{n}=\hypersum{\rho}{\sigma}a_{n}\left(x-c\right){}^{n}+\hypersum{\rho}{\sigma}b_{n}\left(x-c\right){}^{n}.\label{eq:sum}
\end{equation}
\item \label{enu:prod}For all $\bar{x}\in B_{|x-c|}(c)$, the product of
these HPS converges to their Cauchy product:
\begin{equation}
\left(\hypersum{\rho}{\sigma}a_{n}\left(\bar{x}-c\right){}^{n}\right)\cdot\left(\hypersum{\rho}{\sigma}b_{n}\left(\bar{x}-c\right){}^{n}\right)=\hypersum{\rho}{\sigma}\sum_{k=0}^{n}a_{k}b_{n-k}\left(\bar{x}-c\right){}^{n},\label{eq:CauchyProd}
\end{equation}
which is still a convergent HPS with radius of convergence greater
or equal to $\min(\radconv{a_{n}},\radconv{b_{n}})$.
\item \label{enu:division}Let $\left[a_{n\eps}\right]_{\text{c}}=(a_{n})_{{\rm c}}$
and $\left[b_{n\eps}\right]_{\text{c}}=(b_{n})_{{\rm c}}$ be representatives
of the coefficients of the given HPS. Assume that $b_{0}=[b_{0\eps}]\in\rti$
is invertible, and recursively define (for $\eps$ small) $d_{0\eps}:=\frac{a_{0\eps}}{b_{0\eps}}$,
\begin{equation}
d_{n\eps}:=\frac{1}{b_{0\eps}}\left(a_{n\eps}-\sum_{l=1}^{n}b_{l\eps}d_{n-l,\eps}\right)\qquad\forall n\in\N_{>0}.\label{eq:defDivCoeff}
\end{equation}
Then coefficients $(d_{n})_{{\rm c}}\in\rcrhoc$ define a convergent
HPS with radius of convergence greater or equal to $\min(\radconv{a_{n}},\radconv{b_{n}})$
such that for all $\bar{x}\in B_{|x-c|}(c)$, if $\hypersum{\rho}{\sigma}b_{n}\left(\bar{x}-c\right){}^{n}$
is invertible, then
\begin{equation}
\frac{\hypersum{\rho}{\sigma}a_{n}\left(\bar{x}-c\right){}^{n}}{\hypersum{\rho}{\sigma}b_{n}\left(\bar{x}-c\right){}^{n}}=\hypersum{\rho}{\sigma}d_{n}\left(\bar{x}-c\right){}^{n}.\label{eq:division}
\end{equation}
\end{enumerate}
\end{thm}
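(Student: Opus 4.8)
The plan is to reduce each identity to the corresponding classical fact about $\eps$-fixed power series and then lift it to the level of hyperseries by verifying the conditions of Def.~\ref{def:setOfConv} and invoking independence from representatives (Lem.~\ref{lem:indepRepr}). In every item the first, routine, task is to check that the new coefficients lie in $\rcrhoc$ and to bound their radius of convergence. For this I work $\eps$-wise: choosing a common $Q,R$ with $|a_{n\eps}|,|b_{n\eps}|\le\rho_\eps^{-nQ-R}$, the nets $(ra_{n\eps})$, $(a_{n\eps}+b_{n\eps})$ and $(\sum_{k=0}^{n}a_{k\eps}b_{n-k,\eps})$ are immediately weakly $\rho$-moderate, the only point needing care being that the combinatorial factor $n+1$ in the Cauchy product is absorbed since $n+1\le\rho_\eps^{-n}$ for $\eps$ small, uniformly in $n$. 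The radius estimates then follow from the classical root-test inequalities $\limsup_n|ra_{n\eps}|^{1/n}=\limsup_n|a_{n\eps}|^{1/n}$ (where $r_\eps\neq0$), $\limsup_n|a_{n\eps}+b_{n\eps}|^{1/n}\le\max_n\big(\limsup|a_{n\eps}|^{1/n},\limsup|b_{n\eps}|^{1/n}\big)$ and the analogous bound for the Cauchy product; passing to $\rhoext$ through the order of Def.~\ref{def:radCon} yields $\radconv{ra_n}\ge\radconv{a_n}$ and $\radconv{a_n+b_n},\ \radconv{\sum_{k}a_{k}b_{n-k}}\ge\min(\radconv{a_n},\radconv{b_n})$.

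Items \ref{enu:prodScalar} and \ref{enu:sum_conv} are then essentially linearity. Since $x\in\setconv{a_n}{c}$ (and $\setconv{b_n}{c}$), each hyperseries equals its $\eps$-wise full sum by Def.~\ref{def:setOfConv}.\ref{enu:ConvConv}; multiplying by the moderate number $r$, respectively adding, commutes both with the $\eps$-wise summation and with the hyperlimit defining $\hypersum{\rho}{\sigma}(\cdot)$. To obtain $x\in\setconv{ra_n}{c}$ and $x\in\setconv{a_n+b_n}{c}$ I verify the four conditions of Def.~\ref{def:setOfConv} directly: condition \ref{enu:ConvRadConv} is the radius inequality just proved, while the formal-HPS, convergence and moderate-derivative conditions are inherited from those for $(a_n)$ and $(b_n)$ by factoring out $r$ (resp.\ splitting the sum), every estimate being a scalar multiple or a sum of a moderate net.

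The core of the theorem is \ref{enu:prod}, and I expect the lifting step here to be the main obstacle. The restriction $\bar{x}\in B_{|x-c|}(c)$ provides exactly the room for absolute convergence: writing $y_\eps:=x_\eps-c_\eps$, $\bar y_\eps:=\bar x_\eps-c_\eps$ and $h:=|\bar y/y|$, sharp strictness of $|\bar y|<|y|$ gives $h<1$, and $\bar y_\eps$ lies strictly inside both $\eps$-wise discs of convergence, so the classical Cauchy product theorem gives, for $\eps$ small, the $\eps$-wise identity $\sum_n\big(\sum_{k=0}^{n}a_{k\eps}b_{n-k,\eps}\big)\bar y_\eps^{n}=\big(\sum_n a_{n\eps}\bar y_\eps^{n}\big)\big(\sum_n b_{n\eps}\bar y_\eps^{n}\big)$. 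It remains to promote this to hyperseries, i.e.~to prove $\bar x\in\setconv{a_n}{c}\cap\setconv{b_n}{c}\cap\setconv{\sum_{k}a_{k}b_{n-k}}{c}$. For the $a$-series I start from Lem.~\ref{lem:kboundhypersum}, giving $K\in\rcrho$ with $|a_n(x-c)^n|<K$ for all $n\in\hypNs$; multiplying by $h^n$ yields $|a_n(\bar x-c)^n|<Kh^n$ for $n\in\hypNs$, so the summands are dominated by those of the geometric hyperseries $\hypersum{\rho}{\sigma}Kh^n$. The direct comparison test \cite[Thm.~22]{TiGi} then gives absolute convergence at $\bar x$, and the same argument handles the $b$-series and the Cauchy coefficients (whose summands at $\bar x$ are bounded by $(n+1)K^2h^n$, still geometrically dominated for $n\in\hypNs$). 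The delicate point is precisely that $\hypersum{\rho}{\sigma}Kh^n$ converges only when $\sigma$ is small enough relative to $\rho$ (e.g.~$\sigma\le\rho^{*}$, cf.~Example~\ref{exa:Geometric-series}), the very hypothesis underlying Thm.~\ref{thm:absConvTopProp} and Cor.~\ref{cor:convTOsetconv}. Once the three memberships hold, each hyperseries coincides with its $\eps$-wise sum by Def.~\ref{def:setOfConv}.\ref{enu:ConvConv}, and combining this with the displayed $\eps$-wise Cauchy identity yields \eqref{eq:CauchyProd}.

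Finally, for \ref{enu:division} the recursion \eqref{eq:defDivCoeff} is built so that $\sum_{l=0}^{n}b_{l\eps}d_{n-l,\eps}=a_{n\eps}$ for every $n$; thus $(d_n)$ is the Cauchy inverse of $(b_n)$ applied to $(a_n)$, and once $(d_n)_{\rm c}\in\rcrhoc$ is established, \eqref{eq:division} follows from \ref{enu:prod} applied to $(b_n)$ and $(d_n)$ — their Cauchy product being $(a_n)$ — on the set where $\hypersum{\rho}{\sigma}b_n(\bar x-c)^n$ is invertible. The real difficulty is therefore the weak $\rho$-moderateness of the recursively defined $d_{n\eps}$, together with the radius bound, and I plan to argue $\eps$-wise and analytically rather than by unwinding the recursion. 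Since $b_0=[b_{0\eps}]$ is invertible, $|b_{0\eps}|\ge\rho_\eps^{S}$ for some $S$, and weak moderateness of $(b_n)$ shows $\sum_{n\ge1}|b_{n\eps}|\,t^{n}<|b_{0\eps}|$ for $t\le\rho_\eps^{M}$ with $M:=Q+R+S+1$; hence $z\mapsto\sum_n b_{n\eps}z^{n}$ has no zero in $\{|z|\le\rho_\eps^{M}\}$, on which $\sum_n a_{n\eps}z^{n}$ is moderately bounded. Cauchy's estimates for the quotient $a_\eps/b_\eps$ on the circle $|z|=\rho_\eps^{M}$ then give $|d_{n\eps}|\le\rho_\eps^{-nM-R'}$ with moderate $R'$, \emph{uniformly in} $n$, which is exactly weak $\rho$-moderateness and simultaneously bounds $\radconv{d_n}$ from below; matching this lower bound against $\min(\radconv{a_n},\radconv{b_n})$ is the remaining delicate $\eps$-wise estimate. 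Independence from the chosen representatives is guaranteed throughout by Lem.~\ref{lem:indepRepr}.
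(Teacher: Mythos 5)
Your treatment of items \ref{enu:prodScalar} and \ref{enu:sum_conv} matches the paper's (linearity of hyperlimits plus $\eps$-wise classical radius estimates), and your overall architecture for \ref{enu:prod} and \ref{enu:division} (Cauchy product via absolute convergence plus Mertens, division reduced to the product through the recursion \eqref{eq:defDivCoeff}) is also the paper's. However, there is a genuine gap at the crucial step of \ref{enu:prod}: you establish absolute convergence at $\bar{x}$ by taking the bound $|a_{n}(x-c)^{n}|<K$ of Lem.~\ref{lem:kboundhypersum}, multiplying by $h^{n}$, and invoking the direct comparison test \cite[Thm.~22]{TiGi}. This is exactly the argument the paper explicitly rules out in the discussion following Lem.~\ref{lem:kboundhypersum}: the inequality $|a_{n}(\bar{x}-c)^{n}|<Kh^{n}$ obtained this way holds only as an inequality in $\rti$ for each fixed $n\in\hypNs$, which at the level of representatives gives
\[
\forall n\in\N\,\exists\eps_{0n}\,\forall\eps\le\eps_{0n}:\ \left|a_{n\eps}(\bar{x}_{\eps}-c_{\eps})^{n}\right|\le K_{\eps}h_{\eps}^{n},
\]
with $\eps_{0n}$ depending on $n$. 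Since hyperfinite partial sums are computed $\eps$-wise with upper limits $N_{\eps}\to+\infty$, this non-uniform family of inequalities cannot be summed to estimate $\sum_{n=0}^{N_{\eps}}|a_{n\eps}(\bar{x}_{\eps}-c_{\eps})^{n}|$, so the comparison test does not apply. The paper's remedy is precisely the \emph{uniform} notion of eventually $\rti$-bounded summands (Def.~\ref{def:eventInf}), upgraded by Lem.~\ref{lem:KstrongBound} to $\forall^{0}\eps\,\forall n\in\N:\ |a_{n\eps}(x_{\eps}-c_{\eps})^{n}|<K_{\eps}$, which yields the uniform domination \eqref{eq:km-bound} and hence Thm.~\ref{thm:absConvTopProp}; the paper's own proof of \ref{enu:prod} then simply invokes Thm.~\ref{thm:absConvTopProp} (thereby tacitly assuming $\sigma\le\rho^{*}$ and eventual boundedness, hypotheses absent from the statement of the theorem). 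You are right to flag the first of these hypotheses, but your mechanism for bypassing the second is the one the paper shows to be insufficient.

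Two smaller remarks. First, your coefficient bound for the Cauchy product, absorbing the factor $n+1$ via $n+1\le\rho_{\eps}^{-n}$ uniformly in $n$ for $\eps$ small, is a correct and slightly simpler alternative to the paper's geometric-sum estimate with $Q_{2}>Q_{1}$. Second, your proof of weak $\rho$-moderateness of the quotient coefficients $(d_{n\eps})$ in \ref{enu:division} by Cauchy's estimates for $a_{\eps}/b_{\eps}$ on the circle $|z|=\rho_{\eps}^{M}$ is legitimate (both series converge on that disc and $b_{\eps}$ has no zeros there, and the $d_{n\eps}$ are exactly the Taylor coefficients of $a_{\eps}/b_{\eps}$), and it is a genuinely different route from the paper's induction on \eqref{eq:defDivCoeff}; it buys uniformity in $n$ in one stroke, though, like the paper's induction, it only yields a lower bound of the form $\diff\rho^{M}$ for $\radconv{d_{n}}$ rather than the claimed $\min(\radconv{a_{n}},\radconv{b_{n}})$, a point you correctly leave flagged as unresolved.
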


\begin{proof}
Equalities \eqref{eq:prodScalar} and \eqref{eq:sum} follow directly
from analogous properties of convergent hyperlimits, i.e.~\cite[Sec.~5.2]{MTAG}.
All the inequalities concerning the radius of convergence can be proved
in the same way from analogous results of the classical theory, because
of Def.~.\ref{enu:radConv}\ref{def:radCon}. For example, from Def.~\ref{def:setOfConv}.\ref{enu:ConvConv}
we have that both the ordinary series $\sum_{n=0}^{+\infty}a_{n\eps}(x_{\eps}-c_{\eps})^{n}$
and $\sum_{n=0}^{+\infty}b_{n\eps}(x_{\eps}-c_{\eps})^{n}$ converge.
Thereby, their sum $\sum_{n=0}^{+\infty}\left(a_{n\eps}+b_{n\eps}\right)(x_{\eps}-c_{\eps})^{n}$
converges with radius $\radconveps{a_{n}+b_{n}}\ge\min\left(\radconveps{a_{n}},\radconveps{b_{n}}\right)$.
To prove \eqref{eq:CauchyProd} (assuming that $\bar{x}$ lies in
the convergence set of the product HPS, see below), from Lem.~\ref{thm:absConvTopProp}
we have that both the series converge absolutely because $\bar{x}\in B_{|x-c|}(c)$.
We can hence apply the generalization of Mertens' theorem to hyperseries
(see \cite[Thm.~37]{TiGi}). To complete the proof of \ref{enu:prod},
we start by showing that the terms $\left(\sum_{k=0}^{n}a_{k\eps}b_{n-k,\eps}\right)_{n,\eps}$
defines coefficients for an HPS. Let $(a_{n})_{\text{{\rm c}}}=\left[a_{n\eps}\right]_{\text{{\rm c}}}$,
$(b_{n})_{\text{{\rm c}}}=\left[b_{n\eps}\right]_{\text{{\rm c}}}\in\rcrhoc$,
so that:

\begin{equation}
\exists Q_{1},R_{1}\in\N\,\forall^{0}\eps\,\forall n\in\N:\ \left|a_{n\eps}\right|\le\rho_{\eps}^{-nQ_{1}-R_{1}}.\label{eq:weakMod_a}
\end{equation}

\begin{equation}
\exists Q_{2},R_{2}\in\N\,\forall^{0}\eps\,\forall n\in\N:\ \left|b_{n\eps}\right|\le\rho_{\eps}^{-nQ_{2}-R_{2}}.\label{eq:weakMod_b}
\end{equation}

\noindent Without loss of generality we can assume $Q_{2}>Q_{1}$.
We have
\begin{align}
\left|\sum_{k=0}^{n}a_{k\eps}b_{n-k,\eps}\right| & \leq\sum_{k=0}^{n}|a_{k\eps}||b_{n-k,\eps}|\nonumber \\
 & \leq\sum_{k=0}^{n}\rho_{\eps}^{-kQ_{1}-R_{1}}.\rho_{\eps}^{-(n-k)Q_{2}-R_{2}}\nonumber \\
 & \leq\sum_{k=0}^{n}\rho_{\eps}^{-kQ_{1}+kQ_{2}-nQ_{2}-R_{1}-R_{2}}.\label{eq:a_kb_n-k_bound}
\end{align}

\noindent We have $\rho_{\eps}^{Q_{2}-Q_{1}}<1$ because $Q_{2}>Q_{1}$,
and hence

\[
\left|\sum_{k=0}^{n}a_{k\eps}b_{n-k,\eps}\right|\leq\frac{\rho_{\eps}^{-nQ_{2}-R_{1}-R_{2}}}{1-\rho_{\eps}^{-Q_{1}+Q_{2}}}\leq\rho_{\eps}^{-nQ-R},
\]
where $R:=R_{1}+R_{2}$ and for a suitable $Q\in\N$ (that can be
chosen uniformly with respect to $n\in\N$). Thereby, the product
HPS has well-defined coefficients and hence a suitable set of convergence.
Now, we want to show that $\bar{x}$ lies in this set of convergence.
Since Def.~\ref{def:setOfConv}.\ref{enu:ConvRadConv} clearly holds
and Def.~\ref{def:setOfConv}.\ref{enu:ConvConv} follows from Mertens'
Theorem (both \cite[Thm.~37]{TiGi} and the classical version), it
remains to prove that we actually have a formal HPS (Def.~\ref{def:setOfConv}.\ref{enu:ConvFHPS})
and moderateness of derivatives (Def.~\ref{def:setOfConv}.\ref{enu:ConvDerMod}).
The latter follows by the general Leibniz rule for the $k$-th derivative
of a product. For the former one, without loss of generality we can
assume $c=0$; let $\left(M_{\eps}\right)$, $\left(N_{\eps}\right)\in\N_{\sigma}$,
then for suitable $\left(\bar{M}_{\eps}\right)$, $\left(\hat{M}_{\eps}\right)\in\N_{\sigma}$
and $\left(\bar{N}_{\eps}\right)$, $\left(\hat{N}_{\eps}\right)\in\N_{\sigma}$
such that $M_{\eps}=\bar{M}_{\eps}+\hat{M}_{\eps}$ and $N_{\eps}=\bar{N}_{\eps}+\hat{N}_{\eps}$,
we have
\begin{equation}
\left(\sum_{n=N_{\eps}}^{M_{\eps}}\sum_{k=0}^{n}a_{n\eps}b_{n-k,\eps}\hat{x}_{\eps}^{n}\right)=\left(\sum_{n=\bar{N}_{\eps}}^{\bar{M}_{\eps}}a_{n\eps}\hat{x}_{\eps}^{n}\right)\cdot\left(\sum_{n=\hat{N}_{\eps}}^{\hat{M}_{\eps}}b_{n,\eps}\hat{x}_{\eps}^{n}\right),\label{eq:prodMod}
\end{equation}
and thereby Def.~\ref{def:setOfConv}.\ref{enu:ConvFHPS} follows.

\noindent \ref{enu:division}: To prove that $(d_{n})_{{\rm c}}\in\rcrhoc$,
without loss of generality, we can assume in \eqref{eq:weakMod_a}
and \eqref{eq:weakMod_b} that $Q_{1}=Q_{2}=:\hat{Q}>R_{1}=R_{2}=:\hat{R}$
and $\hat{Q}>0$. By induction on $n\in\N$, we what to prove that
\begin{equation}
\forall^{0}\eps\,\forall n\in\N:\ \left|d_{n\eps}\right|\le\rho_{\eps}^{-n\hat{Q}-\hat{Q}}.\label{eq:divInd}
\end{equation}
For $n=0$, we have $\left|d_{0\eps}\right|=\left|\frac{a_{0\eps}}{b_{0\eps}}\right|\le\rho_{\eps}^{-\hat{R}+\hat{R}}\le\rho_{\eps}^{-\hat{Q}}$
for all $\eps$ because $\hat{Q}>0$. For the inductive step, we assume
\eqref{eq:divInd} and use the recursive definition \eqref{eq:defDivCoeff}:

\begin{align*}
\left|d_{n+1,\eps}\right| & \le\left|\frac{a_{n+1,\eps}}{b_{0\eps}}\right|+\left|\frac{\sum_{l=1}^{n+1}b_{l\eps}d_{n-l,\eps}}{b_{0\eps}}\right|\\
 & \le\rho_{\eps}^{-(n+1)\hat{Q}-\hat{R}}\cdot\rho_{\eps}^{\hat{R}}+\sum_{l=1}^{n+1}\rho_{\eps}^{-l\hat{Q}-\hat{R}}\cdot\rho_{\eps}^{-(n-l)\hat{Q}-\hat{Q}}\cdot\rho_{\eps}^{\hat{R}}\\
 & =\rho_{\eps}^{-n\hat{Q}-\hat{Q}}+\rho_{\eps}^{-n\hat{Q}-\hat{Q}}\le2\rho_{\eps}^{-n\hat{Q}-\hat{Q}}.
\end{align*}
We have $2\rho_{\eps}^{-n\hat{Q}-\hat{Q}}\le\rho_{\eps}^{-(n+1)\hat{Q}-\hat{Q}}$
if and only if $2\le\rho_{\eps}^{-\hat{Q}}$, which holds for $\eps$
small (independently by $n$). Finally, equality \eqref{eq:division}
can be proved as we did above for the product because $\bar{x}\in B_{|x-c|}(c)$
and

\begin{equation}
\hypersum{\rho}{\sigma}a_{n}\left(\bar{x}-c\right){}^{n}=\hypersum{\rho}{\sigma}d_{n}\left(\bar{x}-c\right){}^{n}.\hypersum{\rho}{\sigma}b_{n}\left(\bar{x}-c\right){}^{n}.\label{eq:dev_mult}
\end{equation}
From this equality, it also follows Def.~\ref{def:convHps}.\ref{enu:ConvFHPS}
because the product of a non-moderate net (on a co-final set) by a
moderate net cannot yield a moderate net. Finally, as above, moderateness
of derivatives follows from Mertens' theorem and the $k$-th derivative
of the quotient.
\end{proof}
The following theorem concerns the composition of HPS:
\begin{thm}
\label{thm:composition} Let $(a_{n})_{\text{{\rm c}}}=\left[a_{n\eps}\right]_{\text{{\rm c}}}$,
$(b_{n})_{\text{{\rm c}}}=\left[b_{n\eps}\right]_{\text{{\rm c}}}\in\rcrhoc$
be coefficients for HPS. Set $f(y):=\hypersum{\rho}{\sigma}a_{n}(y-b_{0}){}^{n}$
for all $y\in\setconv{a_{n}}{b_{0}}$ and $g(x):=\hypersum{\rho}{\sigma}b_{n}(x-c){}^{n}$
for all $x\in\setconv{b_{n}}{c}$. Set
\begin{align*}
c_{0\eps} & :=a_{0\eps}\\
c_{n\eps} & :=\sum_{k=0}^{+\infty}a_{k\eps}\sum_{m_{1}+\ldots+m_{k}=n}b_{m_{1}\eps}\cdot\ldots\cdot b_{m_{k}\eps}\qquad\forall n\in\N_{>0}.
\end{align*}
If $x\in\setconv{b_{n}}{c}$ and $g(x)\in\setconv{a_{n}}{b_{0}}$,
then $f(g(x))=\hypersum{\rho}{\sigma}c_{n}(x-c)^{n}$ is a convergent
HPS.
\end{thm}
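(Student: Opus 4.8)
The plan is to verify the two defining conditions of a convergent HPS (Def.~\ref{def:convHps}): first that the net $(c_{n\eps})_{n,\eps}$ yields coefficients for an HPS, i.e.~$(c_n)_{\text{c}}\in\rcrhoc$, and then that $x\in\setconv{c_n}{c}$. For the first I would argue exactly as in the proof of Thm.~\ref{thm:algProperties}.\ref{enu:prod}. Since $g(x)-b_0$ has no constant term, only compositions with all $m_i\ge 1$ contribute to $c_{n\eps}$, so for $n>0$ the outer sum over $k$ truncates at $k\le n$ and $c_{n\eps}$ is a genuine finite sum. Starting from \eqref{eq:weakMod_a} and \eqref{eq:weakMod_b}, I would bound each product $b_{m_1\eps}\cdots b_{m_k\eps}$ by $\rho_\eps^{-nQ_2-kR_2}$ (using $\sum_i m_i=n$), multiply by $|a_{k\eps}|\le\rho_\eps^{-kQ_1-R_1}$, and count the ${n-1 \choose k-1}$ compositions of $n$ into $k$ positive parts. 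Summing the resulting binomial series contributes a factor $(1+\rho_\eps^{-(Q_1+R_2)})^{n-1}\le 2^{n-1}\rho_\eps^{-(n-1)(Q_1+R_2)}$, and absorbing $2^{n-1}\le\rho_\eps^{-n}$ for $\eps$ small yields $|c_{n\eps}|\le\rho_\eps^{-nQ-R}$ for suitable $Q,R\in\N$ uniform in $n$. Hence $(c_n)_{\text{c}}\in\rcrhoc$ and $\radconv{c_n}$ is defined.

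For the second condition, the conceptual core is the classical composition identity, which I would first establish $\eps$-wise: for $\eps$ small,
\[
\sum_{n=0}^{+\infty}c_{n\eps}(x_\eps-c_\eps)^n=\sum_{k=0}^{+\infty}a_{k\eps}\Bigl(\sum_{n=1}^{+\infty}b_{n\eps}(x_\eps-c_\eps)^n\Bigr)^{k}=\sum_{k=0}^{+\infty}a_{k\eps}\bigl(g_\eps(x_\eps)-b_{0\eps}\bigr)^{k},
\]
where $g_\eps(x_\eps)=\sum_n b_{n\eps}(x_\eps-c_\eps)^n$ is the $\eps$-representative furnished by $x\in\setconv{b_n}{c}$ and the last sum is $f_\eps(g_\eps(x_\eps))$ furnished by $g(x)\in\setconv{a_n}{b_0}$. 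Granting this identity, Def.~\ref{def:setOfConv}.\ref{enu:ConvConv} follows by identifying the hyperseries $\hypersum{\rho}{\sigma}c_n(x-c)^n$ with the $\eps$-wise sum through the hyperlimit/$\eps$-wise-limit comparison of \cite[Thm.~12, Thm.~13]{TiGi}; the formal-HPS condition Def.~\ref{def:setOfConv}.\ref{enu:ConvFHPS} and the radius condition Def.~\ref{def:setOfConv}.\ref{enu:ConvRadConv} follow from the weak moderateness just proved together with the estimates of Thm.~\ref{thm:setConvNotTrivial}; and the derivative-moderateness Def.~\ref{def:setOfConv}.\ref{enu:ConvDerMod} follows from the higher chain rule for the GSF $f\circ g$, or term by term from the bounds obtained as in the product case of Thm.~\ref{thm:algProperties}.

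The step that requires genuine care — and which I expect to be the main obstacle — is the justification of the $\eps$-wise rearrangement above. Interchanging the sums over $k$ and $n$ is licit only when the double series converges absolutely, i.e.~when $\sum_{n\ge 1}|b_{n\eps}||x_\eps-c_\eps|^n<r_{f,\eps}$, where $r_{f,\eps}=\radconveps{a_n}$; this is strictly stronger than the hypothesis $|g(x)-b_0|<\radconv{a_n}$ coming from $g(x)\in\setconv{a_n}{b_0}$, since in general $|g_\eps(x_\eps)-b_{0\eps}|\le\sum_{n\ge 1}|b_{n\eps}||x_\eps-c_\eps|^n$. To close this gap I would exploit that $x$ lies \emph{strictly} inside $\radconv{b_n}$: by Thm.~\ref{thm:radConvTool}.\ref{enu:radConvModLe} I can insert $s\in\rti$ with $|x-c|<s\le\radconv{b_n}$, which by Thm.~\ref{thm:absConvTopProp}.\ref{enu:absConvUnif} secures $\eps$-wise absolute convergence of $g$ at $x$ with a moderate dominating net $\tilde u_\eps:=\sum_{n\ge 1}|b_{n\eps}||x_\eps-c_\eps|^n\in\R_\rho$, and I would then compare $\tilde u_\eps$ with $r_{f,\eps}$ on subpoints via the dichotomy of Thm.~\ref{thm:radConvTool}.\ref{enu:radConvDichotomy}. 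The delicate point is that the strict inequality $\tilde u_\eps<r_{f,\eps}$ need not follow from $|g(x)-b_0|<\radconv{a_n}$ alone, so the argument must either read the hypothesis through its absolute-value version $[\tilde u_\eps]\in\setconv{a_n}{b_0}$ or restrict to points $x$ for which $(b_n(x-c)^n)_n$ is eventually $\rti$-bounded in $\rcrhoc$, so that $\tilde u_\eps$ is genuinely controlled; once this absolute convergence is in place, the rearrangement and the passage back to hyperseries are routine.
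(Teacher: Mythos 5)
Your treatment of the coefficients is essentially the paper's own argument. The paper also starts from \eqref{eq:weakMod_a}--\eqref{eq:weakMod_b} (normalized to $Q_{1}=Q_{2}=\hat{Q}$, $R_{1}=R_{2}=\hat{R}$), bounds each product $b_{m_{1}\eps}\cdots b_{m_{k}\eps}$ by $\rho_{\eps}^{-n\hat{Q}-k\hat{R}}$, counts the compositions of $n$ --- using the cruder bound ${n+k-1 \choose k-1}\le2^{2n}$ where you use the exact count ${n-1 \choose k-1}$ of compositions into positive parts --- and absorbs the powers of $2$ into $\rho_{\eps}^{-n}$ for $\eps$ small, uniformly in $n$, arriving at $|c_{n\eps}|\le\rho_{\eps}^{-n(2\hat{Q}+\hat{R}+1)-2\hat{R}-1}$. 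On this half the two proofs coincide.

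On the second half the comparison is reversed: the paper's proof that $x\in\setconv{c_{n}}{c}$ is a two-line sketch (Def.~\ref{def:setOfConv}.\ref{enu:ConvRadConv} ``can be proved like in the classical case'', the remaining conditions ``like for GSF''), and the obstruction you isolate is exactly what this sketch hides. The classical composition theorem does \emph{not} follow from ``$x$ in the disc of $g$ and $g(x)$ in the disc of $f$''; it needs the modulus condition $\sum_{n\ge1}|b_{n\eps}|\,|x_{\eps}-c_{\eps}|^{n}<\radconveps{a_{n}}$, which is strictly stronger --- precisely your point. Indeed the gap lies in the statement itself, not in your argument: take constant nets $a_{k}=1$ for all $k$ (so $f(y)=\hypersum{\rho}{\sigma}y^{k}$ and $\radconv{a_{n}}=1$), let $b_{n}$ be the Taylor coefficients of $\sin$ at $0$, $c=0$, $x=\pi$, and any gauges with $\sigma\le\rho^{*}$. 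Then $\pi\in\setconv{b_{n}}{0}$ (the sine hyperseries converges $\eps$-wise and as a hyperseries, with all derivatives moderate), and $g(\pi)=\sin\pi=0\in\setconv{a_{n}}{0}$ since $|0|<1$; but the $c_{n}$ are the Taylor coefficients at $0$ of $1/(1-\sin z)$, whose nearest complex singularity is $z=\pi/2$, so $\radconv{c_{n}}=\pi/2<\pi=|x-c|$. Hence Def.~\ref{def:setOfConv}.\ref{enu:ConvRadConv} fails and $\hypersum{\rho}{\sigma}c_{n}(x-c)^{n}$ is not a convergent HPS, even though $f(g(\pi))=1$ exists. So your refusal to deduce the $\eps$-wise rearrangement from the stated hypotheses is correct, and one of your proposed repairs (reading the hypothesis in its absolute-value form, or requiring $\left(b_{n}(x-c)^{n}\right)_{n\in\N}$ to be eventually $\rti$-bounded in $\rcrhoc$ so that Thm.~\ref{thm:absConvTopProp} applies) is genuinely necessary, not merely convenient. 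One small caveat on your outline: the formal-HPS condition Def.~\ref{def:setOfConv}.\ref{enu:ConvFHPS} at the given $x$ does not follow from weak moderateness plus Thm.~\ref{thm:setConvNotTrivial} (which only covers $|x-c|\le\diff\rho^{q}$); it, too, must be extracted from the dominating absolute-convergence estimate, exactly like Def.~\ref{def:setOfConv}.\ref{enu:ConvRadConv}, which the example above shows can simply fail otherwise.
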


\begin{proof}
Since $\left[a_{n\eps}\right]_{c}$, $\left[b_{n\eps}\right]_{\text{{\rm c}}}\in\rcrhoc$
, we can assume that both \eqref{eq:weakMod} and \eqref{eq:strongEq}
hold with $\hat{Q}=Q_{1}=Q_{2}>0$ and $\hat{R}=R_{1}=R_{2}>0$. We
have
\begin{align*}
\left|\sum_{k=0}^{n}a_{k\eps}\sum_{m_{1}+\ldots+m_{k}=n}b_{m_{1}\eps}\cdot\right. & \left.\ldots\cdot b_{m_{k}\eps}\right|\leq\sum_{k=0}^{n}|a_{k\eps}|\sum_{m_{1}+\ldots+m_{k}=n}|b_{m_{1}\eps}|\cdot\ldots\cdot|b_{m_{k}\eps}|\\
 & \leq\sum_{k=0}^{n}\rho_{\eps}^{-k\hat{Q}-\hat{R}}\sum_{m_{1}+\ldots+m_{k}=n}\rho_{\eps}^{-m_{1}\hat{Q}-\hat{R}}\cdot\ldots\cdot\rho_{\eps}^{-m_{k}\hat{Q}-\hat{R}}\\
 & =\sum_{k=0}^{n}\rho_{\eps}^{-k\hat{Q}-\hat{R}}\sum_{m_{1}+\ldots+m_{k}=n}\rho_{\eps}^{-n\hat{Q}-k\hat{R}}\\
 & =\rho_{\eps}^{-\hat{R}}+\sum_{k=1}^{n}\rho_{\eps}^{-k\hat{Q}-\hat{R}}\sum_{m_{1}+\ldots+m_{k}=n}\rho_{\eps}^{-n\hat{Q}-k\hat{R}}\\
 & =\rho_{\eps}^{-\hat{R}}+\sum_{k=1}^{n}\rho_{\eps}^{-k\hat{Q}-\hat{R}-n\hat{Q}-k\hat{R}}{n+k-1 \choose k-1}\\
 & \le\rho_{\eps}^{-\hat{R}}+2^{2n}\rho_{\eps}^{-\hat{R}-n\hat{Q}}\cdot\frac{1-\rho_{\eps}^{-(n+1)(\hat{Q}+\hat{R})}}{1-\rho_{\eps}^{-\hat{Q}-\hat{R}}}=:[*].
\end{align*}
For $\eps$ small, we have $\frac{4}{\rho_{\eps^{-1}}}\le1$, hence
$\frac{2^{2n}}{\rho_{\eps}^{-n}}\le1$ for the same $\eps$ and for
all $n\in\N$. Now, take $\eps$ small so that also $\frac{1}{1-\rho_{\eps}^{-\hat{Q}-\hat{R}}}\le1$,
and $\frac{1}{\rho_{\eps}^{-1}}\le\frac{1}{3}$. We hence have
\[
[*]\le\rho_{\eps}^{-\hat{R}}+\rho_{\eps}^{-n\hat{Q}-\hat{R}-n}+\rho_{\eps}^{-2n\hat{Q}-n\hat{R}-2\hat{R}-n}.
\]
Since
\begin{align*}
\frac{\rho_{\eps}^{-\hat{R}}}{\rho_{\eps}^{-n(2\hat{Q}+\hat{R}+1)-2\hat{R}-1}} & \le\frac{1}{\rho_{\eps}^{-1}}\le\frac{1}{3}\\
\frac{\rho_{\eps}^{-n\hat{Q}-\hat{R}-n}}{\rho_{\eps}^{-n(2\hat{Q}+\hat{R}+1)-2\hat{R}-1}} & \le\frac{1}{\rho_{\eps}^{-1}}\le\frac{1}{3}\\
\frac{\rho_{\eps}^{-2n\hat{Q}-n\hat{R}-2\hat{R}-n}}{\rho_{\eps}^{-n(2\hat{Q}+\hat{R}+1)-2\hat{R}-1}} & \le\frac{1}{\rho_{\eps}^{-1}}\le\frac{1}{3},
\end{align*}
we finally get
\[
\forall^{0}\eps\,\forall n\in\N:\ \left|\sum_{k=0}^{n}a_{k\eps}\sum_{m_{1}+\ldots+m_{k}=n}b_{m_{1}\eps}\cdot\ldots\cdot b_{m_{k}\eps}\right|\le\rho_{\eps}^{-n(2\hat{Q}+\hat{R}+1)-2\hat{R}-1},
\]
which proves that $(c_{n\eps})_{n,\eps}$ defines coefficients for
an HPS. To prove that $x\in\setconv{c_{n}}{c}$, we can proceed as
follows: Def.~\ref{def:setOfConv}.\ref{enu:ConvRadConv} can be
proved like in the classical case; Def.~\ref{def:setOfConv}.\ref{enu:ConvFHPS}
is a consequence of composition of polynomials if $M_{\eps}<+\infty$
or it can be proved proceeding like in the case of composition of
GSF if $M_{\eps}=+\infty$: Def.~\ref{def:setOfConv}.\ref{enu:ConvConv}
and Def.~\ref{def:setOfConv}.\ref{enu:ConvDerMod} can be proved
like for GSF (see \cite{TI} and Thm.~\ref{thm:GRAFareGSF} below).
\end{proof}

\section{Generalized real analytic functions and their calculus}

A direct consequence of Def.~\ref{def:setOfConv} of set of convergence
is the following
\begin{thm}
\label{thm:GRAFareGSF} Let $\left[a_{n\eps}\right]_{\text{{\rm c}}}=\left(a_{n}\right)_{\text{{\rm c}}}\in\rcrhoc$
and $c=[c_{\eps}]\in\rcrho$. Set $f(x):=\hypersum{\rho}{\sigma}a_{n}(x-c)^{n}=\left[\sum_{n=0}^{\infty}a_{n\eps}(x_{\eps}-c_{\eps})^{n}\right]=:\left[v_{\eps}(x_{\eps})\right]$
for all $x=[x_{\eps}]\in\setconv{a_{n}}{c}$. Then $f\in\gsf\left(\setconv{a_{n}}{c},\rcrho\right)$
is a GSF defined by $\left(v_{\eps}\right)$.
\end{thm}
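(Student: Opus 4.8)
The plan is to verify that the net $(v_\eps)$ of real-analytic functions $v_\eps(y):=\sum_{n=0}^{\infty}a_{n\eps}(y-c_\eps)^{n}$ satisfies the defining conditions of a GSF on $X:=\setconv{a_n}{c}$, and then to invoke the pointwise characterization of GSF of \cite[Thm.~16]{TI}. First I would observe that each $v_\eps$ is real-analytic, hence smooth, on its open interval of convergence $(c_\eps-r_\eps,c_\eps+r_\eps)$, where $[r_\eps]=\radconv{a_n}$ by Def.~\ref{def:radCon}, with derivatives obtained by term-wise differentiation exactly as in \eqref{eq:der_k}. For $x=[x_\eps]\in\setconv{a_n}{c}$, condition Def.~\ref{def:setOfConv}.\ref{enu:ConvRadConv}, i.e.~$|x-c|<\radconv{a_n}$, guarantees $|x_\eps-c_\eps|<r_\eps$ for $\eps$ small, so that $x_\eps$ lies in the domain of smoothness of $v_\eps$ and the evaluation $v_\eps(x_\eps)$ is meaningful.

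The substantive ingredients are already available. Well-definedness of $f(x)=[v_\eps(x_\eps)]$, i.e.~independence from the chosen representatives of $x$, $c$ and of $(a_n)_{{\rm c}}$, is precisely Lem.~\ref{lem:indepRepr}; in particular the final identity in the proof of that lemma yields $\left[\sum_n a_{n\eps}(x_\eps-c_\eps)^n\right]=\left[\sum_n\bar a_{n\eps}(\bar x_\eps-\bar c_\eps)^n\right]$, so the value does not depend on the representatives. Moderateness of the value, $f(x)\in\rcrho$, is exactly Def.~\ref{def:setOfConv}.\ref{enu:ConvConv}, and for every $k\in\N_{>0}$ and every representative $[\bar x_\eps]=x$ the derivative net $\bigl(v_\eps^{(k)}(\bar x_\eps)\bigr)$ is $\rho$-moderate by Def.~\ref{def:setOfConv}.\ref{enu:ConvDerMod} (the case $k=0$ being again \ref{enu:ConvConv}). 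Having thus established, at every point of $X$ and for every representative, the $\rho$-moderateness of $v_\eps(x_\eps)$ together with all its derivatives, I would close the argument by applying \cite[Thm.~16]{TI} to conclude that $(v_\eps)$ defines a GSF $f\in\gsf(\setconv{a_n}{c},\rcrho)$.

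The argument carries no deep new computation, because the three conditions of Def.~\ref{def:setOfConv} were tailored precisely to make the evaluation net $(v_\eps)$ meet the GSF criteria; this is exactly why the statement is advertised as ``a direct consequence'' of that definition. The point requiring the most care is matching quantifiers: the characterization \cite[Thm.~16]{TI} demands moderateness of \emph{every} derivative, at \emph{every} point, for \emph{every} representative of that point, and one must check that Def.~\ref{def:setOfConv}.\ref{enu:ConvDerMod} supplies exactly this ``for all representatives'' strength --- which indeed it does --- rather than moderateness for a single representative only. This is also the conceptual reason why condition \ref{enu:ConvDerMod} was built into the definition of the set of convergence in the first place.
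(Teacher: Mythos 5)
Your proposal is correct and follows essentially the same route as the paper, which states this theorem without a separate proof precisely because the conditions of Def.~\ref{def:setOfConv} were designed to coincide with the GSF requirements for the net $\left(v_{\eps}\right)$. Your verification --- smoothness of each $v_{\eps}$ on its interval of convergence, moderateness of the value via condition \ref{enu:ConvConv}, moderateness of all derivatives at all representatives via condition \ref{enu:ConvDerMod}, representative-independence via Lem.~\ref{lem:indepRepr}, and the closing appeal to \cite[Thm.~16]{TI} --- is exactly the argument the paper leaves implicit.
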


Before defining the notion of GRAF, we need to prove that the derived
HPS has the same set of convergence of the original HPS:
\begin{thm}
\label{thm:dertoconv}Assume $\sigma\leq\rho^{*}$, $\left(a_{n}\right)_{\text{{\rm c}}}\in\rcrhoc$
and $c\in\rcrho$. Then the set of convergence of the derived series
$\parthypersumarg{\rho}{\sigma}{n}{>0}na_{n}(x-c)^{n-1}=\hypersum{\rho}{\sigma}(n+1)a_{n+1}(x-c)^{n}$
is the same as the set of convergence of the original HPS $\hypersum{\rho}{\rho}a_{n}(x-c)^{n}$
. Thereby, recursively, all the derivatives has the same set of convergence
of the original HPS and define a GSF.
\end{thm}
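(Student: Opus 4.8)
The plan is to verify, for the coefficients $\left((n+1)a_{n+1}\right)_{\text{c}}$ of the derived series, each of the four conditions of Def.~\ref{def:setOfConv} on exactly the set $\setconv{a_n}{c}$. The conceptual backbone is that, at the level of representatives, the derived HPS is nothing but the derivative of the function defining the original one: writing $v_\eps(y):=\sum_{n=0}^{+\infty}a_{n\eps}y^n$, we have $\sum_{n=0}^{+\infty}(n+1)a_{n+1,\eps}y^n=v_\eps'(y)$, so by Thm.~\ref{thm:GRAFareGSF} the original HPS is the GSF $[v_\eps(\cdot)]$ and the derived one ought to be its derivative $[v_\eps'(\cdot)]$. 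Without loss of generality I take $c=0$ and set $y_\eps:=x_\eps-c_\eps$, and I prove the two inclusions $\setconv{a_n}{0}\subseteq\setconv{(n+1)a_{n+1}}{0}$ and its converse.

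First I would check that $\left((n+1)a_{n+1}\right)_{\text{c}}\in\rcrhoc$ and that the radius is unchanged. From $|a_{n\eps}|\le\rho_\eps^{-nQ-R}$ and $(n+1)\le\rho_\eps^{-n}$ for $\eps$ small (since $\rho_\eps\to0$), one gets $|(n+1)a_{n+1,\eps}|\le\rho_\eps^{-n(Q+1)-Q-R}$, i.e.~weak $\rho$-moderateness with a larger exponent. For the radius, note that for each fixed $\eps$ one has $\limsup_{n}|(n+1)a_{n+1,\eps}|^{1/n}=\limsup_{n}|a_{n\eps}|^{1/n}$, because $(n+1)^{1/n}\to1$ and the $\limsup$ is invariant under the index shift $n\mapsto n+1$; hence $\radconv{(n+1)a_{n+1}}=\radconv{a_n}$ in $\rhoext$ by Def.~\ref{def:radCon}.\ref{enu:radConv}. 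This simultaneously secures Def.~\ref{def:setOfConv}.\ref{enu:ConvRadConv} for both series.

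Next, the derivative conditions transfer for free. Since the $\eps$-wise $k$-th derivative of the derived series is $v_\eps^{(k+1)}(y_\eps)$, condition Def.~\ref{def:setOfConv}.\ref{enu:ConvDerMod} for the derived series coincides with the instances $j\ge2$ of the moderateness $v_\eps^{(j)}(\bar x_\eps)\in\R_\rho$ from the same condition for the original HPS; and the instance $j=1$ of Def.~\ref{def:setOfConv}.\ref{enu:ConvDerMod} for the original series is precisely $[v_\eps'(y_\eps)]\in\rcrho$, which is the moderateness half of Def.~\ref{def:setOfConv}.\ref{enu:ConvConv} for the derived one. Thus, for $x\in\setconv{a_n}{0}$, both Def.~\ref{def:setOfConv}.\ref{enu:ConvDerMod} and the moderateness part of \ref{enu:ConvConv} for the derived series come directly from the hypothesis $x\in\setconv{a_n}{0}$.

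The crux is then Def.~\ref{def:setOfConv}.\ref{enu:ConvFHPS} for the derived coefficients together with the genuine hyper-convergence $\hypersum{\rho}{\sigma}(n+1)a_{n+1}x^n=[v_\eps'(y_\eps)]$ (the remaining half of \ref{enu:ConvConv}). Here the factor $(n+1)$ is the real obstacle: it destroys eventual $\rti$-boundedness of the derived summands, so Thm.~\ref{thm:absConvTopProp} cannot be applied verbatim, and this is already visible at the far point $x=1$ of the Dirac delta, where the original converges but no eventually $\rti$-bounded dominating point exists. The plan is therefore to re-run the convergence estimate of the original series directly for the smooth representative $v_\eps'$: exploiting $\sigma\le\rho^*$, I would dominate $\bigl|\sum_{n=N_\eps}^{M_\eps}(n+1)a_{n+1,\eps}y_\eps^n\bigr|$ by absorbing $(n+1)$ into a power of $\rho_\eps^{-1}$ exactly as in the binomial-series bound of Thm.~\ref{thm:setConvNotTrivial} and Example~\ref{exa:Exponential}, obtaining $\rho$-moderateness of the hyperfinite sums and, via the direct comparison test \cite[Thm.~22]{TiGi} together with $\hyperlimarg{\rho}{\sigma}{M}\diff\rho^{M}=0$, the required equality of the hyperlimit with the $\eps$-wise sum. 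The converse inclusion is the easier mirror image: the original coefficients are recovered from the derived ones by division by $n$ and an index shift (term-by-term antidifferentiation), and since the factors $\tfrac1n\le1$ only shrink the terms, the same comparison transfers Def.~\ref{def:setOfConv}.\ref{enu:ConvFHPS} and \ref{enu:ConvConv} back to $\sum a_n x^n$. Finally, the recursive statement follows by induction applying the first part to the derived coefficients, and the assertion that each derived series defines a GSF is then immediate from Thm.~\ref{thm:GRAFareGSF}, consistently with closure of GSF under differentiation, the derived HPS representing exactly $f'$ for $f$ the GSF of the original one.
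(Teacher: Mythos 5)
Your decomposition agrees with the paper's proof on the easy parts, and is in one respect tidier: the radius equality via shift--invariance of the $\limsup$ is exactly the paper's first step, and your bookkeeping observation that the $k$-th derivative of the derived series is $v_\eps^{(k+1)}$ --- so that Def.~\ref{def:setOfConv}.\ref{enu:ConvDerMod} for the derived series is contained in the instances $j\ge2$ of the same condition for the original HPS, while the instance $j=1$ is precisely the moderateness half of Def.~\ref{def:setOfConv}.\ref{enu:ConvConv} for the derived one --- is cleaner than what the paper writes. The converse inclusion, the recursion, and the GSF conclusion via Thm.~\ref{thm:GRAFareGSF} also match the paper.

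The genuine gap is at the step you yourself call the crux: proving Def.~\ref{def:setOfConv}.\ref{enu:ConvFHPS} and the convergence half of Def.~\ref{def:setOfConv}.\ref{enu:ConvConv} for the derived series at an \emph{arbitrary} point of $\setconv{a_{n}}{c}$. The estimate you propose --- absorbing $(n+1)$ into a power of $\rho_{\eps}^{-1}$ as in Thm.~\ref{thm:setConvNotTrivial} and then invoking the comparison test \cite[Thm.~22]{TiGi} --- requires term-wise domination $\left|a_{n+1,\eps}\right|\left|x_{\eps}-c_{\eps}\right|^{n}\le C_{\eps}t_{\eps}^{n}$ with $\sum_{n}(n+1)C_{\eps}t_{\eps}^{n}$ $\rho$-moderate. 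From weak moderateness \eqref{eq:weakMod} such bounds exist only when $|x-c|\le\diff\rho^{q}$ with $q>Q$, i.e.\ on an infinitesimal ball around $c$, or more generally when the summands are eventually $\rti$-bounded in the sense of Def.~\ref{def:eventInf}. But membership in the set of convergence requires neither: it only demands $\rho$-moderate hyperfinite sums and a convergent hyperseries, which may hold purely through cancellation --- exactly the obstruction you flag two sentences earlier via Rem.~\ref{rem:eventBound}.\ref{enu:eventBoundDelta}, and your estimate is subject to it just as much as Thm.~\ref{thm:absConvTopProp} is. So, as written, your plan establishes the inclusion $\setconv{a_{n}}{c}\subseteq\setconv{(n+1)a_{n+1}}{c}$ only on $B_{\diff\rho^{q}}(c)$, whereas the theorem asserts it on the whole set of convergence, whose interesting points are precisely the non-infinitesimal ones. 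The paper avoids this trap by never passing through absolute convergence: it transfers property \ref{enu:ConvFHPS} for the derived series directly from property \ref{enu:ConvFHPS} of the original HPS (using $\sigma\le\rho^{*}$), and obtains the converse directions by comparing term sizes and by the displayed inequality relating $\sum_{n\ge1}na_{n\eps}(x_{\eps}-c_{\eps})^{n-1}$ to the original sum. A milder version of the same objection applies to your converse direction: the factors $\tfrac1n\le1$ shrink \emph{terms}, but term-wise shrinking does not by itself transfer moderateness of \emph{sums} when convergence is only conditional.
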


\begin{proof}
By Def.~\ref{def:radCon}.\ref{enu:radConv} of radius of convergence
and the classical theory, we have
\begin{align*}
\radconveps{a_{n}} & =\left(\limsup_{n\to+\infty}\left|a_{n\eps}\right|^{1/n}\right)^{-1}=\left(\limsup_{n\to+\infty}\left|(n+1)a_{n+1,\eps}\right|^{1/n+1}\right)^{-1}\\
 & =\radconveps{(n+1)a_{n+1}},
\end{align*}
so Def.~\ref{def:setOfConv}.\ref{enu:ConvRadConv} for the original
HPS and the derived one are equivalent. From the condition $\left[a_{n\eps}\cdot\left(x_{\eps}-c_{\eps}\right)^{n}\right]_{\text{{\rm s}}}\in\hps{x-c}$
and $\sigma\le\rho^{*}$, in the usual way it follows that $\left[(n+1)a_{n+1,\eps}\cdot\left(x_{\eps}-c_{\eps}\right)^{n}\right]_{\text{{\rm s}}}\in\hps{x-c}$.
Vice versa, from $(n+1)\left|a_{n+1,\eps}\right|\ge\left|a_{n+1,\eps}\right|$
the opposite implication follows. The condition Def.~\ref{def:setOfConv}.\ref{enu:ConvDerMod}
about moderateness of derivatives for the original HPS clearly implies
the analogue condition for the derived one. For the opposite inclusion,
we can distinguish the case $x=_{\text{s}}c$ or $|x-c|>0$, the former
one being trivial. We have
\begin{align*}
\left|\sum_{n=1}^{+\infty}a_{n\eps}n(x_{\eps}-c_{\eps})^{n-1}\right| & =|x_{\eps}-c_{\eps}|^{-1}\left|\sum_{n=1}^{+\infty}a_{n\eps}n(x_{\eps}-c_{\eps})^{n}\right|\\
 & \ge|x_{\eps}-c_{\eps}|^{-1}\left|\sum_{n=0}^{+\infty}a_{n\eps}(x_{\eps}-c_{\eps})^{n}\right|,
\end{align*}
so that also the net $\left(\sum_{n=0}^{+\infty}a_{n\eps}(x_{\eps}-c_{\eps})^{n}\right)\in\R_{\rho}$
if the derivative is moderate.
\end{proof}
Thm.~\ref{thm:GRAFareGSF} motivates the following definition:
\begin{defn}
\label{def:GRAF}Let $\sigma\le\rho^{*}$ and $U$ be a sharply open
set of $\rcrho$, then we say that $f$ \emph{is a GRAF on} $U$ (with
respect to $\rho$, $\sigma$), and we write $f\in\gsft(U,\rcrho)$
if $f:U\ra\rti$ and for all $c\in U$ we can find $s\in\rcrho_{>0}$,
$(a_{n})_{c}\in\rcrhoc$ such that
\begin{enumerate}
\item \label{enu:intGRAF}$(c-s,c+s)\subseteq U\cap\setconv{a_{n}}{c}$,
\item \label{enu:GRAF=00003Df}$f(x)=\hypersum{\rho}{\sigma}a_{n}\left(x-c\right){}^{n}$
for all $x\in(c-s,c+s)$.
\end{enumerate}
\noindent Moreover, we say that $f:\rcrho\rightarrow\rcrho$ \emph{is
an entire function} (with respect to $\rho$, $\sigma$) if we can
find $c\in\rcrho$ and $(a_{n})_{c}\in\rcrhoc$ such that
\begin{enumerate}[resume]
\item \label{enu:entireR}$\rcrho=\setconv{a_{n}}{c}$,
\item \label{enu:entiref=00003D}$f(x)=\hypersum{\rho}{\sigma}a_{n}(x-c)^{n}$
for all $x\in\rcrho$.
\end{enumerate}
\noindent We also say that $f$ \emph{is entire at} $c$ if \ref{enu:entireR}
and \ref{enu:entiref=00003D} hold.

\end{defn}

\begin{example}
\noindent \ 
\begin{enumerate}[label=(\alph*)]
\item \label{enu:HPSdefinesGRAF}Clearly, if $(a_{n})_{c}\in\rcrhoc$,
$c\in\rti$, and we set $f(x)=\hypersum{\rho}{\sigma}a_{n}\left(x-c\right){}^{n}$,
then $f$ is a GRAF on the interior points of the set of convergence
$\setconv{a_{n}}{c}$. Vice versa, if $f\in\gsft(U,\rti)$, then $U$
is contained in the union of all the sharp interior sets $\text{int}\left(\setconv{a_{n}}{c}\right)$,
because of condition \ref{enu:intGRAF}.
\item \label{enu:deltaGRAF}Example \ref{exa:Dirac-delta} shows that Dirac
$\delta$ is entire at $0$ but it is not at any $c\in\rcrho$ such
that $\left|c\right|\geq s\in\rcrho$ for some $s$.
\item \label{enu:flatGRAF}Example \ref{exa:flatPoint} of a function $f$
with a flat point shows that $f$ is a GRAF, but if $c=0$, then $s\in\rti_{>0}$
satisfying condition \ref{enu:intGRAF} is infinitesimal, whereas
if $c\gg0$, then $s\gg0$ is finite, and these two types of set of
convergence are always disjoint.
\end{enumerate}
\end{example}

\begin{cor}
\noindent \label{cor:derGRAF}Let $\sigma\le\rho^{*}$, $U\subseteq\rti$
be a sharply open set and $f\in\gsft(U,\rcrho)$, then also $f'\in\gsft(U,\rcrho)$
and it can be computed with the derived HPS.
\end{cor}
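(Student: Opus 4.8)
The plan is to verify Def.~\ref{def:GRAF} directly for $f'$ at each center, reusing the GRAF data already available for $f$ together with the term-by-term differentiated coefficients, and then invoking Thm.~\ref{thm:dertoconv} to transport the neighborhood of convergence. Fix $c\in U$. Since $f\in\gsft(U,\rcrho)$, Def.~\ref{def:GRAF} provides $s\in\rcrho_{>0}$ and coefficients $(a_n)_{\text{c}}=[a_{n\eps}]_{\text{c}}\in\rcrhoc$ with $(c-s,c+s)\subseteq U\cap\setconv{a_n}{c}$ and $f(x)=\hypersum{\rho}{\sigma}a_n(x-c)^n$ for all $x\in(c-s,c+s)$. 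I would take as candidate GRAF data for $f'$ at $c$ the derived coefficients $(b_n)_{\text{c}}:=\bigl((n+1)a_{n+1}\bigr)_{\text{c}}$, keeping the \emph{same} radius $s$.

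First I would check $(b_n)_{\text{c}}\in\rcrhoc$: from the weak moderateness $|a_{n\eps}|\le\rho_\eps^{-nQ-R}$ in \eqref{eq:weakMod} one gets $|(n+1)a_{n+1,\eps}|\le(n+1)\rho_\eps^{-(n+1)Q-R}$, and the polynomial factor $(n+1)$ is absorbed into a slightly larger exponent exactly as in the discussion of polynomial bounds in Rem.~\ref{rem:radConv}, so $(b_n)_{\text{c}}$ is again weakly $\rho$-moderate. Since $\sigma\le\rho^*$, Thm.~\ref{thm:dertoconv} then gives $\setconv{b_n}{c}=\setconv{a_n}{c}$, whence $(c-s,c+s)\subseteq U\cap\setconv{b_n}{c}$. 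This is precisely condition Def.~\ref{def:GRAF}.\ref{enu:intGRAF} for $f'$ at $c$.

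The core is the identity $f'(x)=\hypersum{\rho}{\sigma}(n+1)a_{n+1}(x-c)^n$ on $(c-s,c+s)$, i.e.~Def.~\ref{def:GRAF}.\ref{enu:GRAF=00003Df}. By Thm.~\ref{thm:GRAFareGSF}, $f$ is the GSF defined by the net $v_\eps(x_\eps):=\sum_{n=0}^{\infty}a_{n\eps}(x_\eps-c_\eps)^n$, and by GSF calculus (see \cite{TI}) its derivative $f'$ is the GSF defined by the $\eps$-wise differentiated net $v_\eps'(x_\eps)=\sum_{n=0}^{\infty}(n+1)a_{n+1,\eps}(x_\eps-c_\eps)^n$. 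On the other hand, since $x\in(c-s,c+s)\subseteq\setconv{b_n}{c}$, condition Def.~\ref{def:setOfConv}.\ref{enu:ConvConv} for the derived HPS reads $\hypersum{\rho}{\sigma}(n+1)a_{n+1}(x-c)^n=[\sum_{n=0}^{\infty}(n+1)a_{n+1,\eps}(x_\eps-c_\eps)^n]=[v_\eps'(x_\eps)]$. Comparing representatives yields the desired equality, and since $c\in U$ was arbitrary this shows $f'\in\gsft(U,\rcrho)$, computed by the derived HPS.

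The delicate point, and hence the main obstacle, is the matching in the previous paragraph: one must know that the GSF-derivative of $f$ is genuinely represented by the term-wise differentiated net $v_\eps'$ (a representative-independent statement resting on the moderateness of \emph{all} derivatives, i.e.~Def.~\ref{def:setOfConv}.\ref{enu:ConvDerMod}, which is exactly what guarantees $f'$ is a well-defined GSF), and that this same net is the $\eps$-wise sum assigned to the derived HPS by its convergence. Both facts are already packaged in Thm.~\ref{thm:dertoconv} together with Def.~\ref{def:setOfConv}, so once these identifications are in place no new estimates are required; the entire content of the corollary is the bookkeeping that the derived coefficients inherit the same neighborhood of convergence.
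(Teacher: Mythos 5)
Your proof is correct and follows exactly the route the paper intends: the corollary is stated without proof precisely because it is the combination of Thm.~\ref{thm:dertoconv} (the derived HPS has the same set of convergence, giving Def.~\ref{def:GRAF}.\ref{enu:intGRAF}) with Thm.~\ref{thm:GRAFareGSF} and $\eps$-wise GSF differentiation (giving Def.~\ref{def:GRAF}.\ref{enu:GRAF=00003Df}). Your write-up simply makes explicit the bookkeeping --- weak moderateness of $\bigl((n+1)a_{n+1}\bigr)_{\text{c}}$ and the identification of $f'$ with the term-wise differentiated net --- that the paper leaves implicit.
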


\noindent Because of our definition Def.~\ref{def:setOfConv} of
set of convergence, several classical results can be simply translated
in our setting considering the real analytic function that defines
a given GRAF.
\begin{thm}
\label{thm:uniqueness}Let $\sigma\le\rho^{*}$, $\left(a_{n}\right)_{\text{\emph{c}}}\in\rcrhoc$,
$c\in\rti$, and set $f(x)=\hypersum{\rho}{\sigma}a_{n}(x-c)^{n}$
for all interior points $x\in\setconv{a_{n}}{c}$, then $a_{k}=\frac{f^{(k)}(c)}{k!}$
for all $k\in\N$.
\end{thm}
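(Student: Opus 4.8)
The plan is to reduce the statement to an $\eps$-wise computation, exploiting that $f$ is a generalized smooth function whose iterated derivatives are computed term by term through the derived HPS. All the genuinely analytic content has already been isolated in the preceding results, so what remains is essentially a bookkeeping argument that culminates in the observation that evaluation at the centre $c$ kills every monomial but one.

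First I would note that $c$ is a sharply interior point of $\setconv{a_n}{c}$: by Thm.~\ref{thm:setConvNotTrivial} there is $q\in\N$ with $(c-\diff\rho^{q},c+\diff\rho^{q})\subseteq\setconv{a_n}{c}$, so $f$ is defined on a sharp neighbourhood of $c$ and each iterated derivative $f^{(k)}(c)$ makes sense. By Thm.~\ref{thm:GRAFareGSF}, $f\in\gsf(\setconv{a_n}{c},\rcrho)$ is the GSF defined by the net $v_\eps(y):=\sum_{n=0}^{+\infty}a_{n\eps}(y-c_\eps)^{n}$. Setting $U:=\text{int}\left(\setconv{a_n}{c}\right)$, which is sharply open and contains $c$, the function $f$ is a GRAF on $U$, so applying Cor.~\ref{cor:derGRAF} recursively $k$ times shows that $f^{(k)}$ is again a GRAF on $U$, computed by the $k$-th derived HPS. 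Since the $\eps$-representative of the once-derived HPS is exactly $v_\eps'$, iterating this and invoking Thm.~\ref{thm:GRAFareGSF} once more identifies $f^{(k)}$ with the GSF defined by the $\eps$-wise derivative $\left(v_\eps^{(k)}\right)$. Hence, for any representative $[c_\eps]=c$,
\[
f^{(k)}(c)=\left[v_\eps^{(k)}(c_\eps)\right]\in\rcrho.
\]

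Next I would evaluate this net using the closed form \eqref{eq:der_k}, which for the representative $\bar{x}_\eps=c_\eps$ reads
\[
v_\eps^{(k)}(c_\eps)=\sum_{n=k}^{+\infty}a_{n\eps}(c_\eps-c_\eps)^{n-k}\prod_{j=0}^{k-1}(n-j).
\]
Every summand with $n>k$ carries the factor $(c_\eps-c_\eps)^{n-k}=0$, so only the term $n=k$ survives, and there $\prod_{j=0}^{k-1}(k-j)=k!$. Therefore $v_\eps^{(k)}(c_\eps)=k!\,a_{k\eps}$ for $\eps$ small, and passing to equivalence classes (recall $a_k=[a_{k\eps}]\in\rti$ by Rem.~\ref{rem:radConv}.\ref{enu:whyR}) yields $f^{(k)}(c)=[k!\,a_{k\eps}]=k!\,a_k$, i.e.~$a_k=\frac{f^{(k)}(c)}{k!}$, as claimed.

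The only genuinely delicate point is the legitimacy of differentiating the hyperseries term by term, together with the independence of $f^{(k)}(c)$ from the chosen representatives of $c$ and of the coefficients. Both are supplied by Cor.~\ref{cor:derGRAF} and Thm.~\ref{thm:dertoconv} (which require $\sigma\le\rho^{*}$), whose proofs in turn rest on the moderateness condition Def.~\ref{def:setOfConv}.\ref{enu:ConvDerMod} built into the set of convergence; once this machinery is in place, the surviving-single-term computation at the centre $c$ is immediate.
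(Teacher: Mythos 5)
Your proposal is correct and follows essentially the same route as the paper's proof: invoke Cor.~\ref{cor:derGRAF} to compute $f^{(k)}$ term by term as the $\eps$-wise $k$-th derivative of the defining net, then evaluate at $x=c$ (a sharply interior point by Thm.~\ref{thm:setConvNotTrivial}), where every term with $n>k$ vanishes and the surviving term gives $f^{(k)}(c)=k!\,a_{k}$. The paper's proof is just a compressed version of yours, stating the formula $f^{(k)}(x)=\left[\sum_{n=k}^{\infty}a_{n\eps}k!{n \choose k}(x_{\eps}-c_{\eps})^{n-k}\right]$ and evaluating at $c$ in two sentences.
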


\begin{proof}
From Cor.~\ref{cor:derGRAF}, we have $f^{(k)}(x)=\left[\sum_{n=k}^{\infty}a_{n\eps}k!{n \choose k}(x_{\eps}-c_{\eps})^{n-k}\right]$
for all the interior points $x\in\setconv{a_{n}}{c}$. For $x=c$
(which is always a sharply interior point because of Thm.~\ref{thm:setConvNotTrivial})
this yields the conclusion.
\end{proof}
\begin{cor}
\label{cor:TaylorCoeff}Let $\sigma\le\rho^{*}$, $U$ be a sharply
open set of $\rcrho$, and $f\in\gsft(U,\rcrho)$. Then for all $c\in U$
the Taylor coefficients $\left(\frac{f^{(n)}(c)}{n!}\right)_{\text{c}}\in\rcrhoc$.
\end{cor}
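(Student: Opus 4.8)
The plan is to reduce the statement to the very definition of GRAF together with the uniqueness of the Taylor coefficients, so that no new estimate is really needed. Fix $c\in U$. By Def.~\ref{def:GRAF} there are $s\in\rcrho_{>0}$ and coefficients $(a_{n})_{\text{c}}=[a_{n\eps}]_{\text{c}}\in\rcrhoc$ such that $(c-s,c+s)\subseteq U\cap\setconv{a_{n}}{c}$ and $f(x)=\hypersum{\rho}{\sigma}a_{n}(x-c)^{n}$ for all $x\in(c-s,c+s)$. The key observation is that these data already hand us a weakly $\rho$-moderate net $(a_{n\eps})_{n,\eps}$ satisfying \eqref{eq:weakMod}; the whole task is thus to show that, as an element of $\rcrhoc$, the net of Taylor coefficients $\left(\frac{f^{(n)}(c)}{n!}\right)_{n,\eps}$ can be represented by exactly this $(a_{n\eps})_{n,\eps}$.

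To carry this out, I would first record the $\eps$-wise representative of $f$. By Thm.~\ref{thm:GRAFareGSF}, on $(c-s,c+s)$ the function $f$ is the GSF defined by $v_{\eps}(y):=\sum_{n=0}^{\infty}a_{n\eps}(y-c_{\eps})^{n}$, where $[c_{\eps}]=c$; since $(a_{n})_{\text{c}}\in\rcrhoc$, the ordinary radius of convergence satisfies $r_{\eps}\ge\rho_{\eps}^{Q}>0$ (as in the proof of Lem.~\ref{thm:radConvTool}.\ref{enu:radConvZero}), so each $v_{\eps}$ is real-analytic, hence smooth, on a genuine neighborhood of $c_{\eps}$. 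Differentiating term by term gives $v_{\eps}^{(n)}(c_{\eps})=n!\,a_{n\eps}$ for every $n$ and every small $\eps$. Since derivatives of a GSF are computed on representatives (Cor.~\ref{cor:derGRAF}, and the formula displayed in the proof of Thm.~\ref{thm:uniqueness} evaluated at $x=c$), we obtain $f^{(n)}(c)=[v_{\eps}^{(n)}(c_{\eps})]=[n!\,a_{n\eps}]$, hence $\frac{f^{(n)}(c)}{n!}=[a_{n\eps}]=a_{n}$ in $\rcrho$ for each fixed $n$. Consequently the bivariate net $\left(\frac{f^{(n)}(c)}{n!}\right)_{n,\eps}$ admits $(a_{n\eps})_{n,\eps}$ as a representative, and this representative satisfies \eqref{eq:weakMod} by hypothesis, so $\left(\frac{f^{(n)}(c)}{n!}\right)_{\text{c}}=(a_{n})_{\text{c}}\in\rcrhoc$, which is the claim.

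The main point to watch is that membership in $\rcrhoc$ is a condition that is \emph{uniform in} $n$ (the bound $|a_{n\eps}|\le\rho_{\eps}^{-nQ-R}$ must hold for $\forall^{0}\eps\,\forall n\in\N$), whereas Thm.~\ref{thm:uniqueness} as stated only yields the equalities $a_{k}=\frac{f^{(k)}(c)}{k!}$ one index $k$ at a time. One must therefore avoid a purely index-by-index argument and instead produce a single net over $(n,\eps)$ whose uniform moderateness is already secured; choosing the representative $v_{\eps}$ of Thm.~\ref{thm:GRAFareGSF} achieves precisely this, because the identity $v_{\eps}^{(n)}(c_{\eps})=n!\,a_{n\eps}$ holds simultaneously for all $n$ and all small $\eps$. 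A final minor check is that the conclusion does not depend on the chosen GRAF data $(s,(a_{n})_{\text{c}})$ at $c$: this is guaranteed by the uniqueness of the coefficients in Thm.~\ref{thm:uniqueness}, which forces any admissible coefficient class to equal the Taylor class just constructed.
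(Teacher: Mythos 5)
Your proof is correct and is essentially the paper's own (implicit) argument: the corollary follows at once because Def.~\ref{def:GRAF} already supplies a weakly $\rho$-moderate net $(a_{n\eps})_{n,\eps}$ with $(a_{n})_{\text{c}}\in\rcrhoc$, and Thm.~\ref{thm:uniqueness} identifies each component $a_{n}$ with $\frac{f^{(n)}(c)}{n!}$, so your explicit computation with the representative $v_{\eps}$ merely re-derives that theorem. One caveat: your closing remark overstates what uniqueness gives --- the equalities $a_{k}=\frac{f^{(k)}(c)}{k!}$ for each fixed $k$ (i.e.~a ``$\forall k\,\forall^{0}\eps$'' statement) do not imply that two admissible coefficient classes are strongly equivalent in the sense of \eqref{eq:strongEq} (which requires ``$\forall^{0}\eps\,\forall n$''), hence they do not force equality in $\rcrhoc$; fortunately that extra claim is not needed for the membership assertion the corollary actually makes.
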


\noindent The definition of $1$-dimensional integral of GSF by using
primitives, allows us to get a simple proof of the term by term integration
of GRAF:
\begin{thm}
\label{thm:integration}In the assumptions of the previous theorem,
set
\[
F(x):=\hypersum{\rho}{\sigma}\frac{a_{n}(x-c)^{n+1}}{n+1}
\]
for all the interior points $x\in\setconv{a_{n}}{c}$. Then $F(x)=\int_{c}^{x}f(x)\,\diff{x}$
and $F$ is a GRAF on the interior points of $\setconv{a_{n}}{c}$.
\end{thm}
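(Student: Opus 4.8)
The plan is to recognize $F$ as the convergent HPS whose derived series is exactly $f$, and then to lean on the already-established machinery: the equality of sets of convergence for a series and its derived series, the GSF structure of a convergent HPS, and the primitive-based definition of the one-dimensional integral. First I would rewrite $F$ in standard HPS form. Reindexing the hyperfinite sums by $m=n+1$ gives $F(x)=\hypersum{\rho}{\sigma}A_m(x-c)^m$ with $A_0=0$ and $A_m=\frac{a_{m-1}}{m}$ for $m\ge 1$, the shift of the upper summation bound by a single (negligible) term being absorbed in the hyperlimit. I would then check $(A_m)_{\text{c}}\in\rcrhoc$: from $|a_{m-1,\eps}|\le\rho_\eps^{-(m-1)Q-R}$ and $\rho_\eps<1$ one gets $|A_{m\eps}|\le|a_{m-1,\eps}|\le\rho_\eps^{-mQ-R}$, so the coefficients of $F$ are weakly $\rho$-moderate.

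Next I would compute the derived series of $F$, which is $\hypersum{\rho}{\sigma}(m+1)A_{m+1}(x-c)^m=\hypersum{\rho}{\sigma}a_m(x-c)^m=f(x)$. By Thm.~\ref{thm:dertoconv}, an HPS and its derived series share the same set of convergence, so $\setconv{A_m}{c}=\setconv{a_n}{c}$. Consequently, at every interior point of $\setconv{a_n}{c}$ the series defining $F$ converges, and by Thm.~\ref{thm:GRAFareGSF} it defines a GSF; being a convergent HPS on the interior of its set of convergence, $F$ is a GRAF there, which settles the second assertion.

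For the integral identity I would use that the one-dimensional integral of a GSF is defined through primitives, so it suffices to exhibit $F$ as the primitive of $f$ that vanishes at $c$. Evaluating at $x=c$ gives $F(c)=0$, since every summand $\frac{a_n(c-c)^{n+1}}{n+1}$ vanishes. Applying Cor.~\ref{cor:derGRAF} to the GRAF $F$, its derivative is computed by the derived HPS, which we have just identified with $f$; hence $F'=f$. The fundamental theorem of calculus for GSF then yields $\int_c^x f\,\diff{x}=F(x)-F(c)=F(x)$.

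I expect the only genuinely delicate point to be the reindexing step, namely justifying that the shift $m=n+1$ (which alters the upper hyperfinite summation bound) leaves the hyperlimit unchanged; this is handled exactly as the analogous shifts in the proof of Thm.~\ref{thm:dertoconv}, using that the extra tail term is negligible for $x$ interior to the set of convergence. Everything else is a direct transcription of the previously proved results, so the theorem is essentially a corollary of Thm.~\ref{thm:dertoconv}, Cor.~\ref{cor:derGRAF}, and the primitive-based integration of GSF.
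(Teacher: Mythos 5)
Your proof is correct, and its overall skeleton coincides with the paper's: show that the antiderivative series has the same set of convergence as $f$, conclude that $F$ is a GRAF on the interior points, then combine $F'=f$ (Cor.~\ref{cor:derGRAF}), $F(c)=0$, and the primitive-based definition of the GSF integral from \cite{TI}. The one genuine difference lies in how the first step is justified. The paper says the equality of the sets of convergence ``can be done as in Thm.~\ref{thm:dertoconv}'', i.e.~it asks the reader to repeat that proof with the roles of series and derived series exchanged. You instead reindex $F$ as $\hypersum{\rho}{\sigma}A_{m}(x-c)^{m}$ with $A_{0}=0$, $A_{m}=a_{m-1}/m$ for $m\ge1$, verify $(A_{m})_{\rm c}\in\rcrhoc$ via $|A_{m\eps}|\le\rho_{\eps}^{-(m-1)Q-R}\le\rho_{\eps}^{-mQ-R}$, and observe that the derived series of $(A_{m})$ is literally the series of $f$, so that Thm.~\ref{thm:dertoconv} applies verbatim as a black box and yields $\setconv{A_{m}}{c}=\setconv{a_{n}}{c}$. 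This buys economy and makes the statement a genuine corollary of Thm.~\ref{thm:dertoconv} rather than a replay of its proof, at the cost of the reindexing bookkeeping (the one-term shift of the hyperfinite partial sums, negligible because the summands of a convergent hyperseries tend to zero) and the coefficient-moderateness check, both of which you handle correctly. The rest of your argument (GSF structure via Thm.~\ref{thm:GRAFareGSF}, the GRAF property on interior points, and $\int_{c}^{x}f\,\diff{x}=F(x)-F(c)=F(x)$) is identical to the paper's.
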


\begin{proof}
The proof that $\hypersum{\rho}{\sigma}\frac{a_{n}(x-c)^{n+1}}{n+1}$
is a convergent HPS with the same set of convergence of $f$ can be
done as in Thm.~\ref{thm:dertoconv}, and hence $F$ is a GRAF on
the interior points of $\setconv{a_{n}}{c}$. The remaining part of
the proof is straightforward by using Cor.~\ref{cor:derGRAF}, so
that $F'(x)=f\left(x\right)$ and $F(c)=0$. and using \cite[Thm.~42, Def.~43]{TI}.
\end{proof}
\noindent We close this section by first noting that, differently
with respect to the classical theory, if $f(x)=\hypersum{\rho}{\sigma}a_{n}(x-c)^{n}$
for all $x\in\setconv{a_{n}}{c}$, and we take another point $\bar{c}\in\setconv{a_{n}}{c}$,
we do not have that $(\bar{c}-\radconv{a_{n}}+|c-\bar{c}|,\bar{c}+\radconv{a_{n}}-|c-\bar{c}|)\subseteq\setconv{a_{n}}{c}$;
in fact for $c=\bar{c}$ this would yield the false equality $(c-\radconv{a_{n}},c+\radconv{a_{n}})=\setconv{a_{n}}{c}$.
On the other hand, in the following result we show that $\setconv{\frac{f^{(n)}(\bar{c})}{n!}}{\bar{c}}\subseteq\setconv{\frac{f^{(n)}(c)}{n!}}{c}$:
\begin{thm}
\label{thm:twoPointsInSetConv}In the assumptions of Thm.~\ref{thm:uniqueness},
if $\bar{c}\in\setconv{\frac{f^{(n)}(c)}{n!}}{c}$, then $\setconv{\frac{f^{(n)}(\bar{c})}{n!}}{\bar{c}}\subseteq\setconv{\frac{f^{(n)}(c)}{n!}}{c}$.
\end{thm}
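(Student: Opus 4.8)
Write $a_{n}:=\frac{f^{(n)}(c)}{n!}$ and $b_{n}:=\frac{f^{(n)}(\bar{c})}{n!}$, so that by Thm.~\ref{thm:uniqueness} the left-hand set is $\setconv{a_{n}}{c}$, while by Cor.~\ref{cor:TaylorCoeff} we have $(b_{n})_{\rm c}\in\rcrhoc$ and the right-hand set is $\setconv{b_{n}}{\bar{c}}$. Fix representatives $[c_{\eps}]=c$, $[\bar{c}_{\eps}]=\bar{c}$, $[a_{n\eps}]_{\rm c}=(a_{n})_{\rm c}$, and put $v_{\eps}(y):=\sum_{k=0}^{+\infty}a_{k\eps}(y-c_{\eps})^{k}$. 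By Cor.~\ref{cor:derGRAF} and the derivative formula used in the proof of Thm.~\ref{thm:uniqueness}, a representative of $b_{n\eps}$ is the classical Taylor coefficient $\frac{1}{n!}v_{\eps}^{(n)}(\bar{c}_{\eps})=\sum_{k\ge n}a_{k\eps}{k \choose n}(\bar{c}_{\eps}-c_{\eps})^{k-n}$, so that $w_{\eps}(y):=\sum_{n=0}^{+\infty}b_{n\eps}(y-\bar{c}_{\eps})^{n}$ is precisely the re-expansion of the ordinary real-analytic function $v_{\eps}$ about $\bar{c}_{\eps}$. The plan is to take an arbitrary $x\in\setconv{b_{n}}{\bar{c}}$ and to verify, for the coefficients $(a_{n})_{\rm c}$ and the centre $c$, the four conditions of Def.~\ref{def:setOfConv}; by Lem.~\ref{lem:indepRepr} it suffices to do so for one convenient choice of representatives of $x$.

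The $\eps$-wise engine is the classical identity principle for the pair $v_{\eps}$, $w_{\eps}$: since $w_{\eps}$ is obtained from $v_{\eps}$ by re-centring, the binomial re-expansion gives the pointwise identity $\sum_{k}a_{k\eps}(x_{\eps}-c_{\eps})^{k}=\sum_{n}b_{n\eps}(x_{\eps}-\bar{c}_{\eps})^{n}$ on the overlap of the two intervals of convergence, and likewise $v_{\eps}^{(k)}=w_{\eps}^{(k)}$ there. Consequently, once we know that $x_{\eps}$ lies in the interval of convergence of $v_{\eps}$ for $\eps$ small, the hyperseries value $\hypersum{\rho}{\sigma}b_{n}(x-\bar{c})^{n}=[w_{\eps}(x_{\eps})]$ coincides with $[\sum_{k}a_{k\eps}(x_{\eps}-c_{\eps})^{k}]\in\rcrho$, which is exactly Def.~\ref{def:setOfConv}.\ref{enu:ConvConv} for $(a_{n})_{\rm c}$. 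The two remaining ``soft'' conditions are then routine: the formal-HPS property Def.~\ref{def:setOfConv}.\ref{enu:ConvFHPS} follows from the geometric-type bound on hyperfinite sums already exploited in Thm.~\ref{thm:setConvNotTrivial} (applied to $(a_{n})_{\rm c}$ with $|x-c|$ bounded strictly below $\radconv{a_{n}}$), while the moderate-derivative condition Def.~\ref{def:setOfConv}.\ref{enu:ConvDerMod} transfers through $v_{\eps}^{(k)}=w_{\eps}^{(k)}$ together with the derivative estimate \eqref{eq:der_k}.

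The heart of the argument — and the step I expect to be the main obstacle — is condition Def.~\ref{def:setOfConv}.\ref{enu:ConvRadConv}, i.e.\ deducing $|x-c|<\radconv{a_{n}}$ from $x\in\setconv{b_{n}}{\bar{c}}$, since only this places $x_{\eps}$ inside the convergence interval of $v_{\eps}$ and makes the overlap identity of the previous paragraph available. The naive triangle-inequality bound $|x-c|\le|x-\bar{c}|+|\bar{c}-c|<\radconv{b_{n}}+|\bar{c}-c|$ is \emph{too weak}, because the re-expanded radius $\radconv{b_{n}}$ can strictly exceed $\radconv{a_{n}}-|\bar{c}-c|$; indeed $w_{\eps}$ is the restriction to $\R$ of the holomorphic continuation of $v_{\eps}$, so $w_{\eps}$ may converge at real points beyond $(c_{\eps}-r_{\eps},c_{\eps}+r_{\eps})$, and it is exactly this ``continuation past the original radius'' phenomenon that one must rule out. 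The resolution I would pursue exploits the distinction, emphasised after Def.~\ref{def:setOfConv}, between the hyperlimit $\hypersum{\rho}{\sigma}a_{n}(x-c)^{n}$ and the $\eps$-wise sum: membership in the set of convergence demands convergence of the ordinary net $(\sum_{k}a_{k\eps}(x_{\eps}-c_{\eps})^{k})$ to a $\rho$-moderate number, which is impossible on a cofinal set where $|x_{\eps}-c_{\eps}|>r_{\eps}$.

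Concretely, I would reduce to subpoints via the dichotomy Thm.~\ref{thm:radConvTool}.\ref{enu:radConvDichotomy}: it suffices to establish $|x-c|<\radconv{a_{n}}$ separately on each cofinal $L\subzero I$ on which $\radconv{a_{n}}|_{L}$ is either $\rho$-moderate or larger than every power $\diff\rho^{-Q}$ (the latter including $+\infty$, where there is nothing to prove). On the moderate subpoints, $r_{\eps}=\radconveps{a_{n}}$ is for small $\eps\in L$ the genuine radius of $v_{\eps}$, and I would argue that the generalized requirement Def.~\ref{def:setOfConv}.\ref{enu:ConvConv}, read on $L$, is incompatible with $|x_{\eps}-c_{\eps}|\ge r_{\eps}$ occurring cofinally, forcing $|x-c|\le_{L}\radconv{a_{n}}$ and then, by the sharp strictness coming from $x\in\setconv{b_{n}}{\bar{c}}$ and Thm.~\ref{thm:radConvTool}.\ref{enu:radConvModLe}, the strict inequality. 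The $\rti$-convexity and strong connectedness of the set of convergence (Thm.~\ref{thm:absConvTopProp}.\ref{enu:ConvConvex}--\ref{enu:ConvConn}) let one propagate the overlap identity along the whole segment $[\bar{c},x]$ starting from $\bar{c}$, where it holds automatically. Reassembling the subpoints through Thm.~\ref{thm:radConvTool}.\ref{enu:radConvDichotomy} yields Def.~\ref{def:setOfConv}.\ref{enu:ConvRadConv} for all small $\eps$, and with the three conditions above completes the inclusion $\setconv{b_{n}}{\bar{c}}\subseteq\setconv{a_{n}}{c}$.
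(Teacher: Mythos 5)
You have correctly isolated the crux --- deducing condition Def.~\ref{def:setOfConv}.\ref{enu:ConvRadConv}, i.e.\ $|x-c|<\radconv{a_{n}}$, from $x\in\setconv{b_{n}}{\bar{c}}$ --- and your observation that the triangle inequality cannot give it (because $\radconv{b_{n}}$ may exceed $\radconv{a_{n}}-|\bar{c}-c|$) is exactly right. But your proposed resolution of this crux is circular, and this is a genuine gap. The hypothesis $x\in\setconv{b_{n}}{\bar{c}}$ provides conditions \ref{enu:ConvFHPS}--\ref{enu:ConvDerMod} of Def.~\ref{def:setOfConv} only for the re-expanded coefficients $(b_{n})_{\rm c}$ at centre $\bar{c}$; it says nothing at all about the net $\left(\sum_{k}a_{k\eps}(x_{\eps}-c_{\eps})^{k}\right)$. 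Hence, on the moderate subpoints of the dichotomy Thm.~\ref{thm:radConvTool}.\ref{enu:radConvDichotomy} there is no ``incompatibility'' to exploit: if $|x_{\eps}-c_{\eps}|>r_{\eps}$ cofinally, the net $\sum_{k}a_{k\eps}(x_{\eps}-c_{\eps})^{k}$ simply diverges there and $x$ fails to belong to $\setconv{a_{n}}{c}$, contradicting nothing you are given --- your argument tacitly assumes Def.~\ref{def:setOfConv}.\ref{enu:ConvConv} for $(a_{n})_{\rm c}$ at $x$, which is part of the conclusion. The appeal to Thm.~\ref{thm:absConvTopProp}.\ref{enu:ConvConvex}--\ref{enu:ConvConn} does not repair this: ``propagating the overlap identity along $[\bar{c},x]$'' is precisely analytic continuation, i.e.\ the phenomenon you yourself flagged, and that theorem moreover requires an eventual $\rti$-boundedness hypothesis for $\left(a_{n}(x-c)^{n}\right)_{n}$ that you have not verified.

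In fact no argument can close this gap, because the obstruction you identified is real. Take $a_{n}$ to be the constant Taylor coefficients of $\log(1+x)$ at $c=0$, so $\radconv{a_{n}}=1$ and $\bar{c}=\frac{1}{2}$ is a sharply interior point of $\setconv{a_{n}}{0}$; then $b_{n}=\frac{f^{(n)}(\bar{c})}{n!}=\frac{(-1)^{n-1}}{n}\left(\frac{2}{3}\right)^{n}$ for $n\ge1$, so $\radconv{b_{n}}=\frac{3}{2}$, and for the standard point $x=1.4$ all conditions of Def.~\ref{def:setOfConv} for $(b_{n})_{\rm c}$ at $\bar{c}$ hold (the tails are dominated by a geometric series and $\sigma\le\rho^{*}$), while $|x-c|=1.4>1=\radconv{a_{n}}$ and $\sum_{n}a_{n\eps}(x_{\eps}-c_{\eps})^{n}$ diverges; thus $x\in\setconv{b_{n}}{\bar{c}}\setminus\setconv{a_{n}}{c}$. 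For comparison, the paper's own proof takes a different and much shorter route: it substitutes the HPS expansion of $f^{(n)}(\bar{c})$ in powers of $\bar{c}-c$ into the $(b_{n})$-HPS and then rearranges the resulting double series $\eps$-wise following the classical argument of \cite{KrPa02}. That rearrangement, however, is valid only when the double series converges absolutely, essentially when $|x-\bar{c}|+|\bar{c}-c|<\radconv{a_{n}}$, so the paper silently assumes the very condition at issue (and its first line already writes $f(x)$ at points where $f$ need not be defined). Your diagnosis of the obstacle is therefore sharper than the paper's treatment of it, but neither your proposal nor the paper's proof establishes the inclusion beyond the region $|x-\bar{c}|+|\bar{c}-c|<\radconv{a_{n}}$, which is where a correct proof (via absolute convergence and $\eps$-wise rearrangement) actually goes through.
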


\begin{proof}
In fact, since $\bar{c}\in\setconv{\frac{f^{(n)}(c)}{n!}}{c}$, we
have
\[
f^{(n)}(\bar{c})=\ \parthypersumarg{\rho\,\,}{\sigma}{m}{\ge n}\frac{f^{(m)}(c)}{m!}n!{m \choose n}(x-c)^{m-n}.
\]
Thereby, if $x\in\setconv{\frac{f^{(n)}(\bar{c})}{n!}}{\bar{c}}$,
we have
\begin{align*}
f(x) & =\hypersum{\rho}{\sigma}\frac{f^{(n)}(\bar{c})}{n!}(x-\bar{c})^{n}\\
 & =\hypersum{\rho}{\sigma}\frac{(x-\bar{c})^{n}}{n!}\cdot\ \parthypersumarg{\rho\,\,\,}{\sigma}{m}{\ge n}\frac{f^{(m)}(c)}{m!}n!{m \choose n}(x-c)^{m-n}\\
 & =\left[\sum_{n=0}^{+\infty}\frac{(x_{\eps}-\bar{c}_{\eps})^{n}}{n!}\sum_{m\ge n}^{+\infty}\frac{f_{\eps}^{(m)}(c_{\eps})}{m!}n!{m \choose n}(x_{\eps}-c_{\eps})^{m-n}\right].
\end{align*}
Therefore, the usual proof, see e.g.~\cite{KrPa02}, yields
\[
\sum_{n=0}^{+\infty}\frac{(x_{\eps}-\bar{c}_{\eps})^{n}}{n!}\sum_{m\ge n}^{+\infty}\frac{f_{\eps}^{(m)}(c_{\eps})}{m!}n!{m \choose n}(x_{\eps}-c_{\eps})^{m-n}=\sum_{n=0}^{+\infty}\frac{f_{\eps}^{(n)}(c)}{n!}(x_{\eps}-c_{\eps})^{n}
\]
and hence $f(x)=\hypersum{\rho}{\sigma}\frac{f^{(n)}(c)}{n!}(x-c)^{n}=\left[\sum_{n=0}^{+\infty}\frac{f_{\eps}^{(n)}(c)}{n!}(x_{\eps}-c_{\eps})^{n}\right]$,
which implies the conclusion.
\end{proof}

\section{Characterization of generalized real analytic functions, inversion
and identity principle}

The classical characterization of real analytic functions by the growth
rate of the derivatives establishes a difference between GRAF and
Colombeau real analytic functions:
\begin{thm}
\label{thm:growthDers}Let $\sigma\le\rho^{*}$, $U$ be a sharply
open set of $\rcrho$, and $f\in\gsf(U,\rti)$ be a GSF defined by
the net $(f_{\eps})$. Then $f\in\gsft(U,\rti)$ if and only if for
each $c\in U$ there exist $s=[s_{\eps}]$, $C=\left[C_{\eps}\right]$,
$R=\left[R_{\eps}\right]\in\rti_{>0}$ such that $B_{s}(c)\subseteq U$
and
\begin{equation}
\forall[x_{\eps}]\in B_{s}(c)\,\forall^{0}\eps\,\forall n\in\N:\ \left|f_{\eps}^{(n)}(x_{\eps})\right|\le C_{\eps}\frac{n!}{R_{\eps}^{n}}.\label{eq:growthDers}
\end{equation}
\end{thm}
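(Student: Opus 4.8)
The plan is to prove both implications $\eps$-wise, reducing each to the classical characterization and then controlling moderateness and the passage from an ordinary $\eps$-wise series to a hyperseries; the latter is where $\sigma\le\rho^{*}$ enters decisively.

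For the direction ``$f\in\gsft(U,\rti)\Rightarrow$ \eqref{eq:growthDers}'', I would fix $c\in U$ and use Def.~\ref{def:GRAF} to write $f(x)=\hypersum{\rho}{\sigma}a_{n}(x-c)^{n}$ on some ball $B_{s_{0}}(c)\subseteq U\cap\setconv{a_{n}}{c}$, with $(a_{n})_{\mathrm{c}}=[a_{n\eps}]_{\mathrm{c}}\in\rcrhoc$. The key observation is that weak $\rho$-moderateness already furnishes a \emph{geometric} coefficient bound: by Def.~\ref{def:radCon}.\ref{enu:coeffHPS} there are $Q,R_{0}\in\N$ with $|a_{n\eps}|\le\rho_{\eps}^{-nQ-R_{0}}=\rho_{\eps}^{-R_{0}}(\rho_{\eps}^{-Q})^{n}$ for $\eps$ small and all $n$. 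By Thm.~\ref{thm:GRAFareGSF} the GSF $f$ is defined on its set of convergence by the analytic net $v_{\eps}(y)=\sum_{k}a_{k\eps}(y-c_{\eps})^{k}$, and by Cor.~\ref{cor:derGRAF} its derivatives are the derived HPS, so $f^{(n)}(x)=[v_{\eps}^{(n)}(x_{\eps})]$. Shrinking the radius to $s:=\min(s_{0},\diff\rho^{Q}/2)$ forces $\rho_{\eps}^{-Q}|x_{\eps}-c_{\eps}|\le\tfrac12$ on $B_{s}(c)$, and the binomial series $\sum_{n\ge k}\binom{n}{k}y^{n-k}=(1-y)^{-(k+1)}$ already used in Thm.~\ref{thm:setConvNotTrivial} yields
\[
|v_{\eps}^{(n)}(x_{\eps})|\le\sum_{k\ge n}\rho_{\eps}^{-kQ-R_{0}}\tfrac{k!}{(k-n)!}|x_{\eps}-c_{\eps}|^{k-n}\le 2\rho_{\eps}^{-R_{0}}\,n!\,(2\rho_{\eps}^{-Q})^{n},
\]
which is exactly \eqref{eq:growthDers} with $C:=2\diff\rho^{-R_{0}}$ and $R:=\diff\rho^{Q}/2$ in $\rti_{>0}$. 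Strictly, this establishes the estimate for the analytic representative $(v_{\eps})$ defining $f$ on $B_{s}(c)$, which is the form in which the estimate is needed.

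For the converse ``\eqref{eq:growthDers}$\Rightarrow f\in\gsft(U,\rti)$'', I would fix $c\in U$ and set $a_{n\eps}:=f_{\eps}^{(n)}(c_{\eps})/n!$. Evaluating \eqref{eq:growthDers} at $x=c$ gives $|a_{n\eps}|\le C_{\eps}R_{\eps}^{-n}$, uniformly in $n$ for $\eps$ small; since $C$ is moderate and $R\in\rti_{>0}$ is invertible, this is the weak $\rho$-moderateness $|a_{n\eps}|\le\rho_{\eps}^{-nQ_{1}-R_{1}}$, so $(a_{n})_{\mathrm{c}}\in\rcrhoc$. The same bound gives $\radconv{a_{n}}\ge R$, so I may take $s':=\min(s,R/2)$, put $h:=|x-c|/R\le\tfrac12<1$ for $x\in B_{s'}(c)$, and verify the four conditions of Def.~\ref{def:setOfConv} on $B_{s'}(c)$. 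Conditions \ref{enu:ConvRadConv} (since $|x-c|<R\le\radconv{a_{n}}$), \ref{enu:ConvFHPS} (since $|\sum_{n=N_{\eps}}^{M_{\eps}}a_{n\eps}(x_{\eps}-c_{\eps})^{n}|\le C_{\eps}\sum_{n}h_{\eps}^{n}\le C_{\eps}/(1-h_{\eps})$ is moderate) and \ref{enu:ConvDerMod} (immediate from \eqref{eq:growthDers} applied to any representative $\bar{x}$, which lands in $B_{s}(c)$) are routine.

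The crux is Def.~\ref{def:setOfConv}.\ref{enu:ConvConv}, i.e.\ that the \emph{hyperseries} converges to the $\eps$-wise sum. Here I would use Lagrange's remainder: for each $\eps$ and each degree $N$ there is $\xi_{\eps}\in[c_{\eps},x_{\eps}]\cup[x_{\eps},c_{\eps}]\subseteq B_{s}(c)$ with
\[
\Big|\,f_{\eps}(x_{\eps})-\sum_{n=0}^{N}a_{n\eps}(x_{\eps}-c_{\eps})^{n}\Big|=\frac{|f_{\eps}^{(N+1)}(\xi_{\eps})|}{(N+1)!}|x_{\eps}-c_{\eps}|^{N+1}\le C_{\eps}h_{\eps}^{N+1}.
\]
For fixed $\eps$ this forces $\sum_{n=0}^{\infty}a_{n\eps}(x_{\eps}-c_{\eps})^{n}=f_{\eps}(x_{\eps})$, a $\rho$-moderate net equal to $f(x)$. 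To promote this ordinary convergence to the hyperlimit $\hypersum{\rho}{\sigma}a_{n}(x-c)^{n}$ I would invoke the criterion recalled after Def.~\ref{def:setOfConv} (see \cite[Thm.~12, Thm.~13]{TiGi}): it suffices to produce, for every $q\in\N$, an index $[N_{\eps}^{q}]\in\hypNs$ making the tail $\le\rho_{\eps}^{q}$. Since $h\le\tfrac12$, solving $C_{\eps}h_{\eps}^{N+1}\le\rho_{\eps}^{q}$ needs only $N_{\eps}^{q}$ of order $(q+R_{1})|\log\rho_{\eps}|$, and the hypothesis $\sigma\le\rho^{*}$ guarantees that $\hypNs$ contains indices this large, exactly the mechanism used for the exponential in Example~\ref{exa:Exponential}. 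This is the main obstacle and the only place where the gauge relation is essential. Having verified all four conditions, $B_{s'}(c)\subseteq U\cap\setconv{a_{n}}{c}$ and $f=\hypersum{\rho}{\sigma}a_{n}(x-c)^{n}$ there, so $f\in\gsft(U,\rti)$; that the coefficients are the Taylor ones $a_{n}=f^{(n)}(c)/n!$ is then Thm.~\ref{thm:uniqueness}.
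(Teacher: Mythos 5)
Your proposal is correct, and it splits relative to the paper's proof: the necessity half is essentially the paper's argument, while the sufficiency half takes a genuinely different, more self-contained route. For necessity, the paper first applies Lem.~\ref{lem:eventBoundNear_c} --- whose proof is exactly the geometric bound $|a_{n\eps}|\le\rho_{\eps}^{-nQ-R_{0}}$ you use directly --- to get $|f_{\eps}^{(j)}(c_{\eps})|\le K_{\eps}\,j!/S_{\eps}^{j}$, and then performs the same binomial-series re-expansion of the $n$-th derivative around $c_{\eps}$ that you do; you simply inline the lemma (obtaining the admissible infinitesimal radius $R=\diff\rho^{Q}/2$ where the paper gets $R=S(1-s/S)$), and both proofs make the same representative-transfer move from the analytic net $(v_{\eps})$ to the given net $(f_{\eps})$, which you flag explicitly and the paper dispatches by saying \eqref{eq:growthDers} ``can also be formulated as an inequality in $\rcrhoc$''. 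For sufficiency, both arguments hinge on the Lagrange remainder and on $\sigma\le\rho^{*}$, but the mechanisms differ: the paper proves the two equalities $f(x)=\hypersum{\rho}{\sigma}\frac{f^{(n)}(c)}{n!}(x-c)^{n}$ and $\hypersum{\rho}{\sigma}\frac{f^{(n)}(c)}{n!}(x-c)^{n}=\left[\sum_{n=0}^{+\infty}\frac{f_{\eps}^{(n)}(c_{\eps})}{n!}(x_{\eps}-c_{\eps})^{n}\right]$ by hyperlimit and geometric-hyperseries estimates, and then concludes $x\in\setconv{\frac{f^{(n)}(c)}{n!}}{c}$ by passing through an auxiliary point $\bar{x}$, eventual $\rti$-boundedness, and Cor.~\ref{cor:setConv_sigma<=00003Drho*}; you instead verify the four conditions of Def.~\ref{def:setOfConv} at $x$ directly, promoting the $\eps$-wise convergence to the hyperseries statement by exhibiting an explicit $\sigma$-moderate index $N_{\eps}^{q}$ of order $(q+R_{1})\left|\log\rho_{\eps}\right|$ via the criterion of \cite[Thm.~12, Thm.~13]{TiGi}. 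What your route buys is independence from Cor.~\ref{cor:setConv_sigma<=00003Drho*}: you never need the auxiliary $\bar{x}$, whose status as a ``convergent HPS'' point (formally required to invoke that corollary) the paper asserts only loosely; the cost is redoing the moderateness checks by hand. One small repair is needed: Def.~\ref{def:setOfConv}.\ref{enu:ConvDerMod} is not ``immediate from \eqref{eq:growthDers}'', because \eqref{eq:growthDers} bounds $f_{\eps}^{(k)}$, not the $k$-th derivative of the Taylor series of $f_{\eps}$ centered at $c_{\eps}$ (the net $(f_{\eps})$ is only smooth, not assumed analytic); the bound you need follows in one line from your coefficient estimate $|a_{n\eps}|\le C_{\eps}R_{\eps}^{-n}$ and the binomial series, exactly as in your verification of condition \ref{enu:ConvFHPS}, or alternatively by first establishing the $\eps$-wise identity $f_{\eps}=\sum_{n}a_{n\eps}(\cdot-c_{\eps})^{n}$ on the ball and only then citing \eqref{eq:growthDers}.
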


\begin{proof}
We prove that condition \eqref{eq:growthDers} is necessary. For $c\in U$,
we have $f(x)=\hypersum{\rho}{\sigma}\frac{f^{(n)}(c)}{n!}\left(x-c\right){}^{n}$
for all $x\in(c-\bar{s},c+\bar{s})$ for some $\bar{s}>0$ from Def.~\ref{def:GRAF}
and Thm.~\ref{thm:uniqueness}. We first note that condition \eqref{eq:growthDers}
can also be formulated as an inequality in $\rcrhoc$ and as such
it does not depend on the representatives involved. Therefore, from
Thm.~\ref{thm:GRAFareGSF} and Thm.~\ref{thm:uniqueness}, without
loss of generality, we can assume that the given net $(f_{\eps})$
is of real analytic functions satisfying $f_{\eps}(x)=\sum_{n=0}^{+\infty}\frac{f_{\eps}^{(n)}(c_{\eps})}{n!}(x-c_{\eps})^{n}$
for all $x\in(c_{\eps}-\radconveps{a_{n}},c_{\eps}+\radconveps{a_{n}})$.
From Lem.~\ref{lem:eventBoundNear_c}, locally the Taylor summands
$\left(\frac{f^{(n)}(c)}{n!}(\bar{x}-c)^{n}\right)_{n\in\N}$ are
eventually $\rti$-bounded in $\rcrhoc$ if $\bar{x}$ is sufficiently
near to $c=[c_{\eps}]$, i.e.~there exists $\sigma\in\rti_{>0}$
such that for each $\bar{x}=[\bar{x}_{\eps}]\in B_{\sigma}(c)$ we
have
\begin{equation}
\forall^{0}\eps\,\forall j\in\N:\ \left|\frac{f_{\eps}^{(j)}(c_{\eps})}{j!}(\bar{x}_{\eps}-c_{\eps})^{j}\right|\le K_{\eps},\label{eq:charEvBo}
\end{equation}
for some $K=[K_{\eps}]\in\rti$. Set $s:=\frac{1}{2}\min(\sigma,\bar{s})\in\R_{>0}$
and $S:=|\bar{x}-c|$, where $\bar{x}$ is any point such that $s<|\bar{x}-c|<\sigma$,
so that $0<\frac{s}{S}<1$ and from \eqref{eq:charEvBo} we obtain
\begin{equation}
\forall^{0}\eps\,\forall j\in\N:\ \left|f_{\eps}^{(j)}(c_{\eps})\right|\le K_{\eps}\frac{j!}{S_{\eps}^{j}}.\label{eq:derMaj}
\end{equation}
For each $[x_{\eps}]\in B_{s}(c)$, we have
\begin{align*}
f_{\eps}^{(n)}(x_{\eps}) & =\sum_{j=n}^{+\infty}\frac{f_{\eps}^{(j)}(c_{\eps})}{j!}n!{j \choose n}(x_{\eps}-c_{\eps})^{j-n},
\end{align*}
and hence from \eqref{eq:derMaj}:
\begin{align*}
\left|\frac{f_{\eps}^{(n)}(x_{\eps})}{n!}\right| & \le\sum_{j=n}^{+\infty}K_{\eps}{j \choose n}\frac{|x_{\eps}-c_{\eps}|^{j-n}}{S_{\eps}^{j}}\\
 & \le\frac{K_{\eps}}{S_{\eps}^{n}}\sum_{j=n}^{\infty}{j \choose n}\left(\frac{s_{\eps}}{S_{\eps}}\right)^{j-n}\\
 & =\frac{K_{\eps}}{S_{\eps}^{n}}\cdot\frac{1}{\left(1-\frac{s_{\eps}}{S_{\eps}}\right)^{n+1}}=\frac{K_{\eps}}{\left(1-\frac{s_{\eps}}{S_{\eps}}\right)}\cdot\frac{1}{\left(S_{\eps}\left(1-\frac{s_{\eps}}{S_{\eps}}\right)\right)^{n}},
\end{align*}
which is our claim for $C:=\frac{K}{1-\frac{s}{S}}$ and $R:=S\left(1-\frac{s}{S}\right)$.
Note that, differently with respect to the case of Colombeau real
analytic functions \cite{PiScVa09}, not necessarily the constant
$\frac{1}{R}$ is finite, e.g.~if $s\approx S$.

We now prove that the condition is sufficient. Let $c=[c_{\eps}]\in U$
and $s=[s_{\eps}]$, $C=[C_{\eps}]$, $R=[R_{\eps}]\in\rti_{>0}$
be the constants satisfying \eqref{eq:growthDers}. Set $\bar{s}:=\frac{1}{2}\min(s,R,\radconv{\frac{f^{(n)}(c)}{n!}})$
and take $x\in B_{\bar{s}}(c)$. We first prove the equality $f(x)=\hypersum{\rho}{\sigma}\frac{f^{(n)}(c)}{n!}(x-c)^{n}$.
Let $N=[N_{\eps}]\in\hypNs$, with $N_{\eps}\in\N$. For all $\eps$,
from Taylor's formula for the smooth $f_{\eps}$, we have
\[
\left|\sum_{n=0}^{N}\frac{f^{(n)}(c)}{n!}(x-c)^{n}-f(x)\right|=\left[\left|\frac{f_{\eps}^{(N_{\eps}+1)}(\xi_{\eps})}{(N_{\eps}+1)!}(x_{\eps}-c_{\eps})^{N_{\eps}+1}\right|\right]
\]
for some $t_{\eps}\in[0,1]_{\R}$ and for $\xi_{\eps}:=(1-t_{\eps})c_{\eps}+t_{\eps}x_{\eps}$.
Since $\left|\xi_{\eps}-c_{\eps}\right|=t_{\eps}\left|x_{\eps}-c_{\eps}\right|<\bar{s}_{\eps}<s_{\eps}$,
we can apply \eqref{eq:growthDers} and get $\forall^{0}\eps\,\forall n\in\N:\ \left|\frac{f_{\eps}^{(n)}(\xi_{\eps})}{n!}\right|\le\frac{C_{\eps}}{R_{\eps}^{n}}$.
Thereby, for these small $\eps$ and for $n=N_{\eps}+1$ we obtain
\[
\left|\sum_{n=0}^{N}\frac{f^{(n)}(c)}{n!}(x-c)^{n}-f(x)\right|\le C\left(\frac{\bar{s}}{R}\right)^{N+1},
\]
and hence the claim follows by $\hyperlim{\rho}{\sigma}\left(\frac{\bar{s}}{R}\right)^{N+1}=0$.\\
Now, we prove that $\hypersum{\rho}{\sigma}\frac{f^{(n)}(c)}{n!}(x-c)^{n}=\left[\sum_{n=0}^{+\infty}\frac{f_{\eps}^{(n)}(c_{\eps})}{n!}(x_{\eps}-c_{\eps})^{n}\right]$.
In fact, once again from \eqref{eq:growthDers} we have
\begin{align*}
\left|\sum_{n=0}^{N}\frac{f^{(n)}(c)}{n!}(x-c)^{n}-\left[\sum_{n=0}^{+\infty}\frac{f_{\eps}^{(n)}(c_{\eps})}{n!}(x_{\eps}-c_{\eps})^{n}\right]\right| & =\left[\left|\sum_{n=N_{\eps}+1}^{+\infty}\frac{f_{\eps}^{(n)}(c_{\eps})}{n!}(x_{\eps}-c_{\eps})^{n}\right|\right]\\
 & \le\left[\sum_{n=N_{\eps}+1}^{+\infty}\frac{C_{\eps}}{R_{\eps}^{n}}\left|x_{\eps}-c_{\eps}\right|^{n}\right]\\
 & \le C\cdot\ \ \parthypersumarg{\rho}{\sigma}{n}{\ge N+1}\left(\frac{\bar{s}}{R}\right)^{n}\to0
\end{align*}
because $\bar{s}<R$ and hence $\hypersum{\rho}{\sigma}\left(\frac{\bar{s}}{R}\right)^{n}$
converges. Finally, take $\bar{x}\in B_{\bar{s}}(c)$ such that $|x-c|<|\bar{x}-c|$.
As above, we can prove that $\hypersum{\rho}{\sigma}\frac{f^{(n)}(c)}{n!}(\bar{x}-c)^{n}$
converges; moreover from \eqref{eq:growthDers} we also have $\forall^{0}\eps\,\forall n\in\N:\ \left|\frac{f_{\eps}^{(n)}(c_{\eps})}{n!}(\bar{x}_{\eps}-c_{\eps})^{n}\right|\le C_{\eps}\left(\frac{\bar{s}_{\eps}}{R_{\eps}}\right)^{n}\le\frac{C_{\eps}}{1-\frac{\bar{s}_{\eps}}{R_{\eps}}}$.
This proves that $\left(\frac{f^{(n)}(c)}{n!}(\bar{x}-c)^{n}\right)_{n\in\N}$
is eventually $\rti$-bounded in $\rcrhoc$ and hence $x\in\setconv{\frac{f^{(n)}(c)}{n!}}{c}$
by Cor.~\ref{cor:setConv_sigma<=00003Drho*}.
\end{proof}
As we have already noted in this proof, differently with respect to
the definition of Colombeau real analytic function \cite{PiScVa09},
we have that, generally speaking, $\frac{1}{R}\in\rti$ is not finite.
For example, for $f=\delta$ at $c=0$, we have $\left|\frac{\delta_{\eps}^{(n)}(x_{\eps})}{n!}\right|=\left|\frac{\mu^{(n)}(x_{\eps})}{n!}b_{\eps}^{n+1}\right|=\left|\frac{\mu^{(n)}(x_{\eps})}{n!}b_{\eps}\right|\frac{1}{\left(b_{\eps}^{-1}\right)^{n}}\le\frac{\bar{C}b_{\eps}}{(b_{\eps}^{-1})^{n}}$,
where $\left|\mu^{(n)}(x_{\eps})\right|\le\int\beta=:\bar{C}$ and
hence $\frac{1}{R}=b$ which is an infinite number. Thereby, in the
particular case when $\frac{1}{R}$ is finite, $f$ is a Colombeau
real analytic function in a neighborhood of $c$. Vice versa, any
Colombeau real analytic function and any ordinary real analytic function
are GRAF.

This characterization also yields the closure of GRAF with respect
to inversion. We first recall that the local inverse function theorem
holds for GSF, see \cite{GK18}. Therefore, if $f\in\gsft(U,\rti)\subseteq\gsf(U,\rti)$
and at the point $x_{0}\in U$ the derivative $f'(x_{0})$ is invertible,
we can find open neighborhoods of $x_{0}\in X\subseteq U$ and of
$y_{0}:=f(x_{0})\in Y$ such that $f|_{X}:X\to Y$ is invertible,
$\left(f|_{X}\right)^{-1}\in\gsf(Y,X)$ and $f'(x)$ is invertible
for all $x\in X$.
\begin{thm}
If $\sigma\le\rho^{*}$ and we use notations and assumptions introduced
above, then $\left(f|_{X}\right)^{-1}\in\gsft(Y,X)$.
\end{thm}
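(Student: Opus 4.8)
The plan is to \emph{reduce everything to the characterization of GRAF by factorial growth of derivatives}, Thm.~\ref{thm:growthDers}, applied to the local inverse. Since being a GRAF is a local property (Def.~\ref{def:GRAF}), and since $f'(x)$ is invertible for every $x\in X$, it suffices to fix an arbitrary $y_{1}\in Y$, set $x_{1}:=\left(f|_{X}\right)^{-1}(y_{1})\in X$, and produce $s,C,R\in\rti_{>0}$ such that the derivative net of $g:=\left(f|_{X}\right)^{-1}$ obeys $\left|g_{\eps}^{(n)}(y_{\eps})\right|\le C_{\eps}\,n!/R_{\eps}^{n}$ for all $[y_{\eps}]\in B_{s}(y_{1})$, all $n\in\N$ and $\eps$ small; then the sufficiency direction of Thm.~\ref{thm:growthDers} gives $g\in\gsft$ near $y_{1}$, and $y_{1}$ being arbitrary yields $g\in\gsft(Y,X)$. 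Here $g$ is defined, as a GSF, by the net of classical inverses $g_{\eps}:=\left(f_{\eps}\right)^{-1}$ coming from the GSF inverse function theorem of \cite{GK18}. Note that the tempting shortcut of writing $g'=1/(f'\circ g)$ and invoking closure of GRAF under reciprocal and composition (Thm.~\ref{thm:algProperties}.\ref{enu:division}, Thm.~\ref{thm:composition}) is circular, since those results presuppose that $g$ is already a GRAF; this is why we route through the derivative-growth characterization instead.

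First I would fix the representative. By Thm.~\ref{thm:GRAFareGSF} and the necessity direction of Thm.~\ref{thm:growthDers} applied at $x_{1}\in U$, the net $(f_{\eps})$ may be assumed to consist of \emph{real analytic} functions satisfying, on a ball $B_{\bar s}(x_{1})$, the bound $\left|f_{\eps}^{(n)}(x_{\eps})\right|\le \hat C_{\eps}\,n!/\hat R_{\eps}^{\,n}$ with $\hat C=[\hat C_{\eps}]$, $\hat R=[\hat R_{\eps}]\in\rti_{>0}$; indeed, factorial growth of the derivatives forces each $f_{\eps}$ to be genuinely analytic with classical radius $\ge\hat R_{\eps}$, so that $g_{\eps}=\left(f_{\eps}\right)^{-1}$ is classically analytic. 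Moreover, invertibility of $f'(x_{1})$ in $\rti$ means $\left|f'_{\eps}(x_{1\eps})\right|\ge\rho_{\eps}^{N}$ for some $N\in\N$ and $\eps$ small (see \cite{MTAG}); by sharp continuity of $f'$ we may shrink $\bar s$ so that $\left|f'_{\eps}(x_{\eps})\right|\ge\tfrac12\rho_{\eps}^{N}=:\delta_{\eps}$ throughout $B_{\bar s}(x_{1})$, whence $1/\delta$ is $\rho$-moderate.

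The heart of the argument is the \emph{quantitative, $\eps$-wise analytic inverse function estimate}: for a classical real analytic $f_{\eps}$ with $\left|f_{\eps}^{(n)}(x_{1\eps})/n!\right|\le \hat C_{\eps}/\hat R_{\eps}^{\,n}$ and $\left|f'_{\eps}(x_{1\eps})\right|\ge\delta_{\eps}$, the method of majorants (see e.g.~\cite{KrPa02}) produces an explicitly solvable majorant equation whose solution bounds the Taylor coefficients of $g_{\eps}$ at $y_{1\eps}=f_{\eps}(x_{1\eps})$ by $\left|g_{\eps}^{(n)}(y_{1\eps})/n!\right|\le C'_{\eps}/(R'_{\eps})^{n}$, where $C'_{\eps}$ and $1/R'_{\eps}$ are given by \emph{explicit algebraic expressions} in $\hat C_{\eps}$, $1/\hat R_{\eps}$ and $1/\delta_{\eps}$. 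The key point for our setting is that these expressions are built from ring operations on $\rho$-moderate nets, hence $C':=[C'_{\eps}]$ and $1/R':=[1/R'_{\eps}]$ are again elements of $\rti_{>0}$. Spreading this bound from the center $y_{1}$ to a whole sharp ball $B_{s}(y_{1})$ (after a further harmless shrinking, exactly as in the center-to-ball step of the proof of Thm.~\ref{thm:growthDers} and Lem.~\ref{lem:eventBoundNear_c}) yields \eqref{eq:growthDers} for $g$, and Thm.~\ref{thm:growthDers} closes the argument.

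The main obstacle is precisely the \textbf{control of moderateness through the classical inversion estimate}: \emph{a priori} neither $1/\hat R$ nor $1/\delta$ need be finite, so one cannot appeal to the finite-radius Colombeau real analytic theory of \cite{PiScVa09}; one must verify that the majorant construction depends on $(\hat C,\,1/\hat R,\,1/\delta)$ only through operations preserving $\rho$-moderateness and never divides by a quantity that could fail to be invertible in $\rti$ (the only division being by $\delta$, which is invertible by construction). A secondary, routine check is that the neighborhoods $X$, $Y$ from \cite{GK18} can be taken sharply open and that the center-to-ball spreading step is uniform in $\eps$; both follow the template already established in the proof of Thm.~\ref{thm:growthDers}.
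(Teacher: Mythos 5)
Your proposal is correct, and its skeleton---reduce to the factorial-growth characterization Thm.~\ref{thm:growthDers} and feed it with the classical majorant/inversion estimates of \cite{KrPa02} applied $\eps$-wise, checking moderateness of the resulting constants---is exactly the skeleton of the paper's proof. The one real difference is \emph{where the division by $f'$ is handled}, and here your circularity objection misses the mark. The paper does not compose two GRAF: it sets $h:=1/f'$, notes $g'=h\circ g$, and shows $h$ is a GRAF using only Cor.~\ref{cor:derGRAF} (so $f'$ is a GRAF) and Thm.~\ref{thm:algProperties}.\ref{enu:division} (reciprocal of a HPS with invertible constant term); nothing about $g$ is presupposed, so there is no circle. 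The necessity direction of Thm.~\ref{thm:growthDers} then gives $\left|h_{\eps}^{(j)}(x_{\eps})\right|\le C_{\eps}\,j!/R_{\eps}^{j}$ with $C$, $R\in\rti_{>0}$, and the composition $h\circ g$ is never fed to Thm.~\ref{thm:composition}: it is absorbed by the classical formula (1.15) of \cite[Thm.~1.5.3]{KrPa02} for the ODE $g'=h\circ g$, which bounds $\left|g_{\eps}^{(j)}(y_{\eps})\right|$ by $j!\,(-1)^{j-1}{1/2 \choose j}(2C_{\eps})^{j}/R_{\eps}^{j-1}$---an expression manifestly built by ring operations from the moderate quantities $C$ and $1/R$, so the moderateness check you single out as the ``main obstacle'' becomes automatic. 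Your variant, which applies the necessity direction to $f$ itself and threads the lower bound $\delta\le\left|f'\right|$ through the majorant construction, is workable (the Catalan-type combinatorial factors grow only geometrically, so $C'$ and $1/R'$ stay moderate, and the only division is by the invertible $\delta$), but it redoes by hand---deriving reciprocal bounds from $(\hat C,1/\hat R,1/\delta)$---precisely what the paper gets for free from Thm.~\ref{thm:algProperties} and Thm.~\ref{thm:growthDers}. In both versions the remaining technical steps (passing to real analytic representatives, the center-to-ball spreading, and sharp openness of the relevant neighborhoods) are the same and are justified as in the proof of Thm.~\ref{thm:growthDers}.
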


\begin{proof}
For simplicity, set $g:=\left(f|_{X}\right)^{-1}$ and $h(x):=\frac{1}{f'(x)}$
for all $x\in X$, so that $g'(y)=h[g(y)]$ for all $y\in Y$. From
Cor.~\ref{cor:derGRAF} and Thm.~\ref{thm:algProperties}, we know
that $h$ is a GRAF. Therefore, Thm.~\ref{thm:growthDers} yields
$\forall^{0}\eps\,\forall n\in\N:\ \left|h_{\eps}^{(j)}(x_{\eps})\right|\le C_{\eps}\frac{j!}{R_{\eps}^{j}}$
for all $[x_{\eps}]\in B_{s}(x_{0})$ and suitable constants $s$,
$C$, $R\in\rti_{>0}$. For $[y_{\eps}]\in f\left(B_{s}(x_{0})\right)$
(note that this is an open neighborhood of $y_{0}$ because $f$ is
an open map) and these $\eps$, formula (1.15) of \cite[Thm.1.5.3]{KrPa02}
yields $\left|g_{\eps}^{(j)}(y_{\eps})\right|\le j!(-1)^{j-1}{1/2 \choose j}\frac{(2C_{\eps})^{j}}{R_{\eps}^{j-1}}$
for all $j\in\N_{>0}$, and hence $g\in\gsft(U,\rcrho)$ once again
by Thm.~\ref{thm:growthDers}.
\end{proof}
Since $\delta$ is a GRAF, in general the identity principle does
not hold for GRAF. From our point of view this is a feature of GRAF
because it allows to include as GRAF a large class of interesting
generalized functions and hence pave the way to a more general related
Cauchy-Kowalevski theorem. The following theorem clearly shows that
the identity principle does not hold in our framework exactly because
we are in a non-Archimedean setting: every interval is not connected
in the sharp topology because the set of all the infinitesimals is
a clopen set, see e.g.~\cite{GK13b}.
\begin{thm}
\label{thm:identityClopen}Let $U\subseteq\rti$ be an open set and
$f$, $g\in\gsft(U,\rti)$. Then the set
\[
\mathcal{O}:=\text{\emph{int}}\left\{ x\in U\mid f(x)=g(x)\right\} 
\]
is clopen in the sharp topology.
\end{thm}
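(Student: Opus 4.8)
The plan is to reduce everything to a single GRAF and then to reinterpret $\mathcal{O}$ as the intersection of the zero sets of all its derivatives. First I set $h:=f-g$, which is again a GRAF on $U$, i.e.\ $h\in\gsft(U,\rti)$: at each $c\in U$ both $f$ and $g$ are given, on a common neighborhood $(c-s,c+s)$, by convergent HPS centered at $c$, and by Thm.~\ref{thm:algProperties}.\ref{enu:sum_conv} (together with scalar multiplication by $-1$, Thm.~\ref{thm:algProperties}.\ref{enu:prodScalar}) their difference is the convergent HPS with coefficients $a_n-b_n$. Since $\mathcal{O}=\text{int}\{x\in U\mid h(x)=0\}$ is an interior, it is \emph{open} by definition; the whole content is therefore to prove that $\mathcal{O}$ is \emph{closed} in $U$.

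Next I would establish the key reformulation
\[
\mathcal{O}=\bigcap_{n\in\N}Z_n,\qquad Z_n:=\{c\in U\mid h^{(n)}(c)=0\}.
\]
The inclusion $\subseteq$ is immediate: if $c\in\mathcal{O}$, then $h$ vanishes identically on a sharp neighborhood of $c$, and since $h$ is a GSF (hence smooth) all its derivatives vanish at $c$. The reverse inclusion $\supseteq$ is the crux of the argument and is exactly where the hyperseries machinery enters: if $h^{(n)}(c)=0$ for all $n\in\N$, then by Def.~\ref{def:GRAF} together with the uniqueness of the Taylor coefficients (Thm.~\ref{thm:uniqueness}, which gives $a_n=\frac{h^{(n)}(c)}{n!}$) there is $s>0$ with $h(x)=\hypersum{\rho}{\sigma}\frac{h^{(n)}(c)}{n!}(x-c)^n=0$ for all $x\in(c-s,c+s)$; hence $c$ is an interior point of $\{h=0\}$, i.e.\ $c\in\mathcal{O}$.

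Finally I would close the proof by a continuity argument. By iterating Cor.~\ref{cor:derGRAF}, each derivative $h^{(n)}$ is again a GRAF, in particular a GSF $h^{(n)}\in\gsf(U,\rti)$, and GSF are continuous for the sharp topology (see \cite{MTAG}). Moreover $\{0\}$ is sharply closed in $\rti$, since its sharp closure consists of those $y$ with $|y|<\diff\rho^q$ for every $q\in\N$, which forces $y=0$. Consequently each $Z_n=(h^{(n)})^{-1}(\{0\})$ is closed in $U$, and so is their intersection $\mathcal{O}$; being also open, $\mathcal{O}$ is clopen. I expect the only genuinely delicate step to be the inclusion $\supseteq$ above, i.e.\ deducing from the vanishing of all derivatives the vanishing of $h$ on an \emph{entire} sharp neighborhood — this is precisely the non-classical input furnished by the local HPS representation of a GRAF. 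I would also stress, to forestall any apparent clash with the failure of the identity principle, that clopenness does \emph{not} force $\mathcal{O}=U$: in the sharp topology an interval is disconnected (the infinitesimals form a clopen subset), so a proper clopen coincidence set is perfectly possible.
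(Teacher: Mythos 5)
Your proposal is correct and follows essentially the same route as the paper's proof: both reduce to $h=f-g$, note that openness is automatic, and derive closedness from the sharp continuity of the derivatives $h^{(n)}$ combined with the local HPS representation $h(x)=\hypersum{\rho}{\sigma}\frac{h^{(n)}(c)}{n!}\left(x-c\right)^{n}$ (via Thm.~\ref{thm:uniqueness}), which forces $h\equiv0$ on a sharp neighborhood of any point where all derivatives vanish. The only difference is cosmetic: you package the argument as the identity $\mathcal{O}=\bigcap_{n\in\N}\{x\in U\mid h^{(n)}(x)=0\}$, an intersection of sharply closed preimages, whereas the paper runs the equivalent direct limit argument $f^{(n)}(c)=\lim_{r\to0^{+}}f^{(n)}(\bar{c}_{r})=0$ at a closure point $c$.
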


\begin{proof}
For simplicity, considering $f-g$, without loss of generality we
can assume $g=0$. We only have to show that $\mathcal{O}$ is closed
in $U$. Assume that $c$ is in the closure of $\mathcal{O}$ in $U$,
i.e.
\begin{equation}
c\in U,\ \forall r\in\rti_{>0}\,\exists\bar{c}\in B_{r}(c)\cap\mathcal{O}.\label{eq:closureO}
\end{equation}
We have to prove that $c\in\mathcal{O}$. We first note that for each
$\bar{c}\in\mathcal{O}$, we have $B_{p}(\bar{c})\subseteq\mathcal{O}$
for some $p\in\rti_{>0}$ and hence
\begin{equation}
f(\bar{x})=0\quad\forall\bar{x}\in B_{p}(\bar{c}).\label{eq:ident0}
\end{equation}
Now, fix $n\in\N$ in order to prove that $f^{(n)}(c)=0$. From \eqref{eq:closureO},
for all $r\in\rti_{>0}$ we can find $\bar{c}_{r}\in B_{r}(c)\cap\mathcal{O}$
such that $f^{(n)}(\bar{c}_{r})=0$ from \eqref{eq:ident0}. From
sharp continuity of $f^{(n)}$, we have $f^{(n)}(c)=\lim_{r\to0^{+}}f^{(n)}(\bar{c}_{r})=0$.
Since $f\in\gsft(U,\rti)$ and $c\in U$, we can hence find $\sigma>0$
such that $f(x)=\hypersum{\rho}{\sigma}\frac{f^{(n)}(c)}{n!}\left(x-c\right){}^{n}=0$
for all $x\in B_{\sigma}(c)$, i.e.~$c\in\mathcal{O}$.
\end{proof}
\noindent For example, if $f=\delta$ and $g=0$, the set
\[
\text{int}\left\{ x\in\rti\mid\delta(x)=0\right\} \supseteq\left\{ x\in\rti\mid|x|\gg0\right\} 
\]
is clopen. Thereby, also $\rti\setminus\text{int}\left\{ x\in\rti\mid\delta(x)=0\right\} $
is clopen, and we have
\begin{align*}
\left\{ x\in\rti\mid\delta(x)\ne0\right\}  & \subseteq\rti\setminus\text{int}\left\{ x\in\rti\mid\delta(x)=0\right\} \\
 & \subseteq\left\{ x\in\rti\mid\forall r\in\R_{>0}:\ |x|\le_{\text{s}}r\right\} .
\end{align*}
If we assume that all the derivatives of $f$ are finite and the neighborhoods
of Def.~\ref{def:GRAF} are also finite, then we can repeat the previous
proof considering only standard points $c\in\R$ and radii $r\in\R_{>0}$,
obtaining the following sufficient condition:
\begin{thm}
\noindent \label{thm:identStd}Let $U\subseteq\rti$ be an open set
such that $U\cap\R$ is connected. Let $f$, $g\in\gsft(U,\rti)$
be such that $f|_{V\cap\R}=g|_{V\cap\R}$ for some nonempty subset
$V\subseteq U$ such that $V\cap\R$ is open in the Fermat topology,
i.e.
\[
\forall x\in V\cap\R\,\exists r\in\R_{>0}:\ B_{r}(x)\subseteq V\cap\R.
\]
Finally, assume that all the following quantities are finite:
\begin{enumerate}
\item \label{enu:fg_converge}The neighborhood length $s$ in Def.~\ref{def:GRAF}
is finite for each $c\in U\cap\R$,
\item \label{enu:f_g_finite}$\forall x\in U\,\forall n\in\N:\ f^{(n)}(x)$
and $g^{(n)}(x)$ are finite.
\end{enumerate}
Then $f|_{U\cap\R}=g|_{U\cap\R}$.

\end{thm}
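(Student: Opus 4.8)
The plan is to mimic the proof of Thm.~\ref{thm:identityClopen}, but to transport the clopen argument from the sharp topology on $U$ to the Fermat topology on the set of \emph{standard} points $U\cap\R$, where connectedness is available by hypothesis. As there, I would replace $f$ by $f-g$, which is again a GRAF by Thm.~\ref{thm:algProperties}, and write $\Psi:=f-g$ with a defining net $(\Psi_\eps)$, so that the goal becomes $\Psi(c)=0$ for every standard $c\in U$. Let $\mathcal{O}:=\mathrm{int}\{x\in U\mid\Psi(x)=0\}$ be the sharp-clopen set of Thm.~\ref{thm:identityClopen}, and set $B:=\mathcal{O}\cap(U\cap\R)$. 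I would then show that $B$ is clopen in $U\cap\R$ for the Fermat topology and nonempty; connectedness of $U\cap\R$ forces $B=U\cap\R$, which is the claim.

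Fermat-openness of $B$ is where hypothesis~\ref{enu:fg_converge} enters. If $c\in B$, then $\Psi\equiv0$ on a sharp ball about $c$, so all generalized derivatives vanish, $\Psi^{(n)}(c)=0$ for every $n$; since $\Psi$ is a GRAF, Def.~\ref{def:GRAF} together with the finiteness of the neighbourhood length gives a finite, non-infinitesimal $s$ with $\Psi(x)=\hypersum{\rho}{\sigma}\frac{\Psi^{(n)}(c)}{n!}(x-c)^{n}=0$ on $B_{s}(c)$. Hence $B_{s}(c)\subseteq\mathcal{O}$ with $s$ appreciable, and the standard points of $B_{s}(c)$ constitute a Fermat-neighbourhood of $c$ inside $B$.

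The heart of the matter is Fermat-closedness, and this is where the argument departs most from the sharp case: sharp continuity of $\Psi^{(n)}$ is useless along standard nets $\bar{c}_r\to c$ (indeed $\delta^{(n)}$ vanishes at every standard $\bar{c}_r\neq0$ but not at $0$, which is precisely why hypothesis~\ref{enu:f_g_finite} must forbid infinite derivatives). Instead I would argue quantitatively by complex analysis. Let $c$ be a Fermat-limit of $B$ and choose $\bar{c}_r\in B$ with $|\bar{c}_r-c|<r$ for standard $r\downarrow0$. By~\ref{enu:fg_converge}--\ref{enu:f_g_finite} and Thm.~\ref{thm:growthDers}, each $\Psi_\eps$ is real-analytic with a common appreciable radius $s_0$ about $c$, hence extends holomorphically to the disc $D(0,s_0)$ (recentring at $c$) with a \emph{moderate} bound $|\Psi_\eps|\le M_\eps\le\rho_\eps^{-N}$ there, obtained from convergence via Lem.~\ref{lem:kboundhypersum}. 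Since $\bar{c}_r\in B$ means $\Psi\equiv0$ on a sharp ball about $\bar{c}_r$, equality of GSF yields \emph{uniform} negligibility on an appreciable real segment $J_r\subseteq(-s_0,s_0)$, namely $|\Psi_\eps(t)|\le\rho_\eps^{q}$ for all $t\in J_r$ and all small $\eps$. The two-constants theorem (harmonic-measure estimate) then supplies, on any compact disc $\overline{D(0,r')}\subseteq D(0,s_0)$, a standard exponent $\omega=\omega(r',J_r,s_0)>0$ with $\sup_{|z|\le r'}|\Psi_\eps|\le M_\eps^{1-\omega}\rho_\eps^{q\omega}\le\rho_\eps^{q\omega-N(1-\omega)}$, which is negligible as $q\to+\infty$. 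Thus $\Psi_\eps$ is uniformly negligible on $(-r',r')$, i.e.\ $\Psi\equiv0$ on the sharp ball $B_{r'}(c)$, so $c\in\mathcal{O}$ and $c\in B$.

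Finally, nonemptiness follows from the identical estimate applied at a point $c_0\in V\cap\R$: as $V\cap\R$ is Fermat-open and $f=g$ there, $\Psi$ is uniformly negligible on an appreciable segment about $c_0$, and the two-constants step yields $\Psi\equiv0$ on a sharp ball $B_{r'}(c_0)$, whence $c_0\in B$. I expect the main obstacle to be exactly this passage from smallness on a real segment to smallness on a full complex disc: the naive substitutes (mean value theorem, or Cauchy estimates from the moderate disc bound) only give $|\Psi_\eps|\to0$ rather than negligibility in $\rti$, and it is the two-constants theorem---made applicable by the uniform holomorphic extension with moderate bounds that the finiteness hypotheses~\ref{enu:fg_converge}--\ref{enu:f_g_finite} guarantee on an \emph{appreciable} disc---that bridges the gap and, at the same time, clarifies why genuinely infinite objects such as the Dirac delta are ruled out.
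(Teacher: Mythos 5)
Your skeleton (a clopen set plus connectedness of $U\cap\R$) is the same as the paper's, and your Fermat-openness step is correct --- indeed slightly cleaner than the paper's, because defining $B$ through the \emph{sharp} interior $\mathcal{O}$ makes the derivatives of $\Psi:=f-g$ vanish \emph{exactly} at each point of $B$, after which hypothesis~\ref{enu:fg_converge} gives an appreciable ball of exact vanishing. The two places where you leave the paper's route, however, both contain genuine gaps. For Fermat-closedness you need representatives of $\Psi$ holomorphic with \emph{moderate} bounds on a complex region of \emph{appreciable} size, large enough to join the segment $J_r$ around $\bar{c}_r$ to the point $c$ (which lies at standard, non-infinitesimal distance from $\bar{c}_r$); neither cited tool provides this. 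Thm.~\ref{thm:growthDers} gives $|\Psi_\eps^{(n)}(x_\eps)|\le C_\eps n!/R_\eps^{n}$, but the paper warns explicitly that $1/R$ need not be finite: if $R$ is infinitesimal, the disc carrying a moderately bounded holomorphic extension is infinitesimal and never reaches $c$, and no standard two-constants exponent $\omega$ is available. Lem.~\ref{lem:kboundhypersum} gives $|a_n(x-c)^n|<K$ only for $n\in\hypNs$ as an inequality of generalized numbers; converting it into the uniform statement $\forall^{0}\eps\,\forall n\in\N:\ |a_{n\eps}(x_\eps-c_\eps)^n|\le K_\eps$ is exactly the ``$\eps_{0n}$ depends on $n$'' problem the paper discusses right after that lemma, and is the reason eventual $\rti$-boundedness (Def.~\ref{def:eventInf}, Lem.~\ref{lem:KstrongBound}) is introduced as a \emph{separate} hypothesis: it is automatic only on the infinitesimal ball of Lem.~\ref{lem:eventBoundNear_c}, and you have not shown that hypotheses \ref{enu:fg_converge}--\ref{enu:f_g_finite} imply it on an appreciable ball. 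Without it, $M_\eps=\sup_{|z-c_\eps|\le s_0}\left|\sum_n a_{n\eps}(z-c_\eps)^n\right|$ is not known to be moderate, and your key estimate has no starting point.

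Your nonemptiness step is moreover circular. At $c_0\in V\cap\R$ you know $f=g$ only at \emph{standard} points; this gives negligibility of $\left(\Psi_\eps(t)\right)_\eps$ for each standard $t$ separately, with $\eps$-thresholds depending on $t$, not the uniform negligibility $\sup_{t\in J_r}|\Psi_\eps(t)|\le\rho_\eps^{q}$ for $\eps$ small that the two-constants theorem needs: a GSF can vanish at every standard point of a segment without being uniformly negligible there (e.g.\ $\Psi_\eps(x)=\rho_\eps\,\phi\bigl((x-\sqrt{\rho_\eps})/\rho_\eps\bigr)$ with $\phi$ a bump function). Passing from standard-point vanishing to sharp vanishing is precisely what the theorem must establish, so it cannot be presupposed at the base point. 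The paper dissolves both difficulties with a different choice of clopen set: $\mathcal{O}:=\text{int}_{\text{F}}\left\{ x\in U\cap\R\mid f(x)=g(x)\right\}$, the Fermat-interior of the \emph{standard} zero set, so $V\cap\R\subseteq\mathcal{O}$ trivially; and for closedness it uses the observation you dismissed too quickly: hypothesis~\ref{enu:f_g_finite} implies that every $f^{(n)}$ is continuous for the \emph{Fermat} topology (by \cite{GKV}), so the argument of Thm.~\ref{thm:identityClopen} runs verbatim with standard radii $r\in\R_{>0}$ --- your Dirac-delta example only shows that \emph{sharp} continuity is useless, and $\delta$ violates \ref{enu:f_g_finite} anyway. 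If you wish to keep the complex-analytic route, the lemma you must first prove is that \ref{enu:fg_converge}--\ref{enu:f_g_finite} force eventual $\rti$-boundedness (equivalently $|a_{n\eps}|\le C_\eps/R_\eps^{n}$ with $R$ appreciable and $C$ moderate) on an appreciable ball around each standard point; nothing in the paper or in your argument establishes this.
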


\begin{proof}
The proof proceeds exactly as in Thm.~\ref{thm:identityClopen} but
considering
\[
\mathcal{O}:=\text{int}_{\text{F}}\left\{ x\in U\cap\R\mid f(x)=g(x)\right\} ,
\]
where $\text{int}_{\text{F}}$ is the interior in the Fermat topology
(i.e.~the topology generated by the balls $B_{r}(c)$ for $c\in\rti$
and $r\in\R_{>0}$, see \cite{GK13b}). We have to note that assumption
\ref{enu:f_g_finite} implies that all $f^{(n)}$ are continuous in
this topology (see \cite{GKV}).
\end{proof}
\noindent For example, if $f\in\mathcal{C}^{\omega}(\R)$ is an ordinary
real analytic function and $K$, $h\in\rti$ are finite numbers, the
GRAF $x\in\text{int}(\csp{\rti})\mapsto Kf(hx)\in\rti$, where $\csp{\rti}$
is the set of compactly supported points, satisfies the assumptions
of the last theorem.

\section{Conclusions}

Sometimes, e.g.~in the study of PDE, the class of real analytic functions
is described as a too rigid set of solutions. In spite of their good
properties with respect to algebraic operations, composition, differentiation,
integration, inversion, etc., this rigidity is essentially well represented
by the identity principle that necessarily excludes e.g.~solitons
with compact support or interesting generalized functions. Thanks
to Thm.~\ref{thm:identityClopen}, we can state that this rigidity
is due to the banishing of non-Archimedean numbers from mathematical
analysis. The use of hyperseries allows one to recover all these features
including also interesting non trivial generalized functions and compactly
supported functions. This paves the way for an interesting generalization
of the Cauchy-Kowalevski theorem for GRAF that we intend to develop
in a subsequent work. Its proof can be approached by trying a generalization
of the classical method of majorants, or using the Picard-Lindelöf
theorem for PDE with GSF and then using characterization Thm.~\ref{thm:growthDers}
to show that the GSF solution is actually a GRAF.

\end{document}